     \def\section{\@startsection{section}{1}%
     \z@{.7\linespacing\@plus\linespacing}{.5\linespacing}%
     {\bfseries
     \centering
     }}
     \def\@secnumfont{\bfseries}
\pgfplotsset{%
    compat=1.8,
    compat/show suggested version=false,
}
\tikzset{shiny ball/.style={
  fill=none, draw=none, shading=ball, shading angle=-15,
  postaction={fill=white, path fading=ball, opacity=0.75, fading angle=45},
  postaction={fill=white, path fading=ring}
}}
  \tikzstyle{mybox} = [draw=black, fill=black!20, very thick,
\tikzstyle{fancytitle} =[fill=black, text=white]
\newcommand\pgfmathsinandcos[3]{%
  \pgfmathsetmacro#1{sin(#3)}%
  \pgfmathsetmacro#2{cos(#3)}%
}
\newcommand\LatitudePlane[3][current plane]{%
  \pgfmathsinandcos\sinEl\cosEl{#2} 
  \pgfmathsinandcos\sint\cost{#3} 
  \pgfmathsetmacro\yshift{\cosEl*\sint}
  \tikzset{#1/.estyle={cm={\cost,0,0,\cost*\sinEl,(0,\yshift)}}} %
}
\newcommand\DrawLatitudeCircle[2][1]{
  \LatitudePlane{\angEl}{#2}
  \tikzset{current plane/.prefix style={scale=#1}}
  \pgfmathsetmacro\sinVis{sin(#2)/cos(#2)*sin(\angEl)/cos(\angEl)}
  \pgfmathsetmacro\angVis{asin(min(1,max(\sinVis,-1)))}
  \draw[current plane] (\angVis:1) arc (\angVis:-\angVis-180:1);
  \draw[current plane,dashed] (180-\angVis:1) arc (180-\angVis:\angVis:1);
}
\tikzset{%
  >=latex, 
  inner sep=0pt,%
  outer sep=2pt,%
  mark coordinate/.style={inner sep=0pt,outer sep=0pt,minimum size=3pt,
    fill=black,circle}%
}
\newtheorem{theorem}{Theorem}[section]
\newtheorem{lemma}{Lemma}[section]
\newtheorem{prop}{Proposition}[section]
\theoremstyle{definition}
\numberwithin{equation}{section}
\newcommand{\End}{ {\rm End}}
\newcommand{\mbr}{{\mathbb R}}
\newcommand{\cl}{{\mathcal L}}
\newcommand{\ovs}{\overline{\sigma}}
\newcommand{\ovm}{\overline{\mu}}
\newcommand{\la}{{\langle}}
\newcommand{\ra}{{\rangle}}
\newcommand{\norm}[1]{\left\lVert#1\right\rVert}
\begin{document}

\title[The Gaussian Limit for Spherical Means]{The Gaussian Limit for High-Dimensional  Spherical Means}



\author{Amy Peterson}
\address{Department of Mathematics \\
  University  of Connecticut\\
Storrs, CT 062569 \\
e-mail: \sl amy.peterson@uconn.edu}
 
\author{Ambar N.~Sengupta}
\address{Department of Mathematics \\
  University  of Connecticut\\
Storrs, CT 062569\\
e-mail: \sl ambarnsg@gmail.com}
\thanks{Research   supported by NSA grant H98230-15-1-0254}

\subjclass[2010]{Primary 44A12, Secondary 28C20, 60H40}

\date{9 May 2018}

\dedicatory{}

\begin{abstract} We show that the limit of integrals along slices of a high dimensional sphere is a Gaussian integral on a corresponding finite-codimension affine subspace in infinite dimensions. 
\end{abstract}

\maketitle
 
\section{Introduction}\label{S:Intro}

 The Radon transform \cite{ Radon17} of a function $f$ on $\mbr^N$ associates to each affine subspace $L$ in $\mbr^N$ the (Lebesgue) integral of $f$ over $L$. In the infinite-dimensional setting, the Gaussian Radon transform \cite{HolSenGR2012} of a function $\phi$ on a Banach space $B$ associates to each closed affine subspace $L$ a Gaussian integral of $f$ over $L$. Returning to the finite-dimensional setting again, we can associate to a function $f$, defined on an affine subspace $L$ of $\mbr^N$, the integral of $f$ over the `circle' in which $L$ intersects the sphere $S^{N-1}(\sqrt{N})$ of radius $\sqrt{N}$ in $\mbr^N$. Our goal in this paper is to show that these spherical integrals yield the infinite-dimensional Gaussian Radon transform in the large-$N$ limit,   when $L$ is of finite codimension.  The  case of hyperplanes was established in \cite{SenGRL2016}. More background is provided below in subsection \ref{ss:rl}. The present paper may be viewed as the third in a series, following \cite{HolSenGR2012}  and  \cite{SenGRL2016}, but can be read independently of the earlier papers.

 \begin{figure}
\tikzset{
    partial ellipse/.style args={#1:#2:#3}{
        insert path={+ (#1:#3) arc (#1:#2:#3)}
 }}

 {
\centering

   \begin{tikzpicture} 
[scale=.8,   rotate= 40]   
\def\R{3} 
\def\angEl{40} 
\filldraw[ball color=white] (0,0) circle (\R);
 \foreach \t in {-80,-40,...,80} { \DrawLatitudeCircle[\R]{\t} }

\coordinate [label=left:\textcolor{black}{${}$}](G) at (-6,.5);

\coordinate [label=left:\textcolor{black}{${}$}] (C) at (5,.5);

\coordinate (F) at (4,2.5);

\coordinate (H) at (-5 ,2.5);


 \coordinate [label=left:\textcolor{black}{$L_N$}] (L) at (3.5, 2.25);
 \coordinate [label=left:\textcolor{black}{$S^{N-1}(\sqrt{N})$}] (X) at 
 (3.5, -2);

 \draw (G)-- (H);
 
 \draw (C)--(F);


\shadedraw [color=black, opacity = .3]  (-7,-.25)--(6,-.25)--(4.5,3)--(-5.5,3)--(-7,-.25);

 \draw[->, shorten <=1pt,shorten >=1pt](0,0)--(0, 1.5);
 
\draw[black,fill=black] (0,0) circle (.25ex); 
 
 \draw [dashed] (0,1.5) ellipse (2.5  and 1);
 
 \draw[thick] (0,1.5) [partial ellipse=150:370: 2.5 and 1];

\end{tikzpicture}

}
\caption{The plane $L_N$ slicing the sphere $S^{N-1}(\sqrt{N})$}
\label{F:slice}
\end{figure}

 \subsection{Summary description of results} Let  $l^2$ be the subspace of the space $\mbr^\infty$ of all real sequences   $(x_n)_{n\geq 1}$ for which the standard $l^2$ norm $\bigl (\sum_n x_n^2\bigr)^{1/2}$ is finite.  Let $L$ be an affine subspace of $l^2$ of finite codimension, and $L_N$  the subspace of $\mbr^N$ consisting of all points $(x_1,\ldots, x_N)$ such that $(x_1,\ldots, x_N,0,0,\ldots)\in L$. 
 Then  the affine subspace $L_N$  intersects the sphere $S^{N-1}(\sqrt{N})$, centered at $0$ and having radius $\sqrt{N}$, in a `circle' $S_{L_N}$. Let  $  \sigma$ denote the standard surface measure on any sphere, and let  $  \ovs$   denote the measure $\sigma$ normalized to have unit total mass.

Let $f$ be a bounded Borel function on $\mbr^k$, so that we have a corresponding function on $\mbr^N$, with $N>k$, whose value at any $(x_1,\ldots, x_N)$ is $f(x_1,\ldots, x_k)$.
 
 We prove (Theorem \ref{T:RadnonNlim})  the following limiting formula:
\begin{equation}\label{E:limGR}
\lim_{N\to\infty}\int_{S_{L_N}} f(x_1,\ldots, x_k)\, d\ovs(x)=\int_{\mbr^\infty}f(x_1,\ldots, x_k)\,d\ovm_{L}(x)
\end{equation}
where the integration on the right in (\ref{E:limGR}) is with respect to a probability measure $\ovm_L$ over $\mbr^\infty$, the space of all sequences $(x_n)_{n\geq 1}$ of real numbers.   
The   probability measure $\ovm_{L}$ on $\mbr^\infty$  is uniquely specified by the characteristic function
 \begin{equation}\label{E:cfhyperpl1}
\int_{\mbr^\infty}\exp\left(i\sum_{j=1}^\infty t_jx_j\right)\,d\ovm_{L}(x)=\mbox{exp}\left({i  \la t, p_L\ra_{l^2} -\frac{|P_{L}t|_{l^2}^2}{2}}\right),
\end{equation}
for all $t\in\mbr^{\infty}$ for which all but finitely many components are $0$, the point $p_L\in l^2$ is the point on $L$ closest to the origin, and $P_L$ is the orthogonal projection operator in $l^2$ onto the subspace $L-p_L$. Our main result, Theorem \ref{T:mainAWS},  is a formulation of the limit formula (\ref{E:limGR})   in Banach spaces.
   
\subsection{Related literarure}\label{ss:rl}  The immediate predecessor for our work is \cite{SenGRL2016}, where the corresponding result was proved for the case where $L$ has codimension $1$.     

The connection  between Gaussian measure and the uniform measure on high dimensional spheres appeared originally in the works of Maxwell  \cite{MaxGas1860}  and Boltzmann  \cite[pages 549-553]{BoltzStud1868}. Later works included Wiener's paper \cite{WienDS1923} on ``differential space'', L\'evy \cite{LevyPLAF1922}, McKean \cite{McKGDS1973}, and Hida \cite{HidaStat1970}.   

Measures  on infinite-dimensional manifolds have been studied in many works, such as Skorohod \cite{SkorSurf1970}, Uglanov \cite{UglaSurf2000}, da Prato et al. \cite{DaPraSurf2014, DaPraSurf2017}, Feyel et al. \cite{FeyelHMW1992, FeyelHG2001}, Kuo et al. \cites{Kuo1972, KuoSurf2010}, and Weitsman \cite{Weits2008, Weits2010} in the context of quantum field theory. For  the theory of Gaussian measures in infinite dimensions we refer to the monographs of Bogachev \cite{BogGM1998} and Kuo \cite{KuoGB1975}.  Hertle \cite{HertSurf1979, HertGa1982}  defined surface measures on spheres and hyperplanes in infinite dimensions by a   method different from the one we use and studied the Radon transform using these surface measures.

The approach to Gaussian measures on hyperplanes, and more generally affine subspaces, in infinite dimensions that we use was initiated in  \cite{MiSen2007}, where such measures where defined  for hyperplanes in Hilbert spaces and an inversion formula obtained for the Gaussian Radon transform. In \cite{BecSenSup2012} it was shown that if a suitably well-behaved function, on an infinite-dimensional Hilbert space, has zero Gaussian integrals on hyperplanes not intersecting a closed, bounded, convex set then the function is supported within this set; this is an infinite-dimensional counterpart of Helgason's support theorem in Radon transform theory  \cite{Helg1999}. A support theorem in the setting of white noise analysis was proved by Becnel \cite{BecSup2015} using a different strategy.   The Gaussian Radon transform was developed for Banach spaces in \cite{HolSenGR2012}, where again a support theorem was established.    Bogachev and of Lukintsova \cites{BogLukRT2012, LukRad2013} studied the Radon transform of more general Radon measures in infinite dimensions and established results on the support behavior of the Radon transform.  

There is a vast literature on the subject of finite-dimensional Radon transforms; we refer to Rubin \cite{Rubin2004, Rubin2015} for Radon transforms on Grassmannians and for a broader overview of the subject.

\section{The Gaussian Limit of Spherical Radon Transforms}\label{s:splim}

In this section $L$ denotes an affine subspace  in $l^2$ of finite codimension. Thus $L$ is of the form
\begin{equation}\label{E:splimL}
L=\{v\in l^2: \la v,w_1\ra=p_1,\ldots, \la v, w_m\ra=p_m\},
\end{equation}
where  $w_1,\ldots, w_m\in l^2$ are orthonormal and $p_1,\ldots, p_m\in\mbr$.

Let us view $\mbr^N$ as a subspace of $l^2$ by identifying any vector $z\in\mbr^N$ with the vector
$$(z, 0,0,0,\ldots)\in l^2.$$
Let
\begin{equation}\label{E:LN}
L_N=L\cap \mbr^N.
\end{equation}
The definition (\ref{E:splimL})  of $L$ shows that a vector $z\in\mbr^N$ lies in $L_N$ if and only if 
\begin{equation}\label{E:zwi}
\la z, (w_1)_{(N)}\ra=p_1,\ldots, \la z, (w_m)_{(N)}\ra=p_m,
\end{equation}
where, for any sequence $x=(x_1, x_2,\ldots)$,
$$x_{(N)}=(x_1,\ldots, x_N).$$
We work with $N$ large enough, larger than some $N_0$, so that $(w_1)_{(N)},\ldots,  (w_m)_{(N)}$ are linearly independent vectors; this ensures also that $L_N\neq\emptyset$ when $N\geq N_0$.
If $z, z'\in L_N$ then $z-z'$ is orthogonal to the vectors $(w_1)_{(N)}, \ldots, (w_m)_{(N)}$. 
On the other hand if $z'\in L_N$ and if $z\in\mbr^N$ is such that $z-z'$ is orthogonal to $(w_1)_{(N)}, \ldots, (w_m)_{(N)}$ then by (\ref{E:zwi}) $z\in L_N$.
Let 
$$p^0_N=\hbox{point of $L_N$ closest to $0$.}$$
Thus,
\begin{equation}\label{E:splimLN}
L_N=p^0_N+[(w_1)_{(N)},\ldots, (w_m)_{(N)}]^\perp\,
\end{equation}
where   
$$[(w_1)_{(N)},\ldots, (w_m)_{(N)}]$$
is the span of the vectors $(w_1)_{(N)},\ldots, (w_m)_{(N)}$.
Let $S_{L_N}$ be the `circle' of intersection of $L_N$ with the sphere
\begin{equation}\label{E:SNmin1}
S^{N-1}(\sqrt{N})
\end{equation}
of radius $\sqrt{N}$ and center $0$ in $\mbr^N$: 
\begin{equation}\label{E:SLN}
S_{L_N}=L_N\cap S^{N-1}(\sqrt{N}).
\end{equation} 
Identifying $\mbr^N$ with the subspace $\mbr^N\times\{0\}\subset l^2$, for each $N$,  we have $L_m\subset L_n$ if $m<n$ and so the distance of $L_n$ from $0$ is at most equal to the distance of $L_m$ from $0$:
\begin{equation}
\norm{p^0_n}\leq \norm{p^0_m}.
\end{equation}

  We  work with  $N$ larger than $N_0$ that also satisfies  
\begin{equation}\label{E:uNgrp}
 \sqrt{N}> \norm{p^0_{N_0}},
\end{equation}
which implies that 
\begin{equation}\label{E:uNgrp2}
 \sqrt{N}>\norm{p^0_N}.
\end{equation}
Thus
    ${S_{L_N}}$ is   a nonempty $(N-m)$-dimensional sphere. The radius of this sphere is
 \begin{equation}\label{E:radSLN}
 {\rm radius}(S_{L_N}) =\sqrt{N-\norm{p^0_N}^2}.
 \end{equation}

One of our goals is to prove the following result:

 \begin{theorem}\label{T:RadnonNlim} Let $L$ be a finite-codimension affine subspace in $l^2$, specified by (\ref{E:splimL}). Let $k$ be a positive integer; suppose that the image of $L$ under the coordinate projection $l^2\to\mbr^k:z\mapsto z_{(k)}=(z_1,\ldots, z_k)$ is all of $\mbr^k$.  Let $\phi$ be a bounded Borel function on $\mbr^k$. Then
 \begin{equation}\label{E:limRfS}
 \lim_{N\to\infty}\int_{S_{L_N}} \phi(x_1,\ldots, x_k)\,d\ovs(x_1,\ldots, x_N)=\int_{\mbr^\infty}\phi(z_{(k)})  \,d\mu(z),  \end{equation}
 where $\ovs$ is the standard surface area measure on $S_{L_N}$  (defined in (\ref{E:SLN})) normalized to unit total  mass, and $\mu $ is the probability measure on $\mbr^\infty$ specified by the characteristic function
  \begin{equation}\label{E:cphi}
  \begin{split}
\int_{\mbr^\infty} \exp\left({i\la t, x\ra}\right)\,d\mu (x) &=\exp\left({i\la t, z^0  \ra-\frac{1}{2}\norm{P_0t}^2}\right)\qquad   \hbox{for all $t\in\mbr^\infty_0$,}
\end{split}
\end{equation}  
where $z^0$ is the point on $L$ closest to the origin and $P_0$ is the orthogonal projection in $l^2$ onto the subspace $L-z^0$. \end{theorem}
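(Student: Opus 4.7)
The plan is to reduce (\ref{E:limRfS}) to convergence of probability measures on $\mbr^k$ via the coordinate projection $\pi:(x_1,x_2,\ldots)\mapsto(x_1,\ldots,x_k)$. Let $\tau_N$ be the pushforward of $\ovs|_{S_{L_N}}$ under $\pi$ and $\tau_\infty$ the pushforward of $\mu$ under $\pi$. Setting $t=\tilde s:=(s_1,\ldots,s_k,0,0,\ldots)\in\mbr^\infty$ in (\ref{E:cphi}) shows that $\tau_\infty$ is a Gaussian measure on $\mbr^k$ with mean $(z^0_1,\ldots,z^0_k)$ and covariance matrix $\bigl[\la P_0 e_i,P_0 e_j\ra\bigr]_{i,j=1}^k$. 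The hypothesis that $L$ projects onto $\mbr^k$ is equivalent to the linear independence of $\{P_0 e_1,\ldots,P_0 e_k\}$ in $l^2$, so $\tau_\infty$ is non-degenerate and possesses a positive continuous density $f_\infty$ on $\mbr^k$.

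For the left side, I use the decomposition $L_N=p^0_N+V_N$ from (\ref{E:splimLN}), with $V_N=[(w_1)_{(N)},\ldots,(w_m)_{(N)}]^\perp$. Every point of $S_{L_N}$ is of the form $p^0_N+y$ with $y\in V_N$ and $\norm{y}=r_N=\sqrt{N-\norm{p^0_N}^2}$. For $\tilde s\in\mbr^N$ let $\tilde s_\perp^N$ denote its orthogonal projection onto $V_N$. Since $\la\tilde s,p^0_N\ra$ is constant on $S_{L_N}$ and $\la\tilde s-\tilde s_\perp^N,y\ra=0$ for $y\in V_N$, the characteristic function of $\tau_N$ factors as
\[
\widehat{\tau_N}(s)=e^{i\la\tilde s,p^0_N\ra}\int_{\{y\in V_N:\norm{y}=r_N\}} e^{i\la\tilde s_\perp^N,y\ra}\,d\ovs(y),
\]
where the inner integral is the Fourier transform of the normalized uniform surface measure on a Euclidean sphere of dimension $N-m-1$ and radius $r_N$ inside $V_N\cong\mbr^{N-m}$.

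This inner integral depends only on $r_N\norm{\tilde s_\perp^N}$; the classical Bessel-function asymptotic (exactly the estimate used in the hyperplane case $m=1$ in \cite{SenGRL2016}) gives convergence to $\exp\bigl(-\tfrac12\norm{\tilde s_\perp^N}^2\bigr)$ whenever the sphere dimension tends to infinity with $r^2/d\to 1$; this applies here because $\norm{p^0_N}\le\norm{p^0_{N_0}}$ is bounded, so $r_N^2/(N-m)\to 1$. Pointwise convergence $\widehat{\tau_N}(s)\to\widehat{\tau_\infty}(s)$ then follows once we verify $p^0_N\to z^0$ and $\tilde s_\perp^N\to P_0\tilde s$ in $l^2$: the Gram matrix $G_N=\bigl[\la(w_i)_{(N)},(w_j)_{(N)}\ra\bigr]_{i,j=1}^m$ converges entrywise to the identity as each $w_i\in l^2$, and both limits drop out of the explicit formulas
\[
p^0_N=\sum_{i,j}(G_N^{-1})_{ij}p_i(w_j)_{(N)},\qquad \tilde s_\perp^N=\tilde s-\sum_{i,j}(G_N^{-1})_{ij}\la\tilde s,(w_i)_{(N)}\ra(w_j)_{(N)}.
\]

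The main obstacle is upgrading the resulting weak convergence $\tau_N\Rightarrow\tau_\infty$ (from L\'evy's continuity theorem, which by itself only handles bounded \emph{continuous} $\phi$) to convergence against an arbitrary bounded Borel $\phi$. The plan is to establish $L^1$-convergence of the densities on $\mbr^k$: for $N$ sufficiently large, the projection of $V_N$ onto $\mbr^k$ is surjective (by a compactness argument applied to the hypothesis on $L$), so $\tau_N$ is absolutely continuous with a smooth density $f_N$ of Maxwell--Boltzmann--Poincar\'e type. Fourier inversion together with uniform control on the Bessel asymptotics yields pointwise convergence $f_N\to f_\infty$ on $\mbr^k$, and Scheff\'e's lemma then promotes this to $L^1$-convergence, giving $\int\phi\,d\tau_N\to\int\phi\,d\tau_\infty$ for every bounded Borel $\phi$ and completing the proof.
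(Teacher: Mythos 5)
Your strategy is correct but genuinely different from the paper's. The paper never computes a characteristic function of the spherical measures: it proves a geometric disintegration formula (Theorems \ref{T:spherdisint} and \ref{T:slicedisint}) that exhibits the pushforward $\tau_N$ of $\ovs$ under $\pi_{(k)}$ as an explicit measure on $\mbr^k$ with density proportional to $\bigl(1-a_{z^{0,N}}^{-2}\norm{L_{0,N}^{-1}(x-z^{0,N}_{(k)})}^2\bigr)_+^{(N-k-m-2)/2}$, shows this converges pointwise to the Gaussian density, integrates $L^1$ test functions by dominated convergence (the bracket lies in $[0,1]$, so $|\phi|$ dominates), and finally handles bounded Borel $\phi$ by a compact truncation --- the same service your Scheff\'e step performs. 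Your route instead splits off the center $p^0_N$ and reduces $\widehat{\tau_N}$ to the radial characteristic function of the uniform measure on an $(N-m-1)$-dimensional sphere; your Gram-matrix computations of $p^0_N\to z^0$ and $\tilde s^N_\perp\to P_0\tilde s$, and your identification of the limit covariance and of its nondegeneracy with the hypothesis $\pi_{(k)}(L)=\mbr^k$, are all correct. What you gain is bypassing the paper's lengthy coordinate computation of $L_0L_0^*$, $\det L_0$, and the surface-area constants; what you pay is that the entire analytic burden is compressed into the phrase ``Fourier inversion together with uniform control on the Bessel asymptotics.'' To make that step honest you need (i) $\widehat{\tau_N}\in L^1(\mbr^k)$ for large $N$, which requires a lower bound $\norm{\tilde s^N_\perp}\geq c\norm{s}$ uniform in large $N$ (available from your Gram-matrix limit together with the linear independence of $P_0e_1,\ldots,P_0e_k$), and (ii) an $N$-independent integrable majorant for the spherical characteristic functions evaluated at $r_N\norm{\tilde s^N_\perp}$, which is a genuine estimate that you assert rather than prove --- and it is not in fact the mechanism of \cite{SenGRL2016}, which also argues through explicit densities. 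Both estimates are classical and can be supplied, so I regard your proposal as a viable alternative proof with one substantive estimate left to fill in, whereas the paper's disintegration approach makes the domination step trivial at the price of more linear algebra.
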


 On the left side of (\ref{E:limRfS}) $\phi$ is only evaluated on the image   $\pi_{(k)}(L)$, which is why the assumption that this is all of $\mbr^k$ is relevant.  
 
We will also prove a version (Theorem \ref{T:mainAWS}) of this result in the setting of Hilbert and Banach spaces.
 
 \section{Spherical Disintegration}\label{s:spherdis}
 
The key tool in the proof of Theorem  \ref{T:RadnonNlim}  will be a spherical disintegration formula that we establish in this section in Theorem \ref{T:slicedisint}.

The uniform surface measure on a sphere  can be constructed in several ways. Perhaps the most elementary way is to define it by the traditional Euclidean formula for volume of a cone:
 \begin{equation}\label{E:defsig}
 \sigma(E)=\frac{d+1}{a}\lambda(C_E),
 \end{equation}
 where $E$ is any Borel subset of a $d$-dimensional sphere of radius $a$ and $\lambda(C_E)$ is the volume of the cone $C_E$ with base $E$ and vertex the center of the sphere.  We will use $\sigma$ to denote the uniform measure, defined as above, on any sphere in any dimension.
 
The definition (\ref{E:defsig}) and the scaling property of Lebesgue measure in $\mbr^{d+1}$ leads to the scaling formula:
 \begin{equation}\label{E:spherscal}
  \int_{S^d(r)}f\,d\sigma =(r/a)^d\int_{S^d(a)} f\bigl((r/a)z\bigr)\, d\sigma(z),
 \end{equation}
 whenever either side exists, where $S^d(t)$ denotes the sphere of radius $t$ and center $0$ in $\mbr^{d+1}$. The polar disintegration formula 
 \begin{equation}\label{E:polard}
 \int_{\mbr^{d+1}}f\,d x=\int_{r\in (0,\infty)}\left[\int_{S^d(r)}f\,d\sigma\right]\,dr,
 \end{equation}
holds as well.  Elementary proofs of these formulas are in \cite{SenGRL2016}.

\subsection{Disintegration by slices} The following is sightly more general phrasing of a geometric disintegration formula from \cite{SenGRL2016}:

\begin{theorem}\label{T:spherdisint} Let $f$ be a  non-negative or bounded Borel function on the sphere $S_V(a)$ of radius $a>0$ and center $0$ in a finite-dimensional real inner-product space $V$. Let $W$ be a proper subspace of $V$. Then
\begin{equation}\label{E:spheredisint}
\int_{S_V(a)}f\,d\sigma =\int_{B_W(a)}\left[\int_{y\in S_V(a_{x})\cap W^\perp} f(x+y)\,d\sigma(y)\right]\frac{a}{a_{x}}\,dx,
\end{equation}
where 
\begin{equation}\label{E:defax}
a_w=\sqrt{a^2-\norm{w}^2},
\end{equation}
and $B_W(a)$ is the open ball of radius $a$,  and center $0$, in $W$.
 
\end{theorem}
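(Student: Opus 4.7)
My plan is a direct Jacobian computation. The geometric picture is transparent: with $V=W\oplus W^\perp$, the sphere $S_V(a)$ fibers over the open ball $B_W(a)$ via orthogonal projection onto $W$, the fiber over $x$ being the sphere $\{x\}+S_{W^\perp}(a_x)$ of radius $a_x=\sqrt{a^2-\|x\|^2}$ in the affine subspace $x+W^\perp$. The factor $a/a_x$ should appear as a Jacobian correction reflecting the fact that the sphere is ``tilted'' relative to $W$.

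Concretely, I would parametrize $S_V(a)$ (away from the $\sigma$-null subset where $\|x\|=a$) by
\begin{equation*}
\Phi\colon B_W(a)\times S_{W^\perp}(1)\to S_V(a),\qquad (x,\hat y)\mapsto x+a_x\hat y,
\end{equation*}
and compute the surface element $\Phi^*d\sigma$. Using orthonormal coordinates $e_1,\ldots,e_{d_1}$ on $W$ and a local orthonormal tangent frame $\tau_1,\ldots,\tau_{d_2-1}$ to $S_{W^\perp}(1)$ at $\hat y$ (with $d_2=\dim W^\perp$), the pushed tangent vectors are $\partial_{x_i}\Phi=e_i-(x_i/a_x)\hat y$ and $a_x\tau_\alpha$. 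Since $\hat y\perp\tau_\alpha$, the Gram matrix is block-diagonal: the $W$-block is $I+xx^\top/a_x^2$ with determinant $1+\|x\|^2/a_x^2=a^2/a_x^2$, and the angular block contributes $a_x^{2(d_2-1)}$. Hence $|\det D\Phi|=a\cdot a_x^{d_2-2}$.

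After the change of variables, I would convert the inner integral from $S_{W^\perp}(1)$ to $S_{W^\perp}(a_x)$ using the scaling formula \eqref{E:spherscal}, which contributes a factor $a_x^{-(d_2-1)}$. Multiplying $a_x^{d_2-2}$ by $a_x^{-(d_2-1)}$ gives $a_x^{-1}$, producing the claimed factor $a/a_x$ and the identity \eqref{E:spheredisint}. Extension from continuous to bounded or nonnegative Borel $f$ is routine (monotone class / dominated convergence), and the omitted set $\{\|x\|=a\}\cap S_V(a)$ has $\sigma$-measure zero so contributes nothing.

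The only substantive step is the Gram-determinant calculation, and even there the work is minimal once one notices that $\hat y\perp\tau_\alpha$, so the cross-block vanishes and the $W$-block is a rank-one perturbation of the identity whose determinant is immediate. Everything else is bookkeeping atop the scaling identity \eqref{E:spherscal} already recorded in the paper; in particular, this also justifies calling the result merely a ``slight rephrasing'' of the version in \cite{SenGRL2016}.
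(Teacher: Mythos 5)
Your argument is correct, and the key computation checks out: the Gram matrix of the parametrization $\Phi(x,\hat y)=x+a_x\hat y$ is block-diagonal because $W\perp W^\perp$ and $\hat y\perp\tau_\alpha$, the $W$-block $I+xx^\top/a_x^2$ has determinant $a^2/a_x^2$ by the rank-one update formula, and the bookkeeping $a\,a_x^{d_2-2}\cdot a_x^{-(d_2-1)}=a/a_x$ after rescaling the fiber sphere is right. But this is a genuinely different route from the paper's. The paper deliberately avoids the area formula for parametrized hypersurfaces: it defines $\sigma$ purely by the cone-volume formula (\ref{E:defsig}), derives only the scaling identity (\ref{E:spherscal}) and the polar disintegration (\ref{E:polard}) from it, and then proves the theorem by computing $\int_{\mbr^{d+1}}F(z)\psi(\norm{z}^2)\,dz$ (with $F$ the radially constant extension of $f$) in two ways --- once by polar coordinates globally, once by Fubini on $\mbr^k\oplus\mbr^{d+1-k}$ followed by polar coordinates in the second factor --- and matching the two after a change of variables. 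What your approach buys is brevity and geometric transparency: the factor $a/a_x$ appears immediately as a Jacobian. What it costs is self-containedness: you are implicitly identifying the cone-defined measure $\sigma$ with the Riemannian surface (Hausdorff) measure on the sphere, which is exactly the kind of auxiliary fact (coarea formula or equivalent) the paper's ``entirely self-contained argument'' is constructed to avoid. If you adopt your route you should either prove that identification or take the Riemannian surface measure as the definition of $\sigma$ from the start; with that caveat, your proof is complete, including the correct observation that the omitted set $S_V(a)\cap W$ is a sphere of dimension $\dim W-1<d$ and hence $\sigma$-null.
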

Note that formula (\ref{E:spherscal}) holds whenever $f$ is integrable on $S_V(a)$ since it holds for  non-negative  $f$.

With notation as above, let
$$P:V\to V$$
be  the  orthogonal projection onto an affine subspace $W$; thus  $Pz$ is the point on $W$ closest to $z$. Then the disintegration formula (\ref{E:spheredisint}) can be expressed as:
\begin{equation}\label{E:disintfsigP}
\int_{S_{V}(a)}f\,d\sigma= \int_{B_W(a)}\left[\int_{S_{V}(a)\cap P^{-1}(x)}f\,d\sigma\right]\frac{a}{\sqrt{a^2-\norm{x}^2}}\,dx.
\end{equation}

Since the expression $\sqrt{a^2-\norm{x}^2}$ will keep appearing we will use the notation from (\ref{E:defax}):
\begin{equation}\label{E:ax}
a_x=\sqrt{a^2-\norm{x}^2}.
\end{equation}
We will use other similar notation, such as $a_t$ when $t\in\mbr$.

 For a point $x$ inside the ball $B_{V}(a)$ of radius $a$ in $V$,  the geometric meaning of $a_x$ is the radius of the slice $S_{V}(a)\cap P^{-1}(x)$. If $z\in S_{V}(a)$ lies on $P^{-1}(x)$ then we can write $z$ as $x$ plus a `radial vector' orthogonal to $x$ from the center $x$ of the slice to $z$:
$$z= Pz +z-Pz =x+ (I-P)z,$$
where $(I-P)z\in \ker P$ is orthogonal to the image ${\rm Im}(P)$ and hence also orthogonal to $x$.  Thus
\begin{equation}\label{E:zx}
\norm{z-x}^2= \norm{z}^2-\norm{x}^2=a_x^2,
\end{equation}
which means that any $z\in S_{V}(a)\cap P^{-1}(x)$ lies at the fixed distance $a_x$ from $x$.

The geometric meaning of $a/a_x$ is $1/\cos\theta_x$, where $\theta_x$ is the angle between the vector from $x$ to $z$ and the vector $z$:
\begin{equation}\label{E:axthet}
\frac{a}{a_x}=\frac{\norm{z}}{\norm{z-x}}=\frac{1}{\cos\theta_x}.
\end{equation}
This is illustrated in Figure \ref{F:angleandtrig}.

 \begin{figure}
\tikzset{
    partial ellipse/.style args={#1:#2:#3}{
        insert path={+ (#1:#3) arc (#1:#2:#3)}
 }}

\centering
 \begin{tikzpicture}[scale=2.5] 
 

\shadedraw[line width=0.15mm, white,  ball color = gray!40, opacity = 0.6 ] (0, 0, 0) circle (1);


\draw (0,0) circle (1);  


\draw[thick, ->] (-1.2,0) -- (1.2,0) coordinate (x axis) node[right] {$ $ };
\draw[thick,->] (0,-1.2) -- (0,1.2) coordinate (z axis)node[above] {$ $ };
\draw[thick, ->] (0,0) -- (-.3,-.4) coordinate (y axis)node[right] {$ $ };

 \draw [dashed] (0,0) ellipse (.17 and 1);
 \draw[thick] (0,0) [partial ellipse=90:270: .17 and 1];

\draw [dashed] (-.8,0) ellipse (.1 and .6);
 \draw[thick] (-.8,0)[partial ellipse=90:270: .1  and .59];

\draw [dashed] (0,-.8) ellipse (.6 and .08);
  \draw[thick] (0,-.8)[partial ellipse=180:360: (.6 and .08];
  
\draw [dashed] (0,.8) ellipse (.6 and .08);
  \draw[thick] (0,.8)[partial ellipse=180:360: (.6 and .08];

 
\coordinate [label=above:$z$] (P) at (.8,.6);

\coordinate [label=right:$x$] (x) at (.4,-.05);

\coordinate [label=right:$a$] (a) at (.4,.45);

\coordinate [label=right:$\theta_x$] (th) at (.61,.3);

\filldraw[fill=black!20,draw=black!50!black] (.8,.6) -- (.64,.48) arc
(200:260: .2) --(P);

\draw[very thick,black] (0,0)--(.8,0);

\draw[very thick,black] (.8,0) --(0.8,0.3)  node[right ]{$z-x\qquad |\!|z-x|\!|=a_x\stackrel{\rm def}{=}\sqrt{a^2-x^2}$}--(P);


\draw [very thick, black] (0,0)--(P);


\end{tikzpicture}
\caption{Illustration for $a_x$ and $\theta_x$}
\label{F:angleandtrig}
\end{figure}

Formula  (\ref{E:spherscal})  can be proved using polar coordinates or other more differential geometric methods but we present an entirely self-contained  argument. 
 
 \begin{proof}  We assume that $f\geq 0$; all other cases follow by taking real and imaginary parts if $f$ is complex-valued, and positive and negative parts for real-valued $f$.  By choosing an orthonormal basis $e_1,\ldots, e_k$ in $W$,   and extending to an orthonormal basis $e_1,\ldots, e_k, e_{k+1},\ldots, e_{d+1}$ of $V$, we will assume that $V=\mbr^{d+1}$ and $W=\mbr^k\oplus \{0\}$.    Thus the formula we have to establish is
 \begin{equation}\label{E:spheredisint2}
\int_{S^d(a)}f\,d\sigma =\int_{x\in B_k(a)}\left[\int_{y\in S^{d-k}(a_{x})} f(x,y)\,d\sigma(y)\right]\frac{a}{a_{x}}\,dx,
\end{equation}
where $S^d(a)$ is the sphere of radius $a$, centered at $0$, in $\mbr^{d+1}$, and $B_k(a)$ is the ball of radius $a$, center $0$, in $\mbr^k$.

  Let $F$ be the function on $\mbr^{d+1}$ given by
\begin{equation}\label{E:defFf}
F(z)= f\left(\frac{a}{\norm{z}}z\right),
\end{equation}
 with $F(0)$ defined arbitrarily.  Thus $F$ is constant along radial rays and equal to $f$ on the sphere $S^d(a)$.
 
 Let $\psi$ be any non-negative Borel function on $[0,\infty)$. We work out the integral
$$\int_{\mbr^{d+1}}F(z)\psi(\norm{z}^2)\,dz$$
in two ways.   
 
Using the polar disintegration formula (\ref{E:polard}) and scaling (\ref{E:spherscal}) we have
\begin{equation}\label{E:Fpsi}
\begin{split}
 \int_{\mbr^{d+1}}F(z)\psi(\norm{z}^2)\,dz  &=\int_0^\infty\left[\int_{S^d }F(rw) \,r^dd\sigma(w)\right]\,\psi(r^2)\,dr\\
 &=\left(\int_{S^d(a)}f(w)\,d\sigma(w)\right) \int_0^\infty \psi(r^2)(r/a)^d\,dr.
 \end{split}\end{equation}
This expresses the spherical integral on the right in terms of the volume integral on the left.

Next we  will split $\mbr^{d+1}$ into $\mbr^k$ and $\mbr^{d+1-k}$ and disintegrate the left side in (\ref{E:Fpsi}) by repeated use of Fubini's theorem:

\begin{equation}\label{E:intFpsi1}
\begin{split}
&\int_{\mbr^{d+1}}F(z)\psi(\norm{z}^2)\,dz \\
&= \int_{\mbr^k}\left[\int_{\mbr^{d+1-k}}F(x,y)\psi(\norm{x}^2+\norm{y}^2)\,dy\right]\,dx\\
&  = \int_{\mbr^k}\left[\int_{R\in (0,\infty)}\left\{\int_{w\in S^{d-k} } F(x,Rw)\psi(\norm{x}^2+R^2)\,d\sigma(w)\right\}\,R^{d-k}dR \right]\,dx \\
&=\int_{\mbr^k\times (0,\infty)\times S^{d-k}} F(x,Rw)\psi(\norm{x}^2+R^2)\,d\sigma(w) \,R^{d-k}dR \,dx.
\end{split}
\end{equation}
Here we have used the assumption that $W$ is a {\em proper} subspace of $V$, which in the present notation means that $k<d+1$.
Now, for {\em fixed} $x\in\mbr^{k}$, we change variables from $R$ to $r\geq \norm{x}$ given by
\begin{equation}\label{E:r2Rx}
r^2= R^2+\norm{x}^2. 
\end{equation}
Then
\begin{equation}\label{E:rdr}
rdr=RdR.
\end{equation}
Hence, using (\ref{E:intFpsi1}), we have
\begin{equation}
\begin{split}
&\int_{\mbr^{d+1}}F(z)\psi(\norm{z}^2)\,dz \\
&= \int_{x\in \mbr^k, R\in (0,\infty), w\in S^{d-k}, r\geq \norm{x}} F(x,Rw)\psi(\norm{x}^2+R^2)\,{d\sigma(w)} \,R^{d-k}dR \,dx \\
&=\int _{r\in (0,\infty), x\in B_k(r), w\in S^{d-k}} F(x, r_xw) \psi(r^2) \, d\sigma(w)\, r_x^{d-k-1}\,rdr \,dx
\end{split}
\end{equation}
where we have now written $r_x$ for $R$:
\begin{equation}\label{E:defrx}
r_x=\sqrt{r^2-\norm{x}^2}.
\end{equation}
Recalling  the choice of the function $F$, we have:
$$F(z)=f\left(\frac{a}{\norm{z}}z\right)= f\left((a/r)x, (aR/r)w \right)\quad\hbox{ if $z=(x,Rw)$ with $w\in S^{d-k}$.}$$
 
 Thus 
  \begin{equation}\label{E:Fpsidz2}
\begin{split}
&\int_{\mbr^{d+1}}F(z)\psi(\norm{z}^2)\,dz \\
&=\int _{r\in (0,\infty), x\in B_k(r)}\left[\int_{ w\in S^{d-k}} f\left(ax/r, ar_xw/r\right) \, d\sigma(w)\right]\, r_x^{d-k-1}\,dx\, \psi(r^2)rdr. \end{split}
\end{equation}

Keeping in mind that $f$ is  evaluated only at points on the sphere $S^d(a)$, we change coordinates to make clearer use of this.
For fixed   $r$ and $x$, we change from variable $w$ to 
\begin{equation}\label{E:wprimexprime}
w'=\frac{ar_x}{r}w =a_{x'}w,\qquad\hbox{where}\quad  {x'}=\frac{a}{r}x,
\end{equation}
 which changes the spherical integral on the right side of (\ref{E:Fpsidz2})  to
$$a_{x'}^{k-d}\int_{w'\in S^{d-k}(a_{x'})}f(x',  w') \,{d\sigma(w')}.$$
 Thus:
\begin{equation*}
\begin{split}
&\int_{\mbr^{d+1}}F(z)\psi(\norm{z}^2)\,dz \\
&=  \int _{r\in (0,\infty), x\in B_k(r) }\left[\int_{S^{d-k}(a_{x'})} f(x',  w') \,a_{x'}^{k-d}{d\sigma(w')}\right]\, r_x^{d-k-1}\,dx\,\psi(r^2)\,rdr\\
&=  \int _{r\in (0,\infty), x\in B_k(r) }\left[\int_{S^{d-k}(a_{x'})} f(x',  w') 
\,{d\sigma(w')}\right]\, a_{x'}^{k-d} r_x^{d-k-1}\,dx\,\psi(r^2)\,rdr.
 \end{split}
\end{equation*}
 
Note that since $x\in B_k(r)$ we have $x'\in B_k(a)$, and, by (\ref{E:wprimexprime}),
$$dx=(r/a)^{ k}dx'$$
and
$$r_x = \frac{r}{a}a_{x'}.$$
Thus:
\begin{equation}\label{E:intF2}
\begin{split}
&\int_{\mbr^{d+1}}F(z)\psi(\norm{z}^2)\,dz \\
&= \int _{r\in (0,\infty), x'\in B_k(a) }\left[\int_{S^{d-k}(a_{x'})} f(x',  w') \,{d\sigma(w')}\right]\frac{a}{a_{x'}}\ \,dx'\psi(r^2)\,(r/a)^{d }dr.
 \end{split}
\end{equation}
 Choosing  $\psi$ for which 
$$\int_0^\infty \psi(r^2)(r/a)^d\,dr=1,$$
and comparing (\ref{E:intF2}) with the earlier expression  (\ref{E:Fpsi}) we obtain:
\begin{equation}
\begin{split}
 \int_{S^d(a)}f\,d\sigma = \int_{x'\in B_k(a) }\left[\int_{S^{d-k}(a_{x'})} f(x',  w') \,{d\sigma(w')}\right]\frac{a}{a_{x'}} \,dx' .  \end{split}
\end{equation}
 \end{proof}
   
\subsection{A more general spherical disintegration } We leverage the disintegration formula (\ref{T:spherdisint})  to obtain a more general form by allowing projections that are not orthogonal.

 \begin{theorem}\label{T:gendisint}
  Let $V$ be  a finite-dimensional real inner-product space and let 
 $$L:V\to X$$
 be a linear surjection onto a    real inner-product space $X$, where $0<\dim X<\dim V$.  Then $L$ restricts to an isomorphism
 \begin{equation}
 L_0:(\ker L)^{\perp}\to X,
 \end{equation}
and
\begin{equation}\label{E:disintfsigBkLS}
\begin{split}
&\int_{S_V(a)} f\,d\sigma \\
&=\int_{x\in X, \norm{L_0^{-1} x}< a} \left\{\int_{ S_V(a)\cap L^{-1}(x)} f \,d\sigma     \right\}  \frac{a}{\sqrt{a^2-\norm{L_0^{-1}x}^2} }  \frac{dx}{\sqrt{|\det LL^*|}},
\end{split}
\end{equation}
for any  non-negative or bounded Borel function $f$, defined on the sphere $S_V(a)$ of radius $a$ and center $0$ in $V$.
\end{theorem}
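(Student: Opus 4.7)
The plan is to reduce Theorem \ref{T:gendisint} to the orthogonal-projection disintegration already established as Theorem \ref{T:spherdisint} (in its form (\ref{E:disintfsigP})), via a linear change of variables. The essential observation is that although $L$ need not be an orthogonal projection, it factors through one: setting $W=(\ker L)^{\perp}$ and letting $P:V\to W$ be the orthogonal projection, the restriction $L_0=L|_W:W\to X$ is a linear isomorphism and $L=L_0\circ P$. In particular, for any $x\in X$, the fiber $L^{-1}(x)$ coincides with $P^{-1}(L_0^{-1}x)$, and hence the two slices being integrated over in Theorems \ref{T:spherdisint} and \ref{T:gendisint} actually agree once $w=L_0^{-1}x$.

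With this setup I would first apply the projection form (\ref{E:disintfsigP}) of the basic disintegration formula with the subspace $W$:
\begin{equation*}
\int_{S_V(a)}f\,d\sigma=\int_{B_W(a)}\left[\int_{S_V(a)\cap P^{-1}(w)}f\,d\sigma\right]\frac{a}{\sqrt{a^2-\norm{w}^2}}\,dw.
\end{equation*}
Next I would perform the linear change of variable $w=L_0^{-1}x$, $x\in X$. Since $L_0^{-1}:X\to W$ is an isomorphism of inner-product spaces of equal finite dimension, its Jacobian with respect to Lebesgue measure (read off using orthonormal bases) is $|\det L_0^{-1}|=1/|\det L_0|$. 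The region $w\in B_W(a)$ transforms to $\{x\in X:\norm{L_0^{-1}x}<a\}$, the factor $\sqrt{a^2-\norm{w}^2}$ becomes $\sqrt{a^2-\norm{L_0^{-1}x}^2}$, and $P^{-1}(w)=L^{-1}(x)$, producing
\begin{equation*}
\int_{S_V(a)}f\,d\sigma=\int_{\{\norm{L_0^{-1}x}<a\}}\left[\int_{S_V(a)\cap L^{-1}(x)}f\,d\sigma\right]\frac{a}{\sqrt{a^2-\norm{L_0^{-1}x}^2}}\,\frac{dx}{|\det L_0|}.
\end{equation*}

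To finish I have to identify $|\det L_0|$ with $\sqrt{|\det LL^*|}$, which is the one nontrivial algebraic step. I would establish first that $L^*=L_0^*$, interpreted as maps $X\to V$: for any $x\in X$ and $v\in V$, orthogonality of $\ker L$ and $W$ gives $\la L^*x,v\ra_V=\la x,Lv\ra_X=\la x,L_0Pv\ra_X=\la L_0^*x,Pv\ra_W=\la L_0^*x,v\ra_V$, using that $L_0^*x\in W$. Consequently $LL^*=L_0L_0^*$ as endomorphisms of $X$, so $|\det L_0|^2=\det(L_0L_0^*)=|\det LL^*|$, and the Jacobian $1/|\det L_0|$ equals $1/\sqrt{|\det LL^*|}$, yielding (\ref{E:disintfsigBkLS}).

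The main obstacle I anticipate is not the geometric reduction, which is essentially forced once one observes $L=L_0\circ P$, but rather the bookkeeping for the Jacobian: one must take care to distinguish $L_0^*$ as a map into $W$ from its extension into $V$, and to check that the Lebesgue measure on $X$ used in the statement matches the one transported from $W$ via $L_0^{-1}$. Once the identity $LL^*=L_0L_0^*$ is verified, the rest is a clean substitution.
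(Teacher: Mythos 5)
Your proposal is correct and follows essentially the same route as the paper: the paper likewise applies the orthogonal-projection disintegration with $W=(\ker L)^{\perp}$ and then transports the outer integral to $X$ via the isomorphism $L_0^{-1}$, using $L=L_0P_{\perp}$ to identify the fibers $P_{\perp}^{-1}(L_0^{-1}x)=L^{-1}(x)$. Your verification of $|\det L_0|=\sqrt{|\det LL^*|}$ via $LL^*=L_0L_0^*$ is in fact spelled out more explicitly than in the paper, which simply asserts this identity.
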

Let us observe that if
\begin{equation}\label{E:Pperp}
P_{\perp}:V\to V
\end{equation}
is the orthogonal projection onto $(\ker L)^{\perp}$ then 
\begin{equation}\label{E:LLP0}
L_0P_{\perp}=L.
\end{equation}
We can check this by noting that for any $z\in \ker L$ both sides are $0$ and for any $z\in (\ker L)^\perp$ the left side equals $L_0z$, which, by definition of $L_0$, is $Lz$.
 
\begin{proof}
We use the standard formula for transformation of integrals
\begin{equation}\label{E:phiJint}
\int_{X'}\phi(x')\,dx' = \int_{X}\phi(Jx)\,|\det J|\,dx 
\end{equation}
where $J:X\to X'$ is an isomorphism of a finite-dimensional inner-product space $X$ onto an inner-product space $X'$. This is valid for any Borel function $\phi$ on $X'$ for which either side of (\ref{E:phiJint}) exists.  We apply this with $J=L_0^{-1}:X\to (\ker L)^\perp$ to obtain
\begin{equation}\label{E:phixxprim}
\int_{(\ker L)^\perp} \phi(x')\,dx'= \int_X \phi(L_0^{-1}x)\frac{dx}{ |\det L_0|}.
\end{equation}
The Jacobian term $|\det L_0|$ is computed as the absolute value of the determinant of any matrix of $L_0$ relative to orthonormal bases in $(\ker L)^\perp$ and $X$; in terms of $L$ it is given by:
\begin{equation}\label{E:detL0}
|\det L_0|=\sqrt{|\det LL^*|}.
\end{equation}
Let us note that if $z\in L^{-1}(x)$ then $Lz=x$ and so, with $z_0=L_0^{-1}x\in (\ker L)^\perp$, we have 
\begin{equation}
L(z-z_0)=0,
\end{equation}
and so
\begin{equation}z\in z_0+\ker L.
\end{equation}
Thus any point in $L^{-1}(x)$ is $L_0^{-1}(x)$ plus a vector orthogonal to $z_0$
and so
\begin{equation}\label{E:zz0norm}
\hbox{the element of smallest norm in $L^{-1}(x)$ is $z_0=L_0^{-1}x$.}
\end{equation}
For $\phi$ we use the function on $(\ker L)^\perp$ given by
\begin{equation}\label{E:phixp}
\phi(x')=\frac{a}{a_{x'}} \int_{S_V(a)\cap P_{\perp}^{-1}(x')}f\,d\sigma,
\end{equation}
where 
$ P_{\perp}:V\to V$ 
is the orthogonal projection onto the subspace $(\ker L)^{\perp}$, as in (\ref{E:Pperp}), and the right side in (\ref{E:phixp}) is taken to be $0$ when $\norm{x'}\geq a$. If $f$ is continuous then $\phi$ is continuous on the open ball of radius $a$, and $0$ outside this ball. Then by standard limiting arguments $\phi$ is Borel when $f$ is the indicator function of a compact set,  and hence $\phi$ is Borel for any  non-negative or bounded Borel function $f$.

Then
\begin{equation}\label{E:intL0inv}
\phi(L_0^{-1}x)=\frac{a}{a_{L_0^{-1}x}}\int_{S_V(a)\cap (L_0P_{\perp})^{-1}(x)}f\,d\sigma=\frac{a}{a_{L_0^{-1}x}}\int_{S_V(a)\cap L^{-1}(x)}f\,d\sigma,
\end{equation}
on using the relation (\ref{E:LLP0}). Here, and often, we take the integral over the empty set to be $0$; thus:
$$\hbox{$\phi(L_0^{-1}x)$ is $0$ if $L^{-1}(x)\cap S_V(a)$ is empty.}$$
By (\ref{E:zz0norm}), this means
\begin{equation}\label{E:phizero}
\phi(L_0^{-1}x)=0\qquad\hbox{if $\norm{L_0^{-1}x}>a$.}
\end{equation}
We assume for now that $f\geq 0$; then $\phi\geq 0$. Applying (\ref{E:phixxprim}) and (\ref{E:intL0inv}), we have
\begin{equation}\label{E:phixxprim2}
\begin{split}
&\int_{(\ker L)^\perp}\left[\int_{S_V(a)\cap P^{-1}(x')}f\,d\sigma\right]\frac{a}{a_{x'}}\,dx'\\
&= \int_X \left[\int_{S_V(a)\cap L^{-1}(x)}f\,d\sigma\right]\frac{a}{a_{L_0^{-1}x}}\frac{dx}{ |\det L_0|}.
\end{split}
\end{equation}
The integrand on the left  is $0$ outside the ball of radius $a$ in $(\ker L)^\perp$ and, by (\ref{E:phizero}), the integrand on the right is $0$ unless $\norm{L_0^{-1}x}<a$. By Theorem \ref{T:spherdisint}   the left side is equal to $\int_{S_V(a)}f\,d\sigma$. This proves the identity (\ref{E:disintfsigBkLS}) for $f\geq 0$.

For general complex-valued bounded $f$ the result follows by considering real and imaginary parts and then positive and negative parts. Since $f$ is bounded, all the integrals over $S_V(a)$ involved are finite.
\end{proof}

We are mainly interested in the case where $\dim V$ is large compared to $\dim X$, and, in particular, $m=\dim V-\dim X$ is $\geq 2$. Then in the definition (\ref{E:phixp}) of  $\phi(x')$  the integral of $f$ is over a sphere, of dimension $m-1\geq 1$,  of radius $a_{x'}$, and so, for bounded $f$, the integral is bounded by a constant times $a_{x'}^{m-1}$. Therefore, $\phi(x')$ itself is bounded by a constant times a non-negative power of $a_{x'}$. Thus, $\phi$ is  bounded if $f$ is bounded.

\subsection{A more general slice}   In  the following result $Z$, $W$, and $X$ are  finite-dimensional inner-product spaces, and $\cl:Z\to X$ and $Q:Z\to W$ are  linear surjections.  Figure \ref{F:fullfig} describes the setting of the result.

 \begin{figure}
\tikzset{
    partial ellipse/.style args={#1:#2:#3}{
        insert path={+ (#1:#3) arc (#1:#2:#3)}
 }}

 {
\centering

   \begin{tikzpicture}   [scale=.6,   rotate= 40]

 
\def\R{3} 
\def\angEl{40} 

\filldraw[ball color=white] (0,0) circle (\R);


 \coordinate [label=left:\textcolor{black}{$Q^{-1}(w^0)$}] (L) at (3.5, 2.25);

 \coordinate [label=left:\textcolor{black}{$S_Z(a)$}] (X) at 
 (3.5, -2);
 
  \coordinate [label=left:\textcolor{black}{$z^0$}] (z0) at 
 (0,1.5);
 

 \coordinate [label=left:\textcolor{black}{${}$}](G) at (-6,.5);

\coordinate [label=left:\textcolor{black}{${}$}] (C) at (5,.5);

\coordinate (F) at (4,2.5);

\coordinate (H) at (-5 ,2.5);

 \draw (G)-- (H);
 
 \draw (C)--(F);


\shadedraw [color=black, opacity = .5]  (-7,-.25)--(6,-.25)--(4.5,3)--(-5.5,3)--(-7,-.25);


 \draw[->, shorten <=1pt,shorten >=1pt](0,0)--(0, 1.5);
 
 
\draw[black,fill=black] (0,0) circle (.5ex);

\draw[black,fill=black] (0,1.5) circle (.5ex); 

 
 \shadedraw [color=black, dashed, opacity=0.3] (0,1.5) ellipse (2.5  and 1);
 
 \draw[thick] (0,1.5) [partial ellipse=150:370: 2.5 and 1];


\shadedraw [color=black, opacity = .5, cm={cos(45) ,-sin(45) ,sin(45) ,cos(45) ,(-4,-8)}]  (-7,-.25)--(6,-.25)--(4.5,3)--(-5.5,3)--(-7,-.25);

 \coordinate [label=left:\textcolor{black}{$X$}] (X) at 
 (-1,-8);


\draw[->, thick, shorten <=1pt,shorten >=1pt](5,-3)--(.7,-7.7);

 \coordinate [label=left:\textcolor{black}{${\mathcal L}$}] (cL) at 
 (4,-5);
 
  
   \shadedraw [color=black, dashed, opacity=0.8] (-5.5,-4.5) ellipse (1.5  and .8);

  \coordinate [label=right: \textcolor{black}{$D$}] (D) at 
 (-5.5,-3.6 );
 
 \coordinate [label=below:\textcolor{black}{$x^0={\mathcal L}(z^0)$}] (cL) at (-5.5,-4.8);
 
 \draw[black,fill=black] (-5.5,-4.5) circle (.5ex); 
 

      \coordinate [label=left:\textcolor{black}{$z^0$}] (z0) at 
 (0,1.5);

 
 \coordinate [label=left:\textcolor{black}{$Z$}] (cL) at 
 (6, -3.5);

\end{tikzpicture}

}
\caption{The affine subspace $Q^{-1}(w^0)\subset Z$ slicing the sphere $S_Z(a)$, the `projection' ${\mathcal L}:Z\to X$,  the points $z^0$, closest in $Q^{-1}(w^0)$ to the center,  and $x^0={\mathcal L}(z^0)$, and the  ellipsoid $D$ which is the projection on $X$ of the slice of the ball by $Q^{-1}(w^0)$.}
\label{F:fullfig}
\end{figure}

We consider the sphere $S_Z(a)$, centered at $0$ and of radius $a>0$, in $Z$. The sphere is sliced along a `circle' by an affine subspace $Q^{-1}(w^0)$, where $w^0$ is some point in $W$.  We denote by $z^0$ the point on  $Q^{-1}(w^0)$  closest to the origin, and
$$x^0={\cl}(z^0)\in X$$
the `projection' of $z^0$ on $X$ by $\cl$.
We will also need the restriction of $\cl$ to the subspace $\ker Q$, and the determinant of this restriction. In more detail, let $L_0$ be the restriction of $\cl$ to  the subspace of $\ker Q$ that is the orthogonal complement of $\ker ({\cl}|\ker Q)$:
$$L_0:\ker Q\ominus\ker \cl\to \cl(\ker Q):z\mapsto {\cl}z,$$
where on the left we have the orthogonal complement of $\ker(\cl|\ker Q)$ within $\ker Q$. As before in (\ref{E:detL0}), the determinant $|\det L_0|$ is the absolute value of the determinant of the matrix of $L_0$ relative to orthonormal bases in its domain and range; we take $|\det L_0|$ to be $1$ in the degenerate case where $L_0$ is $0$.

\begin{theorem}\label{T:slicedisint}    (Figure \ref{F:fullfig})
Let   $f$ a bounded, or non-negative, Borel function defined on  the `circular slice' $S_Z(a)\cap Q^{-1}(w^0)$ for some $w^0\in W$.  Let ${z^0}$ be the point on  $Q^{-1}(w^0)$ closest to $0$, ${x^0}={\cl}{z^0}\in X$. Let $L_0$ be the restriction of $\cl$ to the subspace of $\ker Q$ that is the orthogonal complement of $\ker ({\cl}|\ker Q)$. Then
\begin{equation}\label{E:disintgenslice}
\begin{split}
&\int_{S_Z(a)\cap Q^{-1}(w^0)}f\,d\sigma\\
&=\int_{x\in D}\left\{\int_{S_Z(a)\cap Q^{-1}(w^0)\cap {\cl}^{-1}(x)}f\,d\sigma\right\}\frac{a_{{z^0}} } {\sqrt{a_{{z^0}}^2 -  \norm{L_0^{-1}(x-{x^0})}^2  }}\,\frac{dx}{|\det L_0 |},
\end{split}
\end{equation}
where    $D$   consists of all $x\in x^0+{\cl}(\ker Q)\subset X$ for which the term under the square-root is positive:
\begin{equation}\label{E:defDLx0}
D=x^0+\{y\in {\cl}(\ker Q) :\,    \norm{L_0^{-1}(y)} < a_{{z^0}}\}.
\end{equation}
\end{theorem}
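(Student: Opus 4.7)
The plan is to reduce the statement to an application of Theorem \ref{T:gendisint} by identifying the slice $S_Z(a)\cap Q^{-1}(w^0)$ with a translate of a standard sphere in the inner-product space $\ker Q$. Since $z^0$ is the point of the affine subspace $Q^{-1}(w^0)=z^0+\ker Q$ nearest the origin, $z^0\perp\ker Q$; hence, for $u\in\ker Q$, one has $\norm{z^0+u}^2=\norm{z^0}^2+\norm{u}^2$, so $z^0+u\in S_Z(a)$ if and only if $\norm{u}=a_{z^0}$. Therefore
\[
S_Z(a)\cap Q^{-1}(w^0) = z^0+S_{\ker Q}(a_{z^0}),
\]
and translation invariance of surface measure gives $\int_{S_Z(a)\cap Q^{-1}(w^0)} f\,d\sigma = \int_{S_{\ker Q}(a_{z^0})} g\,d\sigma$, where $g(u):=f(u+z^0)$.

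Next I would apply Theorem \ref{T:gendisint} inside the inner-product space $\ker Q$, to the sphere $S_{\ker Q}(a_{z^0})$ and to the linear surjection $\tilde L:=\cl|\ker Q:\ker Q\to \cl(\ker Q)$. The isomorphism $(\ker\tilde L)^\perp\to\cl(\ker Q)$ supplied by that theorem is, by construction, precisely the map $L_0$ defined in the present statement, so (\ref{E:detL0}) identifies the Jacobian $\sqrt{|\det \tilde L\tilde L^*|}$ with $|\det L_0|$. Theorem \ref{T:gendisint} then provides a disintegration of $\int_{S_{\ker Q}(a_{z^0})} g\,d\sigma$ over $y$ ranging in $\{y\in \cl(\ker Q):\norm{L_0^{-1}y}<a_{z^0}\}$, with inner integrand $\int_{S_{\ker Q}(a_{z^0})\cap\tilde L^{-1}(y)} g\,d\sigma$, weighted by $a_{z^0}/\sqrt{a_{z^0}^2-\norm{L_0^{-1}y}^2}$ against $dy/|\det L_0|$.

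Finally, I would translate back. The identity $\cl(z^0+u)=x^0+\cl(u)$ shows that the substitution $u\mapsto z^0+u$ on the sphere-side corresponds to the affine substitution $x:=x^0+y$ on the target. This converts $\tilde L^{-1}(y)\cap S_{\ker Q}(a_{z^0})$ into $\cl^{-1}(x)\cap S_Z(a)\cap Q^{-1}(w^0)$, turns $dy$ into $dx$ on the affine subspace $x^0+\cl(\ker Q)$, and takes the constraint $\norm{L_0^{-1}y}<a_{z^0}$ to $\norm{L_0^{-1}(x-x^0)}<a_{z^0}$, which is exactly the defining condition for $D$. Substituting yields (\ref{E:disintgenslice}). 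The main obstacle is bookkeeping rather than substance: one must keep the two translations---on the ambient sphere and on the image---synchronised, and must verify that the $L_0$ produced by Theorem \ref{T:gendisint} applied to $\tilde L$ coincides with the $L_0$ of the present statement (both are the restriction of $\cl$ to the orthogonal complement of $\ker(\cl|\ker Q)$ inside $\ker Q$). Degenerate cases such as $\cl(\ker Q)=\{0\}$, for which $D$ collapses to $\{x^0\}$ and $\cl^{-1}(x^0)$ contains the entire slice, should be treated separately but reduce to an immediate tautology once the convention $|\det L_0|=1$ is adopted.
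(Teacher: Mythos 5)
Your proposal is correct and follows essentially the same route as the paper's own proof: translate the slice to the sphere $S_{\ker Q}(a_{z^0})$ using $z^0\perp\ker Q$, apply Theorem \ref{T:gendisint} to $\cl|\ker Q$ inside $\ker Q$, and translate back via $x=x^0+y$, matching the fibers $[S_{\ker Q}(a_{z^0})\cap \tilde L^{-1}(y)]+z^0=S_Z(a)\cap Q^{-1}(w^0)\cap\cl^{-1}(x)$. The bookkeeping steps you flag are exactly the ones the paper carries out, including the identification of the two versions of $L_0$ and the degenerate case.
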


On the left in (\ref{E:disintgenslice}) is the integral of $f$ over the  `circular' slice of the sphere $S_Z(a)$ by the affine subspace $Q^{-1}(w^0)$. On the right is the disintegration of this with respect to the values of $\cl$. In this disintegration each fiber $S_Z(a)\cap Q^{-1}(w^0)\cap {\cl}^{-1}(x)$ is a sphere of radius $\sqrt{a_{{z^0}}^2 -  \norm{L_0^{-1}(x-{x^0})}^2  }$, as we will show following the proof below. The set $D$ is an ``ellipsoid.'' (In the degenerate case where $\cl$ is actually zero on $\ker Q$ the integral over $dx$ drops out and we have a trivial equality in (\ref{E:disintgenslice}).)

Figure \ref{F:fullfig2} illustrates some of the objects involved here.  In the picture, $\ker Q$ is a two-dimensional subspace (through the origin, parallel to $Q^{-1}(w^0)$). Since ${\mathcal L}$ maps $\ker Q$ onto the two-dimensional space $X$, its kernel is, in this picture, just $\{0\}$.

\begin{proof} We will apply the disintegration result Theorem \ref{T:gendisint}, taking for $V$  the subspace $\ker Q\subset Z$, and $L$ the restriction of ${\cl}$ to $V$:
\begin{equation}\label{E: LcL}
L={\cl}|\ker Q: V\to {\cl}(\ker Q).
\end{equation}
Then $L_0$ is, as in  Theorem \ref{T:gendisint}, the restriction:
\begin{equation}
L_0: (\ker L)^\perp  \to {\rm Im}(L)={\cl}(\ker Q),
\end{equation}
where $(\ker L)^\perp$ is the subspace of $V$ consisting of all vectors in $V$ orthogonal to $\ker L$. In more detail,
\begin{equation}\label{E:defKerLp}
(\ker L)^\perp = \{z\in \ker Q: z\in (\ker {\cl}\cap \ker Q)^\perp\}.
\end{equation}

 The center of the `circle' $S_Z(a)\cap Q^{-1}(w^0)$ is the point on $Q^{-1}(w^0)$ closest to $0$. Let us check that this point is given by
\begin{equation}\label{E:p0}
{z^0}=Q^*(QQ^*)^{-1}(w^0);
\end{equation}
here we note that since $Q$ is surjective, $QQ^*$ is invertible because any vector in its kernel would also be in $\ker Q^*=\bigl[{\rm Im}(Q)\bigr]^\perp$.
Clearly,
$$Q{z^0}=w^{0},$$
and if $v\in \ker Q$ then
\begin{equation}\label{E:kerQvz}
\la v, {z^0}\ra =\la Qv, (QQ^*)^{-1}(w^0)\ra=0.
\end{equation}
This implies that the point ${z^0}+v\in Q^{-1}(w^0)$ has norm
$$\norm{v+{z^0}}^2=\norm{v}^2+\norm{{z^0}}^2\geq \norm{{z^0}}^2,$$
thus showing that ${z^0}$ is the unique point on $Q^{-1}(w^0)$ closest to the origin.

 \begin{figure}
\tikzset{
    partial ellipse/.style args={#1:#2:#3}{
        insert path={+ (#1:#3) arc (#1:#2:#3)}
 }}

 {
\centering

   \begin{tikzpicture}   [scale=.6,   rotate= 40]

 
\def\R{3} 
\def\angEl{40} 

\filldraw[ball color=white] (0,0) circle (\R);


 \coordinate [label=left:\textcolor{black}{$Q^{-1}(w^0)$}] (L) at (3.5, 2.25);

 \coordinate [label=left:\textcolor{black}{$S_Z(a)$}] (X) at 
 (3.5, -2);
 
  \coordinate [label=left:\textcolor{black}{$z^0$}] (z0) at 
 (0,1.5);
 

 \coordinate [label=left:\textcolor{black}{${}$}](G) at (-6,.5);

\coordinate [label=left:\textcolor{black}{${}$}] (C) at (5,.5);

\coordinate (F) at (4,2.5);

\coordinate (H) at (-5 ,2.5);

 \draw (G)-- (H);
 
 \draw (C)--(F);


\shadedraw [color=black, opacity = .5]  (-7,-.25)--(6,-.25)--(4.5,3)--(-5.5,3)--(-7,-.25);


 \draw[->, shorten <=1pt,shorten >=1pt](0,0)--(0, 1.5);
 
 
\draw[black,fill=black] (0,0) circle (.5ex);

\draw[black,fill=black] (0,1.5) circle (.5ex); 

 
 \shadedraw [color=black, dashed, opacity=0.3] (0,1.5) ellipse (2.5  and 1);
 
 \draw[thick] (0,1.5) [partial ellipse=150:370: 2.5 and 1];

 
 \draw[shorten <=1pt,shorten >=1pt](0,1.5)--(1, 0.5);


 \coordinate [label=left:\textcolor{black}{$a_{z^0}$}] (az0) at 
 (1.4, 1 );
 
      \coordinate [label=left:\textcolor{black}{$z^0$}] (z0) at 
 (0,1.5);

 
 \coordinate [label=left:\textcolor{black}{$Z$}] (cL) at 
 (6, -3.5);

\end{tikzpicture}

}
\caption{The affine subspace $Q^{-1}(w^0)\subset Z$ slices the sphere $S_Z(a)$ in a `circle' with center $z^0$ and radius $a_{z^0}$.}
\label{F:fullfig2}
\end{figure}

To disintegrate
$$\int_{S_Z(a)\cap Q^{-1}(w^0)}f\,d\sigma$$
we  write this as an integral over the sphere of radius $a_{z^0}$ in $V=\ker Q$:
\begin{equation}\label{E:fSZQi}
\int_{S_Z(a)\cap Q^{-1}(w^0)}f\,d\sigma=\int_{S_V(a_{{z^0}})}f(z+{z^0})\,d\sigma(z),
\end{equation}
which we see by observing that
\begin{equation}\label{E:SZV}
\begin{split}
S_V(a_{{z^0}})+{z^0}&=\{v+{z^0}\,:\, v\in \ker Q, \norm{v}^2 = a^2-\norm{{z^0}}^2\}\\
&= \{v+{z^0}\,:\, Q(v+{z^0})=w^0 , \norm{v+{z^0}}^2= a^2 \}\\
&= S_Z(a)\cap Q^{-1}(w_0),
\end{split}
\end{equation}
where in the second equality we used the orthogonality (\ref{E:kerQvz}). 
 
Applying the disintegration formula (\ref{E:disintfsigBkLS}) for $\cl$ in (\ref{E:fSZQi}) we obtain:
\begin{equation}\label{E:disintgenslice2}
\begin{split}
& \int_{S_Z(a)\cap Q^{-1}(w^0)}f\,d\sigma \\
&=\int_{y\in D_0}  \left\{\int_{S_V(a_{{z^0}})\cap {L}^{-1}(y)}f(z+z_0)\,d\sigma(z)\right\}\frac{a_{{z^0}} } {\sqrt{a_{{z^0}}^2 -  \norm{L_0^{-1}y}^2  }}\,\frac{dy}{|\det L_0 |},
\end{split}
\end{equation}
where $D_0$ is the set of all $y\in {\rm Im}(L_0)={\cl}(\ker Q)$ for which  $\norm{L_0^{-1}y}<a$. Changing variables by translation with $y=x-x^0$, we then have
\begin{equation}\label{E:disintgenslice3}
\begin{split}
 \int_{S_Z(a)\cap Q^{-1}(w^0)}f\,d\sigma = 
& \int_{x\in D} I(x)     \frac{a_{{z^0}} } {\sqrt{a_{{z^0}}^2 -  \norm{L_0^{-1}(x-x^0)}^2  }}\,\frac{dx}{|\det L_0 |},
\end{split}
\end{equation}
where $D=D_0+x^0$ and
\begin{equation}\label{E:disintgenslice4}
\begin{split}
I(x) &=  \int_{S_V(a_{{z^0}})\cap {L}^{-1}(x-x^0)}f(z+z_0)\,d\sigma(z)\\
&= \int_{[S_V(a_{{z^0}})\cap {L}^{-1}(x-x^0)]  +z^0}f(z)\,d\sigma(z).
\end{split}
\end{equation}
Now
 \begin{equation}\label{E:interscirc}
 \begin{split}
 [S_V(a_{{z^0}})\cap {L}^{-1}(x-x^0)]+{z^0} &=[S_V(a_{{z^0}})+z^0]\cap {\cl}^{-1}(x),
 \end{split}
 \end{equation}
because a point $p$ lies in the right hand side if and only if $p=p^0+z^0$, where $p^0\in S_V(a_{{z^0}})$ and ${\cl}(p^0)=x-{\cl}(z^0)=x-x^0$. Then, using (\ref{E:SZV}), we have
\begin{equation}\label{E:interscirc2}
 \begin{split}
 [S_V(a_{{z^0}})\cap {L}^{-1}(x-x^0)]+{z^0} & = S_Z(a)\cap Q^{-1}(w_0)\cap {\cl}^{-1}(x).
 \end{split}
 \end{equation}
Hence
\begin{equation}
I(x) = \int_{ S_Z(a)\cap Q^{-1}(w_0)\cap {\cl}^{-1}(x)}  f(z)\,d\sigma(z).
\end{equation}
Using this value of $I$ in (\ref{E:disintgenslice3}) gives us the desired disintegration formula  (\ref{E:disintgenslice}).  \end{proof} 

The left side of (\ref{E:interscirc}) is the translate by $z^0$ of the intersection of the sphere $S_V(a_{z^0})$ with the affine subspace $L^{-1}(x-x^0)$. The point on this affine subspace closest to the origin is  $L_0^{-1}(x-x^0)$ (by our note in (\ref{E:zz0norm})). Therefore, the radius of the ``circle'' of intersection is given by:
\begin{equation}\label{E:rad}
{\rm radius\,of\,} \left[S_V(a_{{z^0}})\cap {L}^{-1}(x-x^0)\right]=\sqrt{a_{z^0}^2- \norm{ L_0^{-1}(x-x^0)}^2}.
\end{equation}
Hence the same is true of the translate of this circle by $z^0$:
\begin{equation}\label{E:rad2}
{\rm radius\,of\,} \left[S_Z(a)\cap Q^{-1}(w_0)\cap {\cl}^{-1}(x)\right]=\sqrt{a_{z^0}^2- \norm{ L_0^{-1}(x-x^0)}^2}.
\end{equation}

\subsection{Disintegration of slices expressed in coordinates}
Now let us work out some details of the disintegration of slices formula (\ref{E:disintgenslice}).   We apply Theorem \ref{T:slicedisint}   with $Z=\mbr^{d+1}$, $X=\mbr^k$, where $0<k<d$,  and with $\cl$ being the projection ${\cl}=P_{(k)}:\mbr^{d+1}\to\mbr^k$.

  Suppose $u_1,\ldots, u_m$ form an orthonormal basis of $(\ker Q)^\perp$. Then
 $$\hbox{$Qu_1,\ldots, Qu_m$ form a basis of $W={\rm Im}(Q)$.}
$$
Thus
\begin{equation}\label{E:Qz}
Qz=\la z, u_1\ra Qu_1+\ldots +\la z, u_m\ra Qu_m \quad\hbox{for all $z\in (\ker Q)^\perp$.}
\end{equation}
As before, let
$$V=\ker Q,$$
and $L$ the restriction of the projection ${\cl}$ to $V$:
\begin{equation}\label{E:defLdk}
L:V\to X=\mbr^k: z\mapsto z_{(k)} \stackrel{\rm def}{=} (z_1,\ldots, z_k).
\end{equation}
Let us note that  
$$\ker {\cl}=\{0\}\oplus {\mbr}^{d+1-k},$$
and, of more interest,
\begin{equation}\label{E:kerL}
\begin{split}
\ker L&=\{(0,y)\in\mbr^{d+1}\,:\, \la y, (u_1)_{(k)'}\ra =0,  \ldots, \la y, (u_m)_{(k)'}\ra=0 \}\\
&=\{0\}\oplus \left[\hbox{span of $(u_1)_{(k)'},\ldots, (u_m)_{(k)'}$}\right]^\perp
\end{split}
\end{equation}
where $(u_i)_{(k)'} = ((u_i)_{k+1}, \ldots, (u_i)_{d+1}) \in \mathbb{R}^{d+1-k}$. 
The space $(\ker L)^\perp$ consists of all $z\in \ker Q$ that are orthogonal to the subspace $\ker L$. Thus a vector $z=(x,y)\in \mbr^{d+1}$ lies in $(\ker L)^\perp$ if and only if   $z\in \ker Q$ and the component $x$ is unrestricted but the component $y$ is  orthogonal to $ \left[\hbox{span of $(u_1)_{(k)'},\ldots, (u_m)_{(k)'}$}\right]^\perp$:
\begin{equation}\label{E:kerLperp}
(\ker L)^{\perp}=\{z=(x,y)\in \mbr^k\oplus  \left[\hbox{span of $(u_1)_{(k)'},\ldots, (u_m)_{(k)'}$}\right]\,:\, z\in \ker Q\}.
\end{equation}
Thus    $(\ker L)^{\perp}$  consists of all elements of $\mbr^{d+1}$ of the form
$$(x,0)+(0,c_{ 1}(u_1)_{(k)'}+\ldots +c_{m}(u_m)_{(k)'})$$
that are orthogonal to $u_1,\ldots, u_m$:
\begin{equation}
\la (u_a)_{(k)}, x\ra +\sum_{b= 1}^m  \la (u_a)_{(k)'}, (u_b)_{(k)'}\ra  c_b=0
\end{equation}
for $a\in\{1,\ldots, m\}$. These $m$ equations yield a solution for $(c_1,\ldots, c_m)$:
\begin{equation}\label{E:vecc}
c=U^{-1}{\vec x}
\end{equation}
where $c=(c_1,\ldots, c_m)$, the linear mapping
$$U:\mbr ^m\to \mbr^m$$
has matrix
$$[\la (u_a)_{(k)'}, (u_b)_{(k)'}\ra],$$
and
$${\vec x}=(\la (u_1)_{(k)}, x\ra,\ldots, \la  (u_m)_{(k)}, x\ra)\in\mbr^m.$$
The mapping $L$ restricted to $(\ker L)^\perp$ is given by
\begin{equation}\label{E:LXkerP}
\begin{split}
L_0:(\ker L)^{\perp}  &\to X={\mbr}^k\\
  (x, c_{ 1}(u_1)_{(k)'}+\ldots +c_{m}(u_m)_{(k)'}) & \mapsto x.
\end{split}
\end{equation}
The inverse of this mapping is given by
\begin{equation}\label{E:LXinv}
L_0^{-1}x=\left (x, c_{ 1}(u_1)_{(k)'}+\ldots +c_{m}(u_m)_{(k)'}\right),
\end{equation}
where $(c_1,\ldots, c_m)$ is given by (\ref{E:vecc}).

Next we work out the adjoint ${L_0}^*$. For any $z\in (\ker L)^\perp$, which is the subspace of $\ker Q$ orthogonal to $\ker Q\cap \ker {\cl}$,  we have 
\begin{equation}
\label{E:LXstarm}
\begin{split}
\la L_0^*x, z\ra &=\la x, {L_0}z\ra\\
&= \la x, z_{(k)} \ra\\
&=\la (x,0), z\ra\\
&=\la (x,0) ,  P_{\ker Q}z\ra\\
&=\la P_{\ker Q}(x,0), z\ra\\
&=\la (I-P_{(\ker Q)^\perp})(x,0), z\ra \\
&=\left\langle (x,0) -\sum_{a=1}^m \la x, (u_a)_{(k)}\ra   u_a, z\right\rangle.
\end{split}
\end{equation}
The element 
\begin{equation}\label{E:1minQ}
P_{\ker Q}(x,0) =(x,0) -\sum_{a=1}^m \la x, (u_a)_{(k)}\ra   u_a
 \end{equation}
 lies in  $\ker Q$ and is also in the subspace 
\begin{equation}\label{E:kerQ}
\mbr^k\oplus  \left[\hbox{span of $(u_1)_{(k)'},\ldots, (u_m)_{(k)'}$}\right].
\end{equation}
Thus it is in $(\ker L)^\perp$. Hence
\begin{equation}\label{E:LXstarx}
L_0^*x= P_{\ker Q}(x,0) =(x,0) -\sum_{a=1}^m \la x, (u_a)_{(k)}\ra   u_a.
\end{equation}
From this and the fact that $L_0$ is just the projection onto the first $k$ coordinates, we have
\begin{equation}\label{E:LXstarLX}
{L_0L_0^*}x= x-\sum_{a=1}^m \la x, (u_a)_{(k)}\ra   (u_a)_{(k)}.
\end{equation}
For future reference let us rewrite this in different notation:
\begin{equation}\label{E:L0L0star1}
\begin{split}
L_0L_0^*x= x-\sum_{a=1}^m\la P_{(k)}^*x, u_a\ra P_{(k)}u_a &=\left(I-\sum_{a=1}^mP_{(k)}P_{u_a}P_{(k)}^* \right)x\\
&=\left(I-P_{(k)}P_{(\ker Q)^\perp}P_{(k)}^*\right)x\\
&=P_{(k)}P_{\ker Q}P_{(k)}^*x
\end{split}
\end{equation}
where 
$$P_{(k)}:\mbr^{d+1}\to \mbr^k: z\mapsto z_{(k)}$$
is the projection onto the first $k$ coordinates.

Now let
\begin{equation}\label{E:Pka}
P_{k,a}:\mbr^k\to\mbr^k : x\mapsto \la x, \widehat{(u_a)_{(k)}}\ra \widehat{(u_a)_{(k)}}
\end{equation}
be the orthogonal projection onto the ray spanned by $(u_a)_{(k)}$, assumed to be nonzero. Then
\begin{equation}\label{E:LXstarLX2}
{L_0L_0^*}=I-\sum_{a=1}^m \norm{(u_a)_{(k)}}^2P_{k,a}.
\end{equation}

Now recall the disintegration formula (\ref{E:disintgenslice}):
\begin{equation}\label{E:disintgensliceb}
\begin{split}
&\int_{S^d(a)\cap Q^{-1}(w^0)}f\,d\sigma\\
&=\int_{x\in D}\left\{\int_{S^d(a)\cap Q^{-1}(w^0)\cap {\cl}^{-1}(x)}f\,d\sigma\right\}\frac{a_{{z^0}}}{\sqrt{a_{{{z^0}}}^2 -  \norm{L_0^{-1}(x-{x^0})}^2  }}\,\frac{dx}{|\det L_0 |}
\end{split}
\end{equation}
where 
\begin{equation}\label{E:x0}
{x^0}={\cl}{z^0},
\end{equation}
and
\begin{equation}\label{E:defDL2}
D=x^0+\{y\in {\cl}\bigl(\ker Q\bigr)\,:\,    \norm{L_0^{-1}(y)}\leq a_{{{z^0}}}\}.
\end{equation}
We have now both a way to compute ${L_0}^{-1}$, given in (\ref{E:LXinv}), and an expression for the determinant factor:
\begin{equation}\label{E:detLXfactpr}
|\det L_0| =\sqrt{\det\left(I-\sum_{a=1}^m \norm{(u_a)_{(k)}}^2P_{k,a}\right)}.
\end{equation}

\subsection{Integrals of functions on subspaces}
We consider now a function $f$  on $\mbr^{d+1}=\mbr^k\oplus \mbr^{d+1-k}$ that depends only on the first $k$ components:
$$f(x,y)=\phi(x).$$
We denote by $P_{(k)}$ the projection onto the first $k$ coordinates:
\begin{equation}
P_{(k)}z=z_{(k)}=(z_1,\ldots, z_k)\in\mbr^k.
\end{equation}
For   convenience let us assume that $P_{(k)}$ maps $\ker Q$ onto $\mbr^k$. 
Then, applying the disintegration formula of Theorem \ref{T:slicedisint}, we have
\begin{equation}\label{E:disintgenslice3ca}
\begin{split}
&\int_{S^d(a)\cap Q^{-1}(w^0)}f\,d\sigma\\
&=\int_{x\in D}
\phi(x) V_a(x)
\frac{a_{{{z^0}}}}{\sqrt{a_{{z^0}}^2 -  \norm{L_0^{-1}(x-{x^0})}^2  }}\,\frac{dx}{|\det L_0 |}
\end{split}
\end{equation}
wherein   $D$ is the set of all $x\in\mbr^k$ for which the term under $\sqrt{\ldots}$ is positive, and
\begin{equation}
V_a(x)={\rm Vol}\left(S^d(a)\cap Q^{-1}(w^0)\cap P_{(k)}^{-1}(x) \right),
\end{equation}
 is the volume of the $(d-m-k)$-dimensional sphere of 
radius  given by (\ref{E:rad2}): 
\begin{equation}\label{E:radslice}
\sqrt{a_{{{z^0}}}^2- \norm{L_0^{-1}(x-{x^0})}^2}.
\end{equation}
Using (\ref{E:LXstarLX2}) we have
\begin{equation}\label{E:LXinvsq}
\norm{L_0^{-1}w^0}^2=\la ({L_0L_0^*})^{-1}w^0,w^0\ra  =\left\langle \left(I-\sum_{a=1}^m \norm{(u_a)_{(k)}}^2P_{k,a}\right)^{-1}w^0, w^0\right\rangle.
\end{equation}
The volume, or  `surface area', in the integrand on the right in (\ref{E:disintgenslice3ca}) is therefore:
\begin{equation}\label{E:volsphere}
\begin{split}
& {\rm Vol}\left(S^d(a)\cap Q^{-1}(w^0)\cap {P_{(k)}}^{-1}(x) \right)\\
&=   c_{d-k-m} \left[a_{{{z^0}}}^2- \norm{L_0^{-1}(x-{x^0})}^2 \right]^{\frac{d-k-m}{2}}
 \end{split}
\end{equation}
where $c_{d-k-m}$ is the surface measure of the $(d-k-m)$-dimensional sphere given, for all $j$, by the formula:
\begin{equation}\label{E:cjsurf}
c_{j}= 2\frac{\pi^{\frac{j+1}{2}}}{\Gamma\left(\frac{j+1}{2}\right)}.
\end{equation}
We can then rewrite (\ref{E:disintgenslice3ca}) as
\begin{equation}\label{E:disintgenslice3b}
\begin{split}
 \int_{S^d(a)\cap Q^{-1}(w^0)}f\,d\sigma
&=c_{d-k-m}\int_{x\in D}
I'(x)\,\frac{dx}{|\det L_0 |},
\end{split}
\end{equation}
where
\begin{equation}\label{E:sliceIN}
I'(x)= \phi(x)a_{z^0}    \left[a_{{{z^0}}}^2- \norm{L_0^{-1}(x-{x^0})}^2 \right]^{\frac{d-k-m-1}{2}}.
\end{equation}
The sphere $S^d(a)\cap Q^{-1}(w^0)$ has dimension $d-m$ and its volume (``surface area'') is
$$c_{d-m}a_{z^0}^{d-m}.$$
So, using the {\em normalized surface measure} $\ovs$ on the sphere $S^d(a)\cap Q^{-1}(w^0)$, we have
\begin{equation}\label{E:disintgenslice3c}
\begin{split}
 \int_{S^d(a)\cap Q^{-1}(w^0)}f\,d\ovs
&=\frac{c_{d-k-m}}{c_{d-m}a_{z^0}^{d-m} } \int_{x\in D}
I'(x)\,\frac{dx}{|\det L_0 |},
\end{split}
\end{equation}
where $I'(x)$ is as in (\ref{E:sliceIN}).

\section{Limit of spherical integrals}

In this section we prove  Theorem \ref{T:RadnonNlim} (expressing the limit of spherical integrals as Gaussian integration on an affine subspace)  by using the spherical disintegration formula (\ref{E:kerLperp}).

Let $\phi$ be a bounded measurable function on $\mbr^k$. Then, for any $d>k$ we have the function
  $f$  on $\mbr^{d+1}=\mbr^k\oplus \mbr^{d+1-k}$ that depends only on the first $k$ components:
$$f(x,y)=\phi(x).$$
In fact, identifying $\mbr^k$ with the subspace  $\mbr^k\oplus\{0\}$ of $l^2$, we have the function $f$ on $l^2$:
$$f(x,0,0,\ldots)=\phi(x).$$
Let
$$Q:l^2\to\mbr^m $$
be a continuous linear surjection, and  $u_1,\ldots, u_m$   an orthonormal basis of $(\ker Q)^\perp$. The point ${z^0}\in \mbr^N$ closest to $Q^{-1}(w^0)$, where $w^0\in \mbr^m$, is
\begin{equation}
z^0=Q^*(QQ^*)^{-1}(w^0),
\end{equation}
as we have seen before in the context of (\ref{E:kerQvz}). 
Let
\begin{equation}
(\mbr^k\oplus\{0\}) \cap \ker Q =\{z\in \ker Q\,:\, z_{k+1}=z_{k+2}=\ldots =0\}.
\end{equation}

 Our goal in this section is the following result.

   \begin{theorem}\label{T:limintfsig} Consider an affine subspace of $l^2$ given by $Q^{-1}(w^0)$, where $Q:l^2\to W$ is a linear surjection onto a finite-dimensional inner-product space $W$. 
 Suppose that the projection  $P_{(k)}: l^2\to \mbr^k: z\mapsto z_{(k)}$  maps $\ker Q$ onto $\mbr^k$. Let $S_{Z_N}(a)$ be the sphere of radius $a$ in the subspace $Z_N=\mbr^N\oplus\{0\}$ in $l^2$.  Let $\phi$ be a bounded Borel function on $\mbr^k$ and let $f$ be the function obtained by extending $\phi$ to $l^2$ by setting
   $$f(x)=\phi(x_1,\ldots, x_k)\qquad\hbox{for all $x\in l^2$.}$$
     Then
   \begin{equation}\label{E:disintgenslice0}
\begin{split}
&\lim_{N\to\infty} \int_{S_{Z_{N}}(\sqrt{N})\cap Q_N^{-1}(w^0)}f\,d{\bar\sigma}\\
&=(2\pi)^{-k/2} \int_{x\in \mbr^k}
\phi(x) \exp\left({- \frac{  \la ({L_{0} }{L_{0}^* })^{-1}(x-{z^{0}}_{(k)}), x-{z^{0}}_{(k)} \ra}{2} }\right)\,  \frac{dx}{  \sqrt{\det ({L_{0} L_{0} ^*})} },
\end{split}
\end{equation}
where $L_0$ is the restriction of the  projection $P_{(k)}$ to $\ker Q\ominus \ker  P_{(k)}$. 
           \end{theorem}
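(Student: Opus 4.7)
The plan is to apply the slice disintegration formula (\ref{E:disintgenslice3c}) with $d=N-1$ and $a=\sqrt{N}$, and then pass to the limit $N\to\infty$. Write $z^0_N=Q_N^{*}(Q_NQ_N^{*})^{-1}w^0\in\mbr^N$ for the point of $Q_N^{-1}(w^0)$ closest to $0$, $x^0_N=P_{(k)}(z^0_N)\in\mbr^k$, $a_N=\sqrt{N-\norm{z^0_N}^2}$, and let $L_{0,N}$ be the $N$-dimensional analogue of $L_0$ (the restriction of $P_{(k)}$ to $\ker Q_N\ominus\ker(P_{(k)}|\ker Q_N)$). Then (\ref{E:disintgenslice3c}) rewrites the left-hand side of (\ref{E:disintgenslice0}) as
\begin{equation*}
\alpha_N\int_{D_N}\phi(x)\left(1-\frac{\norm{L_{0,N}^{-1}(x-x^0_N)}^2}{a_N^2}\right)^{(N-2-k-m)/2}\frac{dx}{|\det L_{0,N}|},
\end{equation*}
where $\alpha_N=c_{N-1-k-m}/(c_{N-1-m}\,a_N^{k})$ and $D_N$ is the ellipsoid on which the bracket is positive. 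The problem reduces to (i) computing $\lim_N\alpha_N$, (ii) identifying the pointwise limit of the integrand, and (iii) justifying the limit-integral interchange.

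For (i), formula (\ref{E:cjsurf}) gives $c_{N-1-k-m}/c_{N-1-m}=\pi^{-k/2}\,\Gamma((N-m)/2)/\Gamma((N-m-k)/2)$, and the standard Stirling-ratio asymptotic $\Gamma((N-m)/2)/\Gamma((N-m-k)/2)\sim(N/2)^{k/2}$ combined with $a_N^{-k}\sim N^{-k/2}$ yields $\alpha_N\to(2\pi)^{-k/2}$. For (ii), take an orthonormal basis $w_1,\ldots,w_m$ of $(\ker Q)^\perp\subset l^2$ and let $u_a^{(N)}$ be the Gram--Schmidt orthonormalization of the truncations $(w_a)_{(N)}\in\mbr^N$; since $(w_a)_{(N)}\to w_a$ in $l^2$, one has $u_a^{(N)}\to w_a$, and in particular $(u_a^{(N)})_{(k)}\to(w_a)_{(k)}$ in $\mbr^k$. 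By formula (\ref{E:LXstarLX2}) this gives $L_{0,N}L_{0,N}^{*}\to L_0L_0^{*}$ as operators on $\mbr^k$, whence $|\det L_{0,N}|\to\sqrt{\det(L_0L_0^{*})}$ and $L_{0,N}^{-1}(x-x^0_N)\to L_0^{-1}(x-z^0_{(k)})$ for every fixed $x$; similarly $z^0_N\to z^0$ in $l^2$, so $x^0_N\to z^0_{(k)}$ and $a_N^2/N\to1$. Combined with the elementary limit $(1-t/N)^{N/2}\to e^{-t/2}$, the integrand converges pointwise in $x$ to $\phi(x)\exp\bigl(-\tfrac12\la (L_0L_0^{*})^{-1}(x-z^0_{(k)}),\,x-z^0_{(k)}\ra\bigr)/\sqrt{\det(L_0L_0^{*})}$, which is precisely the integrand on the right-hand side of (\ref{E:disintgenslice0}) after factoring out $(2\pi)^{-k/2}$.

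For (iii), note that since $L_0L_0^{*}$ is positive-definite and $L_{0,N}L_{0,N}^{*}\to L_0L_0^{*}$, the smallest eigenvalue of $L_{0,N}L_{0,N}^{*}$ stays bounded below by some $\lambda>0$ for $N$ large, and $\norm{x^0_N}$ is uniformly bounded. The inequality $\log(1-s)\le -s$ on $[0,1)$ yields $(1-t/N)_{+}^{(N-2-k-m)/2}\le e^{-ct}$ for $t\ge0$ and some $c>0$ independent of $N$ (all $N$ sufficiently large), so the full integrand is majorized by $C\|\phi\|_\infty\exp(-c'\norm{x}^2)$ with $c'>0$ and $C<\infty$ independent of $N$. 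Since $\mathbf{1}_{D_N}(x)\to1$ pointwise, dominated convergence delivers (\ref{E:disintgenslice0}). The main technical hurdle is exactly the construction of this uniform Gaussian majorant---controlling the spectrum of $L_{0,N}L_{0,N}^{*}$ from below and $x^0_N$ from above uniformly in $N$---but these are consequences of Step (ii) and thus routine.
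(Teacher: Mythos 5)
Your proposal is correct and follows the same overall skeleton as the paper---disintegrate via (\ref{E:disintgenslice3c}), compute $\lim_N\alpha_N=(2\pi)^{-k/2}$ from the Gamma-function asymptotics, identify the pointwise Gaussian limit of the integrand, and interchange limit and integral---but it diverges from the paper in two genuine ways. First, you establish $L_{0,N}L_{0,N}^*\to L_0L_0^*$ and $z^{0,N}\to z^0$ by explicit coordinates (Gram--Schmidt on the truncations $(w_a)_{(N)}$ together with formula (\ref{E:LXstarLX2})), whereas the paper routes this through the abstract projection-limit machinery of Proposition \ref{P:L0L0star}, Theorem \ref{T:limQN}, Lemma \ref{L:limproj}, and Proposition \ref{P:limz0NK}; your version is more computational but self-contained, while the paper's generalizes directly to the Hilbert/Banach setting of Theorem \ref{T:mainAWS}. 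Second, and more substantially, you justify the interchange in one stroke by a uniform Gaussian majorant $C\norm{\phi}_{\sup}e^{-c'\norm{x}^2}$ valid for all bounded Borel $\phi$, whereas the paper bounds the integrand only by $|\phi(x)|$, proves the theorem first for $\phi\in L^1(\mbr^k)$, and then extends to bounded $\phi$ via a tightness argument for the probability measures $\mu_N$. Your majorant does exist and your route is the more direct one; the only slip is in the attribution: the bound $(1-s)^{M}\le e^{-Ms}$ becomes a Gaussian in $x$ because $\norm{L_{0,N}^{-1}y}\ge\norm{y}$, which follows from $\norm{L_{0,N}}\le 1$ (i.e.\ the \emph{largest} eigenvalue of $L_{0,N}L_{0,N}^*$ being at most $1$, automatic since $L_{0,N}$ is the restriction of an orthogonal projection), not from the smallest eigenvalue being bounded below---the latter is what you need for invertibility of $L_{0,N}$, the uniform bound on $\norm{(L_{0,N}L_{0,N}^*)^{-1}}$, and the conclusion $1_{D_N}(x)\to 1$. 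With that attribution corrected, the argument is complete.
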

           As before, the notation $\ker Q\ominus \ker  P_{(k)}$ means the orthogonal complement of $\ker Q\cap \ker P_{(k)}$ within $\ker Q$. Thus $L_0$ is  the restriction of $z\mapsto z_{(k)}$ to the subspace of $\ker Q$ orthogonal to  $\ker Q\cap \ker P_{(k)}$.

           The remainder of this section is devoted to the proof of this result.  If $z\mapsto z_{(k)}$ maps $\ker Q$ onto a proper subspace  $X$ of $\mbr^k$ then (\ref{E:disintgenslice0}) holds with the integral being over the image $X$, and with $L_0L_0^*$ taken as a map $X\to X$.
 
\subsection{Approximating by finite-dimensional subspaces}

Let
\begin{equation}\label{E:JN1}
J_N: \mbr^N\oplus \{0\}\to l^2
\end{equation}
be the inclusion map. Then the adjoint
\begin{equation}\label{E:JN1st}
J_N^*: l^2\to \mbr^N\oplus \{0\} 
\end{equation}
is the orthogonal projection onto the subspace $\mbr^N\oplus\{0\}$, and the composition
\begin{equation}\label{E:QNl2}
 J_NJ_N^*:l^2\to l^2
\end{equation}
is the same orthogonal projection, but now viewed as an operator in $l^2$ whose range is the subspace 
$\mbr^N\oplus \{0\} $. Now let
\begin{equation}\label{E:QN1}
Q_N=QJ_N:\mbr^N\oplus\{0\}\to W.
\end{equation}
By Proposition   \ref{P:limHZN2}, $Q_N$ is surjective for large $N$. Then 
\begin{equation}\label{E:z0N}
z^{0,N}\stackrel{\rm def}{=}Q_N^*(Q_NQ_N^*)^{-1}(w^0)
\end{equation}
is the point on the affine subspace 
$$Q_N^{-1}(w^0)\subset \mbr^N\oplus\{0\}$$
that is closest to $0$.  We now show that $P_{(k)}z^{0,N}\to P_{(k)}z^0$, the point on $Q^{-1}(w^0)$ closest to $0$, as $N\to\infty$, where
\begin{equation}\label{E:Pk}
P_{(k)}:l^2\to X= \mbr^k:z \mapsto z_{(k)}=(z_1,\ldots, z_k).\end{equation}

\begin{prop}\label{P:limz0NK} Let $H$ be a   Hilbert space  and   $Q:H\to W$  a continuous linear surjection onto    a finite-dimensional   space $W$.  Suppose $Z_1\subset Z_2\subset\ldots$ are   finite-dimensional subspaces of $H$ such that $ \cup_{N\geq 1}Z_N$ is dense in $H$.    Then for any $w^0\in W$,
\begin{equation}\label{E:z0Nlim}
\lim_{N\to\infty}{\cl}z^{0,N}={\cl}z^0,
\end{equation}
where $z^{0,N}$ is the point on $Z_N\cap Q^{-1}(w^0)$ closest to $0$, and $z^0$ is the point on $Q^{-1}(w^0)$ closest to $0$, and ${\cl}:H\to X$ is a continuous linear mapping to any finite-dimensional space $X$.
\end{prop}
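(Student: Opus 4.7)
The plan is to establish the stronger statement that $z^{0,N}\to z^{0}$ in the norm topology of $H$; since ${\cl}$ is a bounded linear map to a finite-dimensional space $X$, the desired conclusion ${\cl}z^{0,N}\to {\cl}z^{0}$ will then follow immediately from continuity. The starting point is the closed-form expression for these closest points, derived exactly as in (\ref{E:p0}): one has $z^{0}=Q^{*}(QQ^{*})^{-1}w^{0}$ and, for $N$ large enough that Proposition \ref{P:limHZN2} guarantees $Q_{N}=Q|Z_{N}$ is surjective, $z^{0,N}=Q_{N}^{*}(Q_{N}Q_{N}^{*})^{-1}w^{0}$. The crucial reformulation is the identification $Q_{N}^{*}=P_{N}Q^{*}$, where $P_{N}:H\to H$ is the orthogonal projection onto $Z_{N}$ and $Q_{N}^{*}$ is viewed as a map into $Z_{N}\subset H$; this is a one-line adjointness calculation.

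From here the argument reduces to standard approximation theory on finite-dimensional targets. Density of $\bigcup_{N}Z_{N}$ in $H$ gives $P_{N}\to I$ strongly on $H$, hence $Q_{N}^{*}w=P_{N}Q^{*}w\to Q^{*}w$ in $H$ for every $w\in W$. Continuity of $Q$ then gives $Q_{N}Q_{N}^{*}w=QP_{N}Q^{*}w\to QQ^{*}w$ for every $w\in W$, and since $W$ is finite-dimensional this is convergence in operator norm on $W$. Because $QQ^{*}$ is invertible (as $Q$ is surjective), so is $Q_{N}Q_{N}^{*}$ for $N$ large, and $(Q_{N}Q_{N}^{*})^{-1}\to (QQ^{*})^{-1}$ in the operator norm on $W$.

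Setting $v_{N}=(Q_{N}Q_{N}^{*})^{-1}w^{0}$ and $v=(QQ^{*})^{-1}w^{0}$, one has $v_{N}\to v$ in $W$, and a triangle-inequality split
$$
\norm{z^{0,N}-z^{0}}=\norm{P_{N}Q^{*}v_{N}-Q^{*}v}\leq \norm{Q^{*}}\cdot\norm{v_{N}-v}+\norm{P_{N}Q^{*}v-Q^{*}v}
$$
shows that both summands tend to zero: the first by the convergence $v_{N}\to v$ in the finite-dimensional space $W$, the second by the strong convergence $P_{N}\to I$ applied to the fixed element $Q^{*}v\in H$. Hence $z^{0,N}\to z^{0}$ in $H$ and therefore ${\cl}z^{0,N}\to {\cl}z^{0}$ in $X$. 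There is no substantial obstacle in this argument; the only points requiring care are the adjoint identity $Q_{N}^{*}=P_{N}Q^{*}$ and the large-$N$ surjectivity of $Q_{N}$, the latter being supplied by the preceding Proposition \ref{P:limHZN2}.
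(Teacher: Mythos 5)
Your proof is correct, and its skeleton coincides with the paper's: both rest on the closed-form expressions $z^{0}=Q^{*}(QQ^{*})^{-1}w^{0}$ and $z^{0,N}=Q_{N}^{*}(Q_{N}Q_{N}^{*})^{-1}w^{0}$, the identity $Q_{N}Q_{N}^{*}=QP_{Z_{N}}Q^{*}$, the strong convergence $P_{Z_{N}}\to I$ supplied by the density hypothesis, and the finite-dimensionality of $W$ to upgrade pointwise convergence of $QP_{Z_{N}}Q^{*}$ to convergence in $\End(W)$ together with continuity of inversion there. The one genuine difference is in the final step: the paper only establishes the \emph{weak} convergence $z^{0,N}\to z^{0}$ (by computing $\la z, z^{0,N}\ra$ for arbitrary $z\in H$) and then invokes the finite-dimensionality of $X$ to convert weak convergence of ${\cl}z^{0,N}$ into genuine convergence, whereas your triangle-inequality split via $Q_{N}^{*}=P_{Z_{N}}Q^{*}$ yields \emph{norm} convergence $z^{0,N}\to z^{0}$ in $H$. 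This buys you a slightly stronger statement at no extra cost: the conclusion ${\cl}z^{0,N}\to{\cl}z^{0}$ then holds for any continuous linear ${\cl}$ into any normed space, with no finite-dimensionality assumption on the target $X$ (that hypothesis is still needed elsewhere, e.g.\ for the surjectivity of $Q_{N}$ via Proposition \ref{P:limHZN2}, but not for this limit).
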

\begin{proof} Let
\begin{equation}\label{E:defJN}
J_N:Z_N\to H
\end{equation}
be the inclusion map. We equip the finite-dimensional space $W$ with an inner-product. The adjoint
\begin{equation}\label{E:defJNstar}
J_N^*:H\to Z_N
\end{equation}
is the orthogonal projection onto the subspace $Z$. The composition
\begin{equation}\label{E:defPN}
P_{Z_N}=J_NJ_N^*:H\to H
\end{equation}
is again the orthogonal projection in $H$ whose range is the subspace $Z_N$.

 Then with
\begin{equation}\label{E:QN}
Q_N=QJ_N:Z_N\to W,
\end{equation}
we have 
\begin{equation}
Q_NQ_N^*:W\to W,
\end{equation}
which, for large $N$, is an isomorphism 
(Proposition \ref{P:limHZN2}). The point $z^{0,N}$  on $Q_N^{-1}(w^0)$ closest to the origin is given by 
\begin{equation}\label{E:z0N2}
z^{0,N}=Q_N^*(Q_NQ_N^*)^{-1}(w^0).
\end{equation}
For any $z\in H$, 
\begin{equation}\label{E:PkJN}
\begin{split}
\la z,  z^{0,N}\ra &=\la z, Q_N^*(Q_NQ_N^*)^{-1}w^0\ra\\
&=\la z, J_N^*Q^*(QJ_NJ_N^*Q^*)^{-1}w^0\ra\\
&=\la QJ_Nz, (QP_{Z_N}Q^*)^{-1}w^0\ra\\
&=\la Qz, (QP_{Z_N}Q^*)^{-1}w^0\ra.
\end{split}
\end{equation}
Since $\cup_{N\geq 1}Z_N$ is dense in $H$, Lemma \ref{L:incsub} shows that $P_{Z_N}\to I$ {\em pointwise}, as $N\to\infty$, and so, using continuity of $Q$ we have:
\begin{equation}
\lim_{N\to\infty}QP_{Z_N}Q^*y =QQ^*y \qquad\hbox{for all $y\in W$.}
\end{equation}
 {\em Since $W$ is finite-dimensional} this implies
 that
 \begin{equation}
\lim_{N\to\infty}QP_{Z_N}Q^*  =QQ^*  \quad\hbox{in the finite-dimensional space $\End(W)$.}
\end{equation}
Then by continuity of inverses in the open subset of $\End(W)$ consisting of invertible automorphisms we have
 \begin{equation}\label{E:invcont}
\lim_{N\to\infty}(QP_{Z_N}Q^*)^{-1}  =(QQ^*)^{-1}  \quad\hbox{in the finite-dimensional space $\End(W)$.}
\end{equation}
Returning to (\ref{E:PkJN}) we conclude:
 \begin{equation}\label{E:limPkJN}
\lim_{N\to\infty} \la z,z^{0,N}\ra = \la Qz, (QQ^*)^{-1}w\ra =\la z, z^0\ra.
\end{equation}
(Thus, $  z^{0,N} \to z^0$ as $N\to\infty$, as a weak limit.) If $\cl: H\to X$ is continuous linear and $X$ is finite-dimensional then for any $f\in X^*$ we have
 \begin{equation}\label{E:limPkJN2}
  \la f,{\cl}z^{0,N}\ra = \la {\cl}^*f, z^{0,N}\ra\to \la {\cl}^*f, z^0\ra=\la  f, {\cl}z^0\ra,
\end{equation}
which implies, since $X$ is finite-dimensional, that $ {\cl}z^{0,N}\to  {\cl}z^0$.
 \end{proof}

\subsection{The disintegration formula reorganized} We will apply the slice disintegration formula  in Theorem \ref{T:slicedisint} to the space
$$Z_N=\mbr^N\oplus \{0\}\subset l^2,$$
 with the linear mapping
$$Q_N=QJ_N:Z_N\to W,$$ 
 and with  the linear surjection ${\cl}:Z_N\to X$ being the projection
$${\cl}_{N,k}={P}_{(k)}J_N:Z_N\to X=\mbr^k: z\mapsto z_{(k)},$$
where $N>k$ and
\begin{equation}
P_{(k)}:l^2\to \mbr^k:z\mapsto z_{(k)}
\end{equation}
being the projection on the first $k$ components. We will assume that $Q_N$ is a surjection, by considering large $N$ (Proposition \ref{P:limHZN2}).   We apply Theorem \ref{T:slicedisint} to the sphere of radius
$$a=\sqrt{N}$$
in $Z_N$.  The dimension $d$ of the sphere $S_{Z_N}(a)$ is
\begin{equation}\label{E:dZN}
d=\dim Z_N-1=N-1.
\end{equation}

The disintegration formula (\ref{E:disintgenslice3c}), using  the {\em normalized volume measure} ${\bar\sigma}$ on the sphere, is:
\begin{equation}\label{E:disintgenslice4c2}
 \int_{S_{Z_N}(a)\cap Q_N^{-1}(w^0)}f\,d{\bar\sigma} =
\frac{  c_{d-k-m}}{a_{{z^{0,N}}}^{d-m}c_{d-m}} \int_{x\in D_N}
I'_N(x)\,\frac{dx}{|\det L_{0,N} |},
\end{equation}
where  \begin{equation}\label{E:defDL}
D_N=\{x\in \mbr^k:\,    \norm{L_{0,N}^{-1}(x- {z^{0,N}}_{(k)} )} < a_{{z^{0,N}}}\},
\end{equation}
and the integrand is
\begin{equation}\label{E:disintI}
I'_N(x)=\phi(x) 
 a_{{z^{0,N}}} \left\{a_{{z^{0,N}}}^2 -  \norm{{L_{0,N} }^{-1}(x- {z^{0,N}}_{(k)}   )}^2  \right\}^{\frac{d-k-m-1}{2}}.
 \end{equation}

Here $L_{0,N}$ is the restriction of the projection
$$P_{N,k}:Z_N\to\mbr^k:z\mapsto z_{(k)}$$
to  the subspace of $\ker Q_N$ that is the orthogonal complement  within $\ker Q_N$ of $\ker (P_{N,k}|\ker Q_N)$. Proposition \ref{P:L0N}, proven below, ensures that $L_{0,N}$ is is an isomorphism for large $N$ (see the note following the statement of Proposition \ref{P:L0N}).

 Our goal is to work out the limit of the right side of (\ref{E:disintgenslice4}) as $N\to\infty$.

\subsection{First steps towards the large-$N$ limit}
Factoring $a_{{z^{0,N}}}$ out of the various terms on the right  in (\ref{E:disintgenslice4}) leads to 
\begin{equation}\label{E:disintgenslice4b}
 \int_{S_{Z_N}(a)\cap Q_N^{-1}(w^0)}f\,d{\bar\sigma} =
\frac{  c_{d-k-m}}{a_{{z^{0,N}}}^{ k}c_{d-m}} \int_{\mbr^k}
I_N(x)\,\frac{dx}{|\det L_{0,N} |},
\end{equation}
where
\begin{equation}\label{E:IN2}
I_N(x)=  \phi(x) 
  \left\{	1 - a_{{z^{0,N}}}^{-2} \norm{{L_{0,N} }^{-1}(x-{z^{0,N}}_{(k)})}^2  \right\}^{\frac{d-k-m-1}{2}}1_{D_N}(x)\end{equation}
  and the set $D_N$ is, as before, comprised of all points $x$ for which the term within $\{\ldots\}$ is  non-negative.
 Before analyzing the integrand let us work out the limit of the constant term outside the integral on the  right side of (\ref{E:disintgenslice4b}).

Let $N_0$ be a value of $N$ for which $Q_N$ is surjective and $P_{N,k}(\ker Q_N)=\mbr^k$. Then for $N\geq N_0$,
\begin{equation}\label{E:az0Nineq}
\left(N-\norm{z^{0,N_0}}^2\right)^{k/2}\leq a_{z^{0,N}}^k \leq \left(N-\norm{z^0}\right)^{k/2},
\end{equation}
because the point $z^{0,N}$ is at most as far from the origin as $z^{0,N_0}$  and at least as far as $z^0$. Then,  with $c_j$ as given in (\ref{E:cjsurf}), we have, as $N\to\infty$, 
\begin{equation}\label{E:constlim}
\begin{split}\frac{  c_{d-k-m}}{a_{{z^{0,N}}}^{k}c_{d-m}} &\sim
\frac{  c_{N-1-k-m}}{N^{\frac{k}{2} }c_{N-1-m}}  =\frac{\pi^{\frac{N-k-m}{2}} }{ N^{\frac{k}{2}} \Gamma\left(\frac{N-k-m}{2}  \right) }\frac{ \Gamma\left(\frac{N- m}{2}\right) }{  \pi^{\frac{N- m}{2}}    } \\
&= \pi^{-k/2} \frac{ \Gamma\left( \frac{N-m}{2}\right) }{ N^{\frac{k}{2} }\Gamma\left( \frac{N-k-m}{2}\right) } \\
&\sim  \pi^{-k/2} \frac{1}{ N^{\frac{k}{2} }  }\left(\frac{N-k-m}{2}\right)^{k/2}\\
&\sim  (2\pi)^{-k/2}.
\end{split}
\end{equation}

\subsection{The large-$N$ limit of the determinant} 
 
We use the notation $X\ominus Y$ to mean the orthogonal complement of $Y\cap X$ within $X$, where $X$ and $Y$ are closed subspaces of a Hilbert space. We have seen the following result earlier in (\ref{E:L0L0star1}) in the context of $H=l^2$.
 \begin{prop}\label{P:L0L0star}
 Let $H$, $W$, and $X$ be    Hilbert spaces, and $Q:H\to W$  and ${\cl}:H\to X$  be continuous linear mappings. Let $L$ be the restriction of $\cl$ to $\ker Q$ and $L_0$ the restriction of $L$  to the orthogonal complement of $\ker L$ within $\ker Q$:
\begin{equation}\label{E:defLL0}
L={\cl}|\ker Q\qquad\hbox{and}\qquad L_0={\cl}|(\ker Q\ominus\ker\cl).
\end{equation}
 Then 
  \begin{equation}\label{E:L0L0}
  L_0L_0^*=   {\cl}P_{\ker Q}{\cl}^*,
  \end{equation}
  where $ P_{\ker Q}$  is   the orthogonal projection in $H$ onto the subspace $\ker Q$. Here the adjoint $L_0^*$ has domain $X$ and codomain $\ker Q\ominus\ker\cl$.
  \end{prop}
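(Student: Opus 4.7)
The plan is to factor $L_0$ through an inclusion and translate the claim into an identity of composed operators. Write $\iota: \ker Q \ominus \ker \cl \hookrightarrow H$ for the inclusion of the closed subspace into $H$; then $L_0 = \cl \circ \iota$. The adjoint of an inclusion is the orthogonal projection onto the subspace, so $\iota^* = P_{\ker Q \ominus \ker \cl}$, and therefore
\begin{equation*}
L_0^* = \iota^* \cl^* = P_{\ker Q \ominus \ker \cl}\,\cl^* : X \to \ker Q \ominus \ker\cl,
\end{equation*}
which also confirms the domain/codomain assertion. Consequently
\begin{equation*}
L_0 L_0^* = \cl\, P_{\ker Q \ominus \ker \cl}\, \cl^*.
\end{equation*}

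Next, the point is to replace $P_{\ker Q \ominus \ker \cl}$ by $P_{\ker Q}$ in the displayed identity. Since $\ker Q \cap \ker \cl$ is a closed subspace of the closed subspace $\ker Q \subset H$, we have the orthogonal decomposition within $\ker Q$,
\begin{equation*}
\ker Q = (\ker Q \cap \ker \cl) \oplus (\ker Q \ominus \ker \cl),
\end{equation*}
and correspondingly in $H$,
\begin{equation*}
P_{\ker Q} = P_{\ker Q \cap \ker \cl} + P_{\ker Q \ominus \ker \cl}.
\end{equation*}

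Now observe that $\cl$ vanishes on $\ker Q \cap \ker \cl$ (by definition of that intersection), so $\cl\, P_{\ker Q \cap \ker \cl} = 0$ as an operator from $H$ to $X$; hence also $\cl\, P_{\ker Q \cap \ker \cl}\, \cl^* = 0$. Substituting this into the decomposition gives
\begin{equation*}
\cl\, P_{\ker Q}\, \cl^* = \cl\, P_{\ker Q \ominus \ker \cl}\, \cl^* = L_0 L_0^*,
\end{equation*}
which is (\ref{E:L0L0}).

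The reasoning is essentially formal once the decomposition of $P_{\ker Q}$ is in place; the only subtlety, which I would flag briefly, is the verification that $\iota^* = P_{\ker Q \ominus \ker \cl}$ when $\iota$ is interpreted as a bounded operator into $H$ (as opposed to a map between different Hilbert spaces whose adjoint is the identity on the subspace). The cleanest way is to note that for $z \in H$ and $y \in \ker Q \ominus \ker\cl$ one has $\langle \iota y, z\rangle_H = \langle y, z\rangle_H = \langle y, P_{\ker Q \ominus \ker\cl} z\rangle_H$, giving $\iota^* z = P_{\ker Q \ominus \ker\cl}z$. With that in hand the rest is immediate.
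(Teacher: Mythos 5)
Your proof is correct, and its first half is identical to the paper's: both factor $L_0$ as $\cl$ composed with the inclusion of $\ker Q\ominus\ker\cl$ into $H$, identify the adjoint of the inclusion as the orthogonal projection, and arrive at $L_0L_0^*=\cl\,P_{\ker Q\ominus\ker\cl}\,\cl^*$. The two arguments part ways only in how they convert $P_{\ker Q\ominus\ker\cl}$ into $P_{\ker Q}$. The paper works pointwise: it checks that $P_{\ker Q}\cl^*x$ is orthogonal to every $v\in\ker Q\cap\ker\cl$, concludes that this vector already lies in $\ker Q\ominus\ker\cl$, and then invokes the nesting principle that a projection onto a larger subspace agrees with the projection onto a smaller one whenever its output happens to land there; this actually yields the stronger pointwise identity $P_{\ker Q}\cl^*x=P_{\ker Q\ominus\ker\cl}\cl^*x$. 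You instead use the operator decomposition $P_{\ker Q}=P_{\ker Q\cap\ker\cl}+P_{\ker Q\ominus\ker\cl}$ and kill the first summand with $\cl\,P_{\ker Q\cap\ker\cl}=0$, which proves only the equality after composing with $\cl$ on the left --- but that is all the proposition needs. Your version is arguably cleaner and more symmetric; the paper's version delivers the extra pointwise fact about where $P_{\ker Q}\cl^*x$ lives, which is in the same spirit as the observation (\ref{E:zz0norm}) used elsewhere. Either way the argument is complete, and your flagged verification that $\iota^*=P_{\ker Q\ominus\ker\cl}$ is exactly the point the paper also takes care to record.
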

  \begin{proof} Let
  $$J:D(L_0)\to H$$
  be the inclusion map, where 
  $$D(L_0)=\ker Q\ominus\ker\cl$$
  is the domain of $L_0$. The adjoint $J^*$ is the orthogonal projection of $H$ onto the subspace $D(L_0)$. Then
  $$JJ^*:H\to H$$
  is the orthogonal projection in $H$ with image being the subspace $D(L_0)$. Next we note that
  \begin{equation}\label{E:L0D0}
  L_0={\cl}J,
  \end{equation}
  and so
  \begin{equation}\label{E:JJstar}
  L_0L_0^* = {\cl} P_{\ker Q \ominus \ker\cl}{\cl}^*.
  \end{equation}
   Now for any $x\in X$ the element $ P_{\ker Q}{\cl}^*x$ is in $\ker Q$ and is orthogonal to all $v\in  \ker {\cl} \cap \ker Q$ because 
  \begin{equation}
  \la  P_{\ker Q}{\cl}^*x, v \ra =\la x, {\cl}P_{\ker Q}v\ra = \la x, {\cl}v\ra=\la x, 0\ra =0.
  \end{equation}
  Thus
  \begin{equation}
  P_{\ker Q}{\cl}^*x\in \ker Q \ominus \ker \cl.
  \end{equation}
  In general if the projection of a vector $v$ onto a subspace $X$ actually lies in a subspace $Y\subset X$ then $P_Xv$, being the point on $X$ closest to $v$ is also the point on $Y$ closest to $v$, and so $P_Xv=P_Yv$. Thus
  \begin{equation}
  P_{\ker Q}{\cl}^*x=P_{\ker Q\ominus \cl}\cl^*x.
  \end{equation}
  Hence, using (\ref{E:JJstar}), we have
   \begin{equation}\label{E:L0L0star}
  L_0L_0^* = {\cl} P_{\ker Q}{\cl}^*.
  \end{equation}
\end{proof} 
      
      We can now determine the limit of $ L_{0,N}L_{0,N}^*$ as $N\to\infty$.

\begin{prop}\label{P:L0N}  Let $H$ be a   Hilbert space, and $Z_1\subset Z_2\subset \ldots$ a sequence of finite-dimensional subspaces of $H$ whose union is dense in $H$. Let $Q:H\to W$  and ${\cl}:H\to X$ be   surjective continuous linear functions, where $X$ and $W$ are finite-dimensional Hilbert spaces, and ${\cl}_N$ and $Q_N$ their restrictions to $Z_N$:
$$Q_N=Q|Z_N\qquad\hbox{and}\qquad \cl_N=\cl|Z_N.$$
Suppose ${\cl}$ maps $\ker Q$ surjectively onto $X$.   Then:
\begin{itemize}
\item[(i)] ${\cl}_N$ maps $\ker Q_N$ surjectively onto $X$ for large $N$; \item[(ii)] the operators $L_{0,N}L_{0,N}^* $ on $W$ converge to $L_0L_0^*$:
\begin{equation}\label{E:L0N}
\lim_{N\to\infty} L_{0,N}L_{0,N}^*  =  L_0L_0^*,
\end{equation}  
where $L_0$ is the restriction of $\cl$ to  $\ker Q\ominus \ker\cl$, the orthogonal complement of $\ker Q\cap \ker {\cl}$ within $\ker Q$,  and $L_{0,N}$ is the restriction $\cl$ to $\ker Q_N\ominus \ker {\cl}_N$.
\end{itemize}
\end{prop}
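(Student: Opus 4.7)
The plan is to reduce the problem to the identity $L_{0,N}L_{0,N}^* = {\cl}\,P_{\ker Q_N}^H\,{\cl}^*$, viewed as operators on the finite-dimensional space $X$, and then exploit strong convergence of the $H$-orthogonal projections onto the nested subspaces $\ker Q_N$.

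First, I would apply Proposition \ref{P:L0L0star} both to the data $(H,Q,{\cl})$ and to $(Z_N,Q_N,{\cl}_N)$, yielding
\begin{equation*}
L_0L_0^* = {\cl}\,P_{\ker Q}\,{\cl}^* \qquad\hbox{and}\qquad L_{0,N}L_{0,N}^* = {\cl}_N\,P_{\ker Q_N}^{Z_N}\,{\cl}_N^*,
\end{equation*}
where $P_{\ker Q_N}^{Z_N}$ denotes the orthogonal projection inside $Z_N$. With $J_N:Z_N\to H$ the inclusion (so ${\cl}_N={\cl}J_N$ and $J_NJ_N^*$ equals the $H$-projection $P_{Z_N}^H$), a standard iterated-projection argument---based on minimizing $\norm{v-u}^2 = \norm{v-P_{Z_N}^Hv}^2+\norm{P_{Z_N}^Hv-u}^2$ over $u\in\ker Q_N\subset Z_N$---gives $P_{\ker Q_N}^H=J_NP_{\ker Q_N}^{Z_N}J_N^*$. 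Substituting yields
\begin{equation*}
L_{0,N}L_{0,N}^* = {\cl}\,P_{\ker Q_N}^H\,{\cl}^*\in\End(X).
\end{equation*}

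Next I would show that the increasing family of closed subspaces $\ker Q_N\subset\ker Q$ has dense union in $\ker Q$. Given $v\in\ker Q$, density of $\cup_N Z_N$ in $H$ produces $z_N\in Z_N$ with $z_N\to v$, so $Qz_N\to 0$. By Proposition \ref{P:limHZN2}, $Q_N$ is surjective for large $N$, and the argument used for (\ref{E:invcont}) shows $(Q_NQ_N^*)^{-1}\to(QQ^*)^{-1}$ in $\End(W)$, so the operators $Q_N^*(Q_NQ_N^*)^{-1}$ are uniformly norm-bounded. Setting $r_N:=Q_N^*(Q_NQ_N^*)^{-1}Qz_N\in Z_N$ therefore gives $\norm{r_N}\to 0$, and $v_N:=z_N-r_N$ lies in $\ker Q_N$ with $v_N\to v$. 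Lemma \ref{L:incsub}, applied inside the Hilbert space $\ker Q$, then yields strong convergence $P_{\ker Q_N}^H\to P_{\ker Q}^H$ on $H$ (both sides vanish on $(\ker Q)^\perp$ since $\ker Q_N\subset\ker Q$). Because $X$ is finite-dimensional, ${\cl}^*(X)$ is finite-dimensional, so the strong convergence restricted to that subspace is in fact norm convergence; composing with the continuous map ${\cl}$ gives ${\cl}P_{\ker Q_N}^H{\cl}^*\to {\cl}P_{\ker Q}^H{\cl}^*$ in operator norm on $\End(X)$, which is (ii).

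Finally, (i) falls out of (ii): the hypothesis ${\cl}(\ker Q)=X$ makes $L_0$ a bijection from the finite-dimensional space $\ker Q\ominus\ker{\cl}$ onto $X$, so $L_0L_0^*\in\End(X)$ is invertible. Since invertible elements form an open subset of $\End(X)$ and $L_{0,N}L_{0,N}^*\to L_0L_0^*$, the operator $L_{0,N}L_{0,N}^*$ is invertible for all sufficiently large $N$, forcing ${\cl}_N(\ker Q_N)=X$. The main obstacle I anticipate is the density step: $\cup_N\ker Q_N$ being dense in $\ker Q$ does not follow automatically from density of $\cup_N Z_N$ in $H$, and the correction $r_N$---whose smallness rests on the uniform control of $(Q_NQ_N^*)^{-1}$ supplied by Proposition \ref{P:limHZN2}---is the key device that makes everything else routine.
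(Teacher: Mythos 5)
Your proof is correct, and for part (ii) it follows the same backbone as the paper's: both reduce to the identity $L_{0,N}L_{0,N}^*={\cl}P_{\ker Q_N}{\cl}^*$ from Proposition \ref{P:L0L0star} (you are more careful than the paper in checking that the projection computed inside $Z_N$ agrees with the $H$-projection via $P_{\ker Q_N}^H=J_NP_{\ker Q_N}^{Z_N}J_N^*$, which is a worthwhile remark), and both finish by upgrading strong convergence of the projections to norm convergence using $\dim X<\infty$. Where you genuinely diverge is in how the strong convergence $P_{\ker Q_N}\to P_{\ker Q}$ is obtained and in how (i) is deduced. The paper gets the projection convergence from Theorem \ref{T:limQN}, whose proof rests on the identity $Z_N\ominus\ker Q=P_{Z_N}\bigl((\ker Q)^\perp\bigr)$ and on Lemma \ref{L:limproj} about projections onto images of a fixed finite-dimensional subspace; you instead prove directly that $\cup_N\ker Q_N$ is dense in $\ker Q$ by correcting an approximating sequence $z_N$ with $r_N=Q_N^*(Q_NQ_N^*)^{-1}Qz_N$, whose smallness follows from the uniform control of $(Q_NQ_N^*)^{-1}$ already established in the paper around (\ref{E:invcont}), and then invoke Lemma \ref{L:incsub} inside the Hilbert space $\ker Q$. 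Both routes use the finite codimension of $\ker Q$, just in different places. For (i) the paper argues via transversality (Lemma \ref{L:transverse} plus Theorem \ref{T:limHZN3}), whereas you extract it from (ii): since $L_0$ is a bijection onto $X$, the operator $L_0L_0^*$ is invertible, invertibility is an open condition in $\End(X)$, and invertibility of $L_{0,N}L_{0,N}^*$ forces surjectivity of $L_{0,N}$ and hence of ${\cl}_N|\ker Q_N$ (whose image coincides with that of $L_{0,N}$). Your derivation of (i) is more economical and avoids Theorem \ref{T:limHZN3} entirely; the paper's transversality route is more structural and reusable, but nothing in your argument is circular since the identity $L_{0,N}L_{0,N}^*={\cl}P_{\ker Q_N}{\cl}^*$ and its convergence do not presuppose surjectivity of $L_{0,N}$.
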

Let us note what $L_{0,N}$ is more explicitly:
\begin{equation}
L_{0,N}:  \ker Q_N\cap [\ker Q_N\cap (\ker\cl_N)^\perp]\to X: z\mapsto {\cl_N}z={\cl}z.
\end{equation}
The statement that  ${\cl}_N$ maps $\ker Q_N=Z_N\cap\ker Q$ surjectively onto $X$ for large $N$ therefore means that {\em $L_{0,N}$ is an isomorphism for large $N$.}

We note that, as a consequence of (\ref{E:L0N}),
\begin{equation}\label{E:L0Ndet}
\lim_{N\to\infty} \det(L_{0,N}L_{0,N}^*)  =  \det(L_0L_0^*).
\end{equation} 
We will need this in working out the limit of the right hand side in the  disintegration formula (\ref{E:disintgenslice5}). 
 
\begin{proof} By Proposition \ref{P:L0L0star} we have:
\begin{equation}\label{E:L0lim}
 L_{0,N}L_{0,N}^*={\cl}P_{\ker Q_N}{\cl}^*\qquad\hbox{and}\qquad  L_{0}L_{0}^*={\cl}P_{\ker Q}{\cl}^*.
\end{equation}
Now for any $v\in H$ we have
\begin{equation}
\begin{split}
\lim_{N\to\infty} L_{0,N}L_{0,N}^*v &=\lim_{N\to\infty} {\cl}(P_{\ker Q_N}{\cl}^*v)\\
&= {\cl}\bigl(\lim_{N\to\infty}P_{\ker Q_N}({\cl}^*v) \bigr)\\
&={\cl}P_{\ker Q}({\cl}^*v) \qquad\hbox{by Theorem \ref{T:limQN},}\\
&=  L_{0}L_{0}^*v \qquad\hbox{by (\ref{E:L0lim}).}
\end{split}
\end{equation}
Since $X$ is finite-dimensional this pointwise convergence implies the convergence of operators which proves (\ref{E:L0N}).

 The conclusion (i) follows from the transversality result in Lemma \ref{L:transverse} and the limit result in Theorem \ref{T:limHZN3}, both proved in section \ref{s:linlim}.
\end{proof}

\subsection{The large-$N$ limit of the integrand}   Let $N_0$ be any value of $N$ for which $L_{0,N}$ is surjective onto $X$. Then let  us also recall that, for $N>N_0$, the integrand $I_N$ given in (\ref{E:IN2}):
\begin{equation}\label{E:disintI2}
 I_N(x)=  \phi(x) 
  \left\{	1 - a_{{z^{0,N}}}^{-2} \norm{{L_{0,N} }^{-1}(x-{z^{0,N}}_{(k)})}^2  \right\}^{\frac{d-k-m-1}{2}}1_{D_N}(x),\end{equation}
  where $D_N$ is the set of all $x\in\mbr^k$ for which the term within $\{\ldots\}$ is positive.
 
  Then we observe that
  \begin{equation}
  \begin{split}
   \norm{{L_{0,N} }^{-1}(x-{z^{0,N}}_{(k)})}^2  &\leq \norm{{(L_{0,N}L_{0,N}^*) }^{-1} } \norm{x-{z^{0,N}}_{(k)}  }^2\\
   &\leq  C (\norm{x}^2+\norm{  {z^{0,N}}_{(k)}  }^2)\\
   &\leq C(\norm{x}^2+ \norm{z^{0,N }_{(k)}}^2)\\
   &\leq C(\norm{x}^2+ \norm{z^{0,N } }^2),
  \end{split}
  \end{equation}
  where
  \begin{equation}
  C=\sup_{N\geq 1} 2\norm{{(L_{0,N}L_{0,N}^*) }^{-1} }<\infty,
  \end{equation}
  because of the finiteness of the limit of $\norm{{(L_{0,N}L_{0,N}^*) }^{-1} }$ as $N\to\infty$, as seen in (\ref{E:L0N}).
  
Moreover,
  $$a_{z^{0,N}}^2=N-\norm{z^{0,N}}^2$$
  lies between $N-\norm{z^{0,N_0}}^2$ and $N-\norm{z^0}^2$, because, from the definition of $z^{0,N}$ as the point on $Q^{-1}(w^0)\cap Z_N$ closest to the origin we have,
  \begin{equation}
  \norm{z^{0,N_0}}\leq \norm{z^{0,N}}\leq \norm{z^0}.
  \end{equation}
  Consequently,
  \begin{equation}
 \lim_{N\to\infty} a_{{z^{0,N}}}^{-2} \norm{{L_{0,N} }^{-1}(x-{z^{0,N}}_{(k)})}^2  =0.
 \end{equation}
 Hence,  any given point  $x\in\mbr^k$ lies in $D_N$ for $N$ large enough.

As we have just seen, the term within $\{\ldots\}$ in $I_N$ goes to $1$ as $N\to\infty$; this implies
  \begin{equation}\label{E:aNlim}
  \begin{split}
&  \lim_{N\to\infty} \left\{	1 - a_{{z^{0,N}}}^{-2} \norm{{L_{0,N} }^{-1}(x-{z^{0,N}}_{(k)})}^2  \right\}^{\frac{d-k-m-1}{2}}\\
&= \lim_{N\to\infty}  \left\{	1 - a_{{z^{0,N}}}^{-2} \norm{{L_{0,N} }^{-1}(x-{z^{0,N}}_{(k)})}^2  \right\}^{\frac{N}{2}}.\end{split}
  \end{equation}

  To work out the limit on the right side of  (\ref{E:aNlim}), let us note first that, by dominated convergence,
  \begin{equation}\label{E:explim}
  \begin{split}
(1+x_N)^{ N} &= 1+ Nx_N +\frac{ N(N-1)}{2!}x_N^2+\ldots\\
&= 1+Nx_N +\frac{1*(1-N^{-1})}{2!}(Nx_N)^2+\ldots\\
&\to \exp\left(\lim_{N\to\infty}Nx_N\right), \quad\hbox{if $\lim_{N\to\infty}Nx_N$ exists and is finite.}
\end{split}
  \end{equation}
In the present context
\begin{equation}
\begin{split}
Nx_N &= -Na_{{z^{0,N}}}^{-2} \norm{{L_{0,N} }^{-1}(x-{z^{0,N}}_{(k)})}^2.
\end{split}
\end{equation}
For $N$ large enough, independent of $x$, this is bounded above by
\begin{equation}
2 C(\norm{x}^2+ \norm{z^{0,N_0} }^2)
\end{equation}
because
$$Na_{{z^{0,N}}}^{-2} \leq N/(N-\norm{z^{0,N_0}}^2).$$
  
  Moreover,
  \begin{equation}\label{E:limNxN}
  \begin{split}
 & \lim_{N\to\infty} Nx_N \\&= -1*\lim_{N\to\infty}\la (L_{0,N}L_{0,N}^*)^{-1}(x-{z^{0,N}}_{(k)}), (x-{z^{0,N}}_{(k)})\ra\\
   &=- \la (L_0L_0^*)^{-1}(x-z^0_{(k)}), x-z^0_{(k)}\ra
  \end{split}
  \end{equation}
   where we have used the limiting formula (\ref{E:L0N}) as well as Proposition  \ref{P:limz0NK} (which implies that ${z^{0,N}}_{(k)}\to {z^{0 }}_{(k)}$ as $N\to\infty$).
 
 Returning to (\ref{E:aNlim})  we have
\begin{equation}
\begin{split}
&  \lim_{N\to\infty}  \left\{	1 - a_{{z^{0,N}}}^{-2} \norm{{L_{0,N} }^{-1}(x-{z^{0,N}}_{(k)})}^2  \right\}^{\frac{N}{2}} 1_{D_N}(x)\\
&
\hskip 1in = \exp\left(-\frac{1}{2}  \la (L_0L_0^*)^{-1}(x-z^0_{(k)}), x-z^0_{(k)}\ra\right).
 \end{split}
\end{equation}

  \subsection{Proof of Theorem \ref{T:limintfsig} for $\phi\in L^1(\mbr^k)$} Looking back at (\ref{E:disintgenslice4b}) we have
\begin{equation}\label{E:disintgenslice5}
 \lim_{N\to\infty} \int_{S_{Z_{N}}(a)\cap Q_N^{-1}(w^0)}f\,d{\bar\sigma}\\
 =  \lim_{N\to\infty}  \frac{  c_{d-k-m}}{a_{{z^{0,N}}}^{k}c_{d-m}} \int_{\mbr^k}
I_N\,\frac{dx}{|\det L_{0,N} |},
\end{equation}
where
\begin{equation}\label{E:IN3}
I_N=  \phi(x) 
  \left\{	1 - a_{{z^{0,N}}}^{-2} \norm{{L_{0,N} }^{-1}(x-{z^{0,N}}_{(k)})}^2  \right\}^{\frac{d-k-m-1}{2}}1_{D_N}(x).\end{equation}
We have already determined the limits of the constant term outside the integral  (in (\ref{E:constlim})), as well as those of the full integrand on the right hand side. Moreover, we observe that 
$$|I_N|\leq |\phi(x)|.$$
Thus, assuming that $\phi$ is integrable over $\mbr^k$, we can apply dominated convergence to conclude that
\begin{equation}\label{E:disintgenslice6}
\begin{split}
& \lim_{N\to\infty} \int_{S_{Z_{N}}(a)\cap Q_N^{-1}(w^0)}f\,d{\bar\sigma}\\
& =  (2\pi)^{-k/2}\int_{\mbr^k}\phi(x) \exp \left({-\frac{1}{2}\la  (L_0L_0^*)^{-1}(x-z^0_{(k)}), x-z^0_{(k)} \ra }\right) \,\frac{dx}{\sqrt{ \det(L_{0}L_{0}^*).  }  }
\end{split}
\end{equation}

  \subsection{Proof of Theorem \ref{T:limintfsig} for  more general $\phi$} 
  \begin{proof}  Let $\phi$ be any bounded Borel function on $\mbr^k$. Let us recall from (\ref{E:disintgenslice4b}) that:
  \begin{equation}\label{E:disintgenslice4c}
 \int_{S_{Z_N}(a)\cap Q_N^{-1}(w^0)}f\,d{\bar\sigma} =
\int_{\mbr^k}\phi(x)\,d\mu_N(x),
\end{equation}
where 
  $$f(x)= \phi(x_1,\ldots, x_k)\qquad\hbox{for all $x=(x_1,x_2,\ldots)$}$$
and
\begin{equation}\label{E:dmuN}
\begin{split}
&d\mu_N(x)\\
&=  
 \frac{  c_{d-k-m}}{a_{{z^{0,N}}}^{k}c_{d-m}}  \left\{	1 - a_{{z^{0,N}}}^{-2} \norm{{L_{0,N} }^{-1}(x-{z^{0,N}}_{(k)})}^2  \right\}^{\frac{d-k-m-1}{2}}1_{D_N}(x)\, \frac{dx}{|\det L_{0,N} |}.
 \end{split}
 \end{equation}
 Taking $\phi=1$ in (\ref{E:disintgenslice4c}) we see that $\mu_N$ is a probability measure. 
 Now let $\mu_{\infty}$ be the Gaussian measure on $\mbr^k$ given by
 \begin{equation}\label{E:muinfty}
 d\mu_\infty(x)= (2\pi)^{-k/2} \exp\left({-\frac{1}{2}\la  (L_0L_0^*)^{-1}(x-z^0_{(k)}), x-z^0_{(k)} \ra }\right) \,\frac{dx}{\sqrt{ \det(L_{0}L_{0}^*)  }  }.
 \end{equation}
With this notation, the result (\ref{E:disintgenslice6}) says that
 \begin{equation}\label{E:disintgenslice7}
 \lim_{N\to\infty} \int_{\mbr^k}\psi\,d\mu_N=\int_{\mbr^k}\psi\,d\mu_{\infty}\qquad\hbox{for all $\psi\in L^1(\mbr^k)$.}
 \end{equation}
 Taking $\psi$  to be the indicator function of any compact set $B$ we have:
 \begin{equation}\label{E:disintgenslice7b}
 \lim_{N\to\infty}\mu_N(B) =\mu_{\infty}(B).
 \end{equation}
Since $\mu_N$ and $\mu_{\infty}$ are probability measures, this also implies
 \begin{equation}\label{E:disintgenslice8}
 \lim_{N\to\infty}\mu_N(B^c) =\mu_{\infty}(B^c).
 \end{equation}
 Now let
  $\epsilon>0$. Then there is a compact set $B_\epsilon\subset \mbr^k$ for which 
 $$\mu_{\infty}(B_{\epsilon})>1-\epsilon.$$
 We have
 \begin{equation}\label{E:limN}
 \begin{split}
 \int_{\mbr^k} \phi  \,d\mu_N - \int_{\mbr^k} \phi  \,d\mu_{\infty} &= \int_{\mbr^k} \phi 1_{B_{\epsilon}}\,d\mu_N-  \int_{\mbr^k} \phi 1_{B_{\epsilon}} \,d\mu_\infty \\
 &\qquad\qquad + \int_{\mbr^k} \phi 1_{B_{\epsilon}^c } \,d\mu_N- \int_{\mbr^k} \phi  1_{B_{\epsilon}^c }\,d\mu_\infty.\\
 &
 \end{split}
 \end{equation}
Taking $\psi$ to be $\phi 1_{B_{\epsilon}}$, which is integrable over $\mbr^k$, in (\ref{E:disintgenslice7}),  we have
\begin{equation}
\lim_{N\to\infty}\left[\int_{\mbr^k} \phi 1_{B_{\epsilon}}\,d\mu_N-  \int_{\mbr^k} \phi 1_{B_{\epsilon}} \,d\mu_\infty\right]=0.
\end{equation}
Next,
\begin{equation}
\begin{split}
\limsup_{N\to\infty} \Big|\int_{\mbr^k} \phi 1_{B_{\epsilon}^c } \,d\mu_N\Big| &\leq\norm{\phi}_{\sup}\lim_{N\to\infty}\mu_N(B_{\epsilon}^c) \\
&= \norm{\phi}_{\sup} \mu_{\infty}(B_{\epsilon}^c)\\
&\leq  \norm{\phi}_{\sup}\epsilon.
\end{split}
\end{equation}
Using these observations  in (\ref{E:limN}) we have
\begin{equation}
\limsup_{N\to\infty}\Big|\int_{\mbr^k} \phi  \,d\mu_N - \int_{\mbr^k} \phi  \,d\mu_{\infty}\Big| \leq 0+2\norm{\phi}_{\sup}\epsilon.
\end{equation}
Since $\epsilon>0$ is arbitrary, this establishes our goal:
\begin{equation}\label{E:philimgoal}
\lim_{N\to\infty}\int_{\mbr^k}\phi\,d\mu_N= \int_{\mbr^k}\phi\,d\mu_\infty.
\end{equation}
This completes the proof of  Theorem \ref{T:limintfsig}. 
\end{proof}

 \subsection{Proof of Theorem \ref{T:RadnonNlim}}
   Let $\mu$ be the probability measure on $\mbr^\infty$  (the space of all real sequences) that is specified by the characteristic function
 \begin{equation}\label{E:mucf}
 \int_{\mbr^\infty}\exp\left({i\la t, x\ra}\right)\,d\mu(x)=\exp\left({i\la t, z^0\ra -\frac{1}{2}\norm{P_0t}^2}\right),
 \end{equation}
 where $t\in l^2$ is any sequence with finitely many nonzero entries,  $P_0$ is the orthogonal projection in $l^2$ onto a closed subspace of finite codimension $m$, and $z^0$ is any point in $l^2$.
 Let us determine the pushforward measure ${\pi_{(k)}}_*\mu$ of $\mu$ to $\mbr^k$:
 \begin{equation}
{ \pi_{(k)}}_*\mu(S)=\mu\bigl(\pi_{(k)}^{-1}(S)\bigr),\qquad\hbox{for all Borel $S\subset\mbr^k$,}
 \end{equation}
 where 
 $$\pi_{(k)}:\mbr^\infty\to\mbr^k:z\mapsto z_{(k)}=(z_1,\ldots, z_k)$$
  is the projection on the first $k$ coordinates.
 
   Let  $\cl$ be the restriction of $\pi_{(k)}$ to $l^2$:
   \begin{equation}
   \cl:l^2\to X=\mbr^k: z\mapsto z_{(k)}.
   \end{equation}
    Then the adjoint is
$$\cl^*: X\to l^2: v\mapsto (v,0,0,\ldots).$$
 The image of the orthogonal projection $P_0$ is of the form
 $$Q^{-1}(w^0),$$
 where $Q:l^2\to W=\mbr^m$ is a continuous linear sujection and $w^0$ is a point in $W$. Thus
 $$P_0= P_{\ker Q}.$$
 Then for any $t\in\mbr^k$, we have
 \begin{equation}\label{E:cfpikmu}
\begin{split}
  \int_{\mbr^k}\exp\left(i\la t,  x\ra\right)\,d \pi_{{(k)}_*}\mu(x)&=
 \int_{\mbr^\infty}\exp\left(i\la t, \pi_{(k)}x\ra\right)\,d \mu(x) \\
  &= \int_{\mbr^\infty}\exp\left(i\la {\cl}^*t,  x\ra\right)\,d \mu(x) \\
 &=\exp\left(i\la {\cl}^*t, z^0\ra -\frac{1}{2}\norm{P_0{\cl}^*t}^2\right)\\
 &= \exp\left(i\la   t, {\cl}z^0\ra -\frac{1}{2}\la P_{\ker Q}{\cl}^*t, P_{\ker Q}{\cl}^*t\ra \right)\\
 &= \exp\left(i\la  t, {\cl}z^0\ra -\frac{1}{2}\la {\cl} P_{\ker Q}{\cl}^*t, t\ra \right).
 \end{split}
 \end{equation}

 The measure $\mu_{\infty}$  in  (\ref{E:muinfty}) is given by
 \begin{equation}\label{E:muinfty2}
 d\mu_\infty(x)= (2\pi)^{-k/2} \exp\left({-\frac{1}{2}\la  (L_0L_0^*)^{-1}(x-z^0_{(k)}), x-z^0_{(k)} \ra }\right) \,\frac{dx}{\sqrt{ \det(L_{0}L_{0}^*)  }  },
 \end{equation}
 where $L_0:\ker Q\to \mbr^k$ is the restriction of $\cl$ to the Hilbert space $\ker Q\subset l^2$.
 Its characteristic function is given by
 \begin{equation}\label{E:cfmuinfty}
 \begin{split}
& \int_{\mbr^k}\mbox{exp}\left(i\la t, x\ra\right) \,d\mu_{\infty}(x) \\
&=\int_{\mbr^k} \mbox{exp}\left(i\la (L_0L_0^*)^{1/2}t,y\ra +i\la t, z^0_{(k)}\ra (2\pi)^{-k/2}-\frac{\norm{y}^2}{2} \right)\,dy\\
 &=\mbox{exp}\left(i\la t, z^0_{(k)}\ra -\frac{1}{2}\norm{ (L_0L_0^*)^{1/2}t}^2\right)
 \end{split}
 \end{equation}
 where, in the first line,  we used the natural change of variables $x=(L_0L_0^*)^{1/2}y+z^0_{(k)}$, and for the second line we used a standard formula for Gaussian integration.  Now we recall from (\ref{E:JJstar2}) that $L_0L_0^*$ equals ${\cl}P_{\ker Q}\cl^*$. Thus,
\begin{equation}\label{E:cfmuinfty2}
 \begin{split}
 \int_{\mbr^k}\exp\left({i\la t, x\ra}\right) \,d\mu_{\infty}(x) &= \exp\left({i\la t, {\cl}z^0\ra -\frac{1}{2}\la {\cl}P_{\ker Q}\cl^*t,t\ra}\right).
 \end{split}
 \end{equation}
 This agrees exactly with  the characteristic function for ${\pi_{(k)}}_*\mu$ we obtained in (\ref{E:cfpikmu}). Hence
 \begin{equation}
 {\pi_{(k)}}_*\mu =\mu_{\infty}.
 \end{equation}
Combining this with the result of  Theorem \ref{T:limintfsig} given in (\ref{E:philimgoal}), we conclude that
\begin{equation}\label{E:mainresu1}
\begin{split}
&\lim_{N\to\infty} \int_{S_{Z_N}(a)\cap Q_N^{-1}(w^0)}\phi(x_1,\ldots, x_k)\,d{\bar\sigma}(x_1,\ldots, x_N) \\
&=   \int_{\mbr^k} \phi \,d \mu_{\infty}\\
&= \int_{\mbr^k} \phi \,d {\pi_{(k)}}_*\mu = \int_{\mbr^\infty} \phi\circ{\pi_{(k)}} \,d\mu.
\end{split}
\end{equation}
This completes the proof of Theorem \ref{T:RadnonNlim}.  
   
 \section{The result in Abstract Wiener Spaces}\label{s:aws}

The concept of an Abstract Wiener Space was introduced by L. Gross \cite{GrossAWS} and is a standard framework within which Gaussian measures on infinite dimensional spaces are studied. Let $H$ be an infinite dimensional real separable Hilbert space. We work with a {\em measurable norm } $|\cdot|$  on $H$; this is a norm with the property that for any $\epsilon>0$ there is a finite-dimensional subspace $F_{\epsilon}$ of $H$ such that for any finite-dimensional subspace $F'$ of $H$ orthogonal to $F_{\epsilon}$ we have
$$\gamma_{F'}\{x\in F'\,:\, |x|>\epsilon\}<\epsilon,$$
where $\gamma_{F'}$ is the standard Gaussian measure on $F'$. Let $B$ be the Banach space obtained by completion of $H$ with respect to $|\cdot|$.   The natural injection
$$j:H\to B$$
is continuous with $H$ having the Hilbert-space topology (see, for example,  Eldredge \cite[page 16]{Eld2016}). Then $\phi\in B^*$  restricts to a continuous linear functional  on the Hilbert space $H$, and so is given by
\begin{equation}\label{E:phi0}
\phi\bigl(j(x)\bigr)=\la j^*\phi,x\ra \qquad\hbox{for all $x\in H$}
\end{equation}
for a unique element $j^*\phi\in H$. Thus we have a continuous linear injection
$$j^*:B^*\to H,$$
and the image of $j^*$ is a dense subspace of $H$.
 
With notation as above, let $L$ be a closed affine subspace of $H$. Then, as shown in \cite{HolSenGR2012}, there is a  Borel measure $\mu_L$ on $B$  such that every $\phi\in B^*$, viewed as a random variable defined on $B$, has Gaussian distribution specified by
\begin{equation}\label{E:IL1}
\int_B\exp\left(it \phi \right)\,d\mu_L = \exp\left(it\la p_L,j^*\phi\ra -\frac{t^2}{2}\norm{P_0(j^*\phi)}^2\right)\quad\hbox{for all $t\in\mbr$,}
\end{equation}
where $p_L$ is the point on $L$ closest to $0$ and $P_0:H\to H$ is the orthogonal projection onto the subspace
$$L_0=L-p_L.$$
The linear mapping
\begin{equation}
j^*(B^*)\to L^2(B,\mu_L): j^*(\phi)\mapsto \phi,
\end{equation}
  extends to a continuous linear mapping
\begin{equation}\label{E:IL}
I_L:H\to L^2(B, \mu_L).
\end{equation}
Moreover,
\begin{equation}\label{E:ILB}
\int_B\exp\left(iI_L(h)\right)\,d\mu_L = \exp\left(i\la p_L,h\ra -\frac{1}{2}\norm{P_0h}^2\right)\quad\hbox{for all $h\in H$.}
\end{equation}
To compare with a familiar situation we observe that if $H$ is finite-dimensional then  $B=H$, and:
\begin{equation}\label{E:ILfin}
\hbox{$I_L(h)  =\la \cdot, h\ra_H$.}
\end{equation}

The Gaussian measure $\mu_L$   is supported on the closure $\overline{L}$ of $j(L)$ inside $B$.

Now let us see how one can extend  a function from a finite-dimensional subspace $V$ of $H$ to a function on the Banach space $B$. First let us note that if $W$ is a closed subspace of $H$ that contains $V$ then we have the function $f_W$ on $W$ given by
\begin{equation}
f_W =f_V\circ P^W_V,
\end{equation}
where
$$P^W_V:W\to V$$
is the orthogonal projection onto the subspace $V\subset  W$.  Suppose $h_1,\ldots, h_k$ is an orthonormal basis  of $V$. Then
\begin{equation}\label{E:PWV}
P^W_V(w)=\sum_{r=1}^k \la w, h_r\ra h_r \qquad\hbox{for all $w\in W$.}
\end{equation}

Next we extend this process all the way to $B$. However, there is no ``orthogonal projection'' from $B$ onto $V$. Nonetheless, by choosing an orthonormal basis $h_1,\ldots, h_k$ of $V$ we can define
\begin{equation}\label{E:PBV}
P^B_V =\sum_{r=1}^kI_L(h_r)h_r,
\end{equation}
where $I_L$ is as in (\ref{E:IL}). If $H$ is  finite-dimensional then, in view of (\ref{E:ILfin}),   we can see that the expression for $P^B_V$ in (\ref{E:PBV}) agrees with (\ref{E:PWV}). If $V$ happens to be contained in the subspace $j^*(B^*)$ then $P^B_V$ is given more clearly by
$$P^B_Vx= \sum_{r=1}^k \phi_r(x)h_r$$
where $\phi_r$ is the point in $ B^*$ for which $j^*(\phi_r)=h_r$. If $f$ is a function on $V$ then we can extend to a $\mu_L$-almost-everywhere defined function $f_B$ on $B$ by:
\begin{equation}
f_B=f\circ P^B_V.
\end{equation}
(This notion was discussed in Gross \cite{GrossAWS} for $V\subset j^*(B^*)$.)
With this notation, we can formulate our main result in the setting of Abstract Wiener Spaces.

\begin{theorem}\label{T:mainAWS}
Let $H$ be a real separable Hilbert space and $B$ the closure of $H$ with respect to a measurable norm. Let $L$ be a closed affine subspace of  $H$ of finite codimension.  Let $f$ be a Borel function on a finite-dimensional nonzero subspace $V$ of $H$ such that the orthogonal projection $P^H_V$ maps $L$ onto $V$. Suppose $Z_1, Z_2, \ldots$ are  finite-dimensional subspaces of $H$, and
$$V\subset Z_1\subset Z_2\subset\ldots \subset H$$
with $\cup_{N\geq 1}Z_N$ being dense in $H$. Then
\begin{equation}\label{E:AWSlim}
\lim_{N\to\infty}(R_{Z_N}f_{Z_N})(L\cap S_{Z_N})=Gf_B(L)
\end{equation}
Here,  on the left is the normalized  surface-area integral of $f_{Z_N}$ over the circle $L\cap S_{Z_N}$ formed by intersecting $L$ with the sphere in $Z_N$ of radius  $\sqrt{\dim Z_N}$, and on the right is the integral of $f_B$ over $B$ with respect to the measure $\mu_L$.
\end{theorem}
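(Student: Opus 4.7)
The plan is to reduce the theorem to Theorem \ref{T:RadnonNlim} by choosing an orthonormal basis of $H$ adapted to the filtration $V\subset Z_1\subset Z_2\subset\cdots$, and then to identify the resulting Gaussian integral on $\mbr^k$ with $Gf_B(L)=\int_B f_B\,d\mu_L$ by matching characteristic functions of Gaussian vectors.

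First I would choose an orthonormal basis $\{h_n\}_{n\geq 1}$ of $H$ such that $h_1,\ldots,h_k$ spans $V$ (with $k=\dim V$) and such that there exist integers $d_1<d_2<\cdots$ with $Z_N=\mathrm{span}(h_1,\ldots,h_{d_N})$; such a basis exists because the $Z_N$ are nested finite-dimensional subspaces, each containing $V$, with dense union. The induced isometry $H\to l^2$ sends $Z_N$ to $\mbr^{d_N}\oplus\{0\}$, the sphere of radius $\sqrt{d_N}$ in $Z_N$ to $S^{d_N-1}(\sqrt{d_N})$, and $V$ to $\mbr^k\oplus\{0\}$. Under this identification the function $f_{Z_N}=f\circ P^{Z_N}_V$ becomes $(x_1,\ldots,x_{d_N})\mapsto f(x_1,\ldots,x_k)$, and the hypothesis that $P^H_V$ maps $L$ onto $V$ becomes exactly the surjectivity assumption on $\pi_{(k)}$ used in Theorem \ref{T:RadnonNlim}.

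Next I would invoke Theorem \ref{T:RadnonNlim} (with the increasing sequence $d_N\to\infty$ in place of $N$) to conclude that
\begin{equation*}
\lim_{N\to\infty}(R_{Z_N}f_{Z_N})(L\cap S_{Z_N}) = \int_{\mbr^k}f\,d\nu,
\end{equation*}
where $\nu={\pi_{(k)}}_*\mu$ and $\mu$ is the measure on $\mbr^\infty$ specified by (\ref{E:cphi}). The computation in (\ref{E:cfpikmu}) identifies $\nu$ as the Gaussian measure on $\mbr^k\cong V$ whose mean has $r$-th component $\la p_L,h_r\ra$ and whose covariance matrix is $[\la P_0 h_r, P_0 h_s\ra]_{r,s=1}^k$.

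Finally I would identify $\int_{\mbr^k}f\,d\nu$ with $Gf_B(L)$. Since $P^B_V=\sum_{r=1}^k I_L(h_r)\,h_r$, the integral $\int_B f_B\,d\mu_L$ equals the integral of $f$ (viewed on $V\cong\mbr^k$ in the basis $h_1,\ldots,h_k$) against the law under $\mu_L$ of the random vector $\bigl(I_L(h_1),\ldots,I_L(h_k)\bigr)$. Linearity of $I_L:H\to L^2(B,\mu_L)$, together with the characteristic function (\ref{E:ILB}) applied to $h=\sum_{r=1}^k t_r h_r\in V$, shows that this vector is jointly Gaussian under $\mu_L$ with mean $(\la p_L,h_r\ra)_r$ and covariance $[\la P_0 h_r, P_0 h_s\ra]_{r,s}$, precisely matching $\nu$. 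Hence the two laws on $\mbr^k$ coincide and (\ref{E:AWSlim}) follows. The main subtlety (rather than true obstacle) is that $I_L(h_r)$ is only $\mu_L$-a.e.\ defined, so $P^B_V$ and $f_B$ must be interpreted $\mu_L$-a.e.; for bounded Borel $f$ this causes no measure-theoretic trouble, and the joint Gaussianity required for the matching step is immediate from (\ref{E:ILB}).
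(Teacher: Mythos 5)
Your proposal is correct and follows essentially the same route as the paper: choose an orthonormal basis adapted to $V$ and the filtration $Z_N$, identify $H$ with $l^2$ so that the finite-codimension $l^2$ limit theorem applies, and then identify the resulting Gaussian measure on $\mbr^k$ with the law of $\bigl(I_L(h_1),\ldots,I_L(h_k)\bigr)$ under $\mu_L$ by matching characteristic functions via (\ref{E:ILB}). The only cosmetic difference is that you cite Theorem \ref{T:RadnonNlim} and push forward by $\pi_{(k)}$, while the paper invokes Theorem \ref{T:limintfsig} directly to land on the $\mbr^k$ Gaussian density; these are the same computation.
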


\begin{proof}
Let
$$L_0=L-p_L,$$
be the subspace of $H$ parallel to $L$; here $p_L$ is the point on $L$ closest to $0$. 
Since the affine subspace $L$ is of finite codimension, there is an orthonormal basis $u_1,\ldots, u_m$ of   $L_0^\perp$.  Let
$$\hbox{$u_{j,N}$ be the orthogonal projection of $u_j$ onto $Z_N$.}$$
Since $\cup_{N\geq 1}Z_N$ is dense in $H$, $u_{j,N}\neq 0$ for large $N$  (Lemma \ref{L:indep}). Thus,
  for $N$ large enough, the orthogonal projection $u_{j,N}\neq 0$ for every $j\in\{1,\ldots, m\}$.

  A vector $v\in Z_N\subset H$ is orthogonal to $u_1,\ldots, u_m$ if and only if it is orthogonal to the vectors $u_{1,N},\ldots u_{m,N}$.
  
  A point $x\in H$ lies in  $L$ if and only if
  $$\la x, u_1\ra =p_1,\ldots, \la x, u_m\ra=p_m,$$
  where $p_j=\la p_L, u_j\ra$ for each $j$. Thus
 $L_N=L\cap Z_N $ consists of all points $x\in Z_N$ satisfying
 \begin{equation}\label{E:xuj}
 \la x, u_{j,N}\ra=p_j\qquad\hbox{for all $j\in\{1,\ldots, m\}$.}
 \end{equation}
 For large $N$ the set of all such $x$ constitutes an affine subspace in $Z_N$ of codimension $m$.
 
 Let $V_0$ be the orthogonal projection of $L_0$ on $V$:
 $$V_0=P^H_V(L_0).$$
  If $L_0=\{0\}$ then $P^H_V(L)$ consists of just one point, and the functions $f_{Z_n}$ and $f_B$ are all constant, equal to the value of $f$ at that point. In this case our main result is true because both sides are equal to the value of $f$ at this point. So now we assume that $V_0\neq 0$ has dimension $k\geq 1$.  Let us choose an orthonormal basis $h_1, h_2, h_3,  \ldots$ of $H$ such that the first $k$ vectors $h_1,\ldots, h_k$ form a basis of $V_0$.

 Let
 $$d_N=\dim Z_N.$$
 Then 
 \begin{equation}
 \begin{split}
 (R_{Z_N}f_{Z_N })(S_{L_N}) &= \int_{S_{L_N}} f(x_1h_1+\ldots +x_kh_{k})\,d\ovs(x_1,\ldots, x_{d_N}),
 \end{split}
 \end{equation}
 where $S_{L_N}$ is the circle in $Z_N$ formed by the intersection of the sphere of radius $\sqrt{d_N}$ in $\mbr^{d_N}$
 with the affine subspace of $\mbr^{d_N}$ comprised of all points $x$ for which $x_1h_1+\ldots +x_{d_N}h_{d_N}$ lies in $L$.   
 
 We identify $H$ with $l^2$ via the orthonormal basis $h_1, h_2,\ldots$, and denote again by $L$ the affine subspace of $l^2$ that corresponds to $L\subset H$.

 Then by Theorem \ref{T:limintfsig}
 
  \begin{equation}\label{E:RNSZNf}
 \begin{split}
& \lim_{N\to\infty}(R_{Z_N}f_{Z_N })(S_{L_N}) \\
&= (2\pi)^{-k/2}\int_{\mbr^k}  f(x_1h_1+\ldots +x_kh_{k})\cdot\\
&\hskip 1in \cdot \exp\left(-\frac{1}{2}\la  (L_0L_0^*)^{-1}(x-z^0_{(k)}), x-z^0_{(k)} \ra \right) \,\frac{dx}{\sqrt{ \det(L_{0}L_{0}^*)  }  },
 \end{split}
 \end{equation}
with notation as in Theorem \ref{T:limintfsig}. The right hand side here is equal to
\begin{equation}\label{E:intGfBL}
\int_B f\bigl(I_L(h_1)h_1+\ldots +I_L(h_k)h_k\bigr)\,d\mu_L,
\end{equation}
which we see by observing that
\begin{equation}
\begin{split}
&
\int_B \exp\left({i\bigl(t_1I_L(h_1) +\ldots +t_kI_L(h_k)\bigr)}\right)\,d\mu_L\\
& = \int_B \exp\left({iI_L(t_1h_1+\ldots+t_k h_k)}\right)\,d\mu_L\\
&=\exp\left({ i\la z^0, t_1h_1+\ldots +t_k h_k\ra -\frac{1}{2}\norm{P_{L_0}\bigl(t_1 h_1  +\ldots +t_k  h_k \bigr)}^2  }\right)\\
&=\exp\left({i\la t, z^0_{(k)}\ra -\frac{1}{2} \la (L_0L_0^*)t,t\ra}\right).
\end{split}
\end{equation}
This implies that the distribution of $\bigl(I_L(h_1),\ldots, I_L(h_k)\bigr)$  has the Gaussian density
\begin{equation}
(2\pi)^{-k/2} \exp\left(-\frac{1}{2}\la  (L_0L_0^*)^{-1}(x-z^0_{(k)}), x-z^0_{(k)} \ra \right) \,\frac{dx}{\sqrt{ \det(L_{0}L_{0}^*)  }  }
\end{equation}
that appears on the right side in (\ref{E:RNSZNf}).  We have thus completed the proof, since (\ref{E:intGfBL}) is exactly $Gf_B(L)$.
 \end{proof}

\section{Some linear algebra  and limits}\label{s:linlim}

In this section we prove results that we have used in earlier sections. We will often use the following notation:
\begin{equation}\label{E:minus}
X\ominus Y=X\cap (X\cap Y)^\perp,
\end{equation}
which is the orthogonal complement of $X\cap Y$ within $X$, where $X$ and $Y$ are subspaces of any given inner-product space.

\subsection{Subspaces and Projections}

Let us begin with some observations about projections onto subspaces.
 
The following  result is a basic observation about  how subspaces in a vector space may be situated relative to each other.

\begin{lemma}\label{L:transverse} Let   ${\cl}:H\to X$ and $Q:H\to W$ be surjective linear maps between vector spaces. Then the following are equivalent:
\begin{itemize}
\item[(i)] ${\cl}$ maps $\ker Q$ surjectively onto $X$;
\item[(ii)] $Q$ maps $\ker {\cl}$ surjectively onto $W$;
\item[(iii)] $\ker {\cl}+\ker Q=H$.
\end{itemize}
More generally, ${\cl}$ maps a subspace $V\subset H$ onto $X$ if and only if $V+\ker \cl=H$. 
\end{lemma}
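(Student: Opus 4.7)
The plan is to reduce the three-way equivalence to the single generalized statement at the end: $\cl(V)=X$ iff $V+\ker\cl=H$. This generalized claim is the real content, and the equivalences (i)$\Leftrightarrow$(ii)$\Leftrightarrow$(iii) then follow by two symmetric applications.

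First I would prove the generalized claim by direct element chase. For the ``if'' direction, given any $x\in X$, use surjectivity of $\cl$ to find $h\in H$ with $\cl h=x$, then decompose $h=v+k$ with $v\in V$ and $k\in\ker\cl$ using the hypothesis $V+\ker\cl=H$; then $\cl v=x$, showing $\cl(V)=X$. For the ``only if'' direction, take any $h\in H$, set $x=\cl h\in X$, and use $\cl(V)=X$ to pick $v\in V$ with $\cl v=x$; then $h-v\in\ker\cl$ and $h=v+(h-v)\in V+\ker\cl$. No topology, no inner product, no finite-dimensionality is used — this is pure linear algebra.

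With the generalized claim in hand, (i)$\Leftrightarrow$(iii) is immediate by specializing $V=\ker Q$. The equivalence (ii)$\Leftrightarrow$(iii) is the same statement with the roles of $\cl$ and $Q$ interchanged: applying the generalized claim to $Q$ with $V=\ker\cl$ gives ``$Q(\ker\cl)=W$ iff $\ker\cl+\ker Q=H$,'' which is exactly (ii)$\Leftrightarrow$(iii).

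I do not anticipate a real obstacle here: the only delicate point is that surjectivity of $\cl$ (respectively $Q$) is genuinely needed to lift arbitrary elements of $X$ (resp.\ $W$) back to $H$ in the ``if'' direction. If one weakened surjectivity, the equivalence would have to be rephrased in terms of the images. Everything else is an elementary decomposition argument that takes a few lines.
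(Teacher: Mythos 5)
Your proof is correct and follows essentially the same route as the paper: both establish the general claim ``$\cl(V)=X$ iff $V+\ker\cl=H$'' first and then obtain (i)$\Leftrightarrow$(iii) and (ii)$\Leftrightarrow$(iii) by specializing $V=\ker Q$ and interchanging the roles of $\cl$ and $Q$. The only cosmetic difference is that the paper packages your element chase into the identity ${\cl}^{-1}\bigl({\cl}(V)\bigr)=V+\ker{\cl}$, but the underlying argument, including the use of surjectivity in the ``if'' direction, is identical.
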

\begin{proof}  Let $V$ be a subspace of $H$. Then
$${\cl}^{-1}\bigl({\cl}(V)\bigr)=V+\ker {\cl}.$$
If $V+\ker {\cl}=H$ then  $X={\cl}(H)={\cl}(V)$. Conversely, if $\cl(V)=X$ then
$V+\ker {\cl}= {\cl}^{-1}\bigl({\cl}(V)\bigr)={\cl}^{-1}(X)=H$. 

Thus (iii) is equivalent to (i) and also (with $\cl$ and $Q$ interchanged) to (ii). 
\end{proof}

Let $H$ be a Hilbert space, $K$ and $M$ closed subspaces of $H$ such that $K^\perp \subset M$.  If  we split $v\in M$ as $P_Kv+P_{K^\perp}v$, then in this the second vector is in $M$ and hence so is the first vector; thus $P_Kv\in M$ if $v\in M$. This means that the point $P_Kv$ on $K$ closest to $v$ is in fact in $K\cap M$. Thus
\begin{equation}\label{E:PKP}
P_Kv=P_{K\cap M}v \qquad\hbox{if $v\in M$.}
\end{equation}

Here are some more   observations on how subspaces are situated relative to each other.

\begin{prop}\label{P:ImRper} Let $R$ be an orthogonal projection in a Hilbert space $H$ and $K$ a closed subspace of $H$. Then
\begin{equation}\label{E:Rorth}
 {\rm Im}(R)\cap K^\perp= {\rm Im}(R)\cap [R(K)]^\perp.
\end{equation}
Moreover, the orthogonal complement of ${\rm Im}(R)\cap K$ within ${\rm Im}(R)$ is the image under $R$ of the orthogonal complement of ${\rm Im}(R)\cap K$:
\begin{equation}\label{E:Rorth2}
{\rm Im}(R)\ominus\left( {\rm Im}(R)\cap K\right)=R\left([{\rm Im}(R)\cap K]^\perp\right).
\end{equation}
 \end{prop}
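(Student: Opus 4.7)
The plan is to treat the two identities separately, using only that $R$ is self-adjoint and that $Rv=v$ for $v\in\mathrm{Im}(R)$.

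For (\ref{E:Rorth}), I would prove the two inclusions by a direct inner-product computation. If $v\in\mathrm{Im}(R)\cap K^\perp$, then for every $k\in K$,
\[
\langle v, Rk\rangle = \langle Rv, k\rangle = \langle v, k\rangle = 0,
\]
so $v\perp R(K)$; this uses $R=R^*$ and $Rv=v$. Conversely, if $v\in\mathrm{Im}(R)\cap[R(K)]^\perp$, the same chain read backwards gives $\langle v,k\rangle=\langle v,Rk\rangle=0$ for all $k\in K$, so $v\in K^\perp$. This settles (\ref{E:Rorth}).

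For (\ref{E:Rorth2}), write $M=\mathrm{Im}(R)$ and $N=M\cap K$. Since $N\subseteq M$, the definition $M\ominus N=M\cap(M\cap N)^\perp$ reduces to $M\ominus N=M\cap N^\perp$. I would then verify $M\cap N^\perp=R(N^\perp)$ by two inclusions. If $v\in M\cap N^\perp$, then $v=Rv\in R(N^\perp)$ trivially. Conversely, for $w\in N^\perp$ one has $Rw\in M$ automatically, and for each $n\in N\subseteq M$ the identity
\[
\langle Rw,n\rangle = \langle w,Rn\rangle = \langle w,n\rangle = 0
\]
(using $Rn=n$ since $n\in M$) shows $Rw\in N^\perp$. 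Hence $R(N^\perp)\subseteq M\cap N^\perp$.

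There is no serious obstacle: both identities follow from $R^{*}=R$ together with the projection property $R|_{M}=\mathrm{id}_{M}$. The only point that deserves a line of comment is the reduction of $M\ominus N$ to $M\cap N^\perp$ using $N\subseteq M$, which makes the notation $\ominus$ collapse to ordinary intersection with the orthogonal complement in the ambient Hilbert space.
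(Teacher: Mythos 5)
Your proposal is correct and follows essentially the same route as the paper: both identities are obtained from $R=R^*$ together with $Rv=v$ for $v\in\mathrm{Im}(R)$, via the same inner-product computations $\langle v,Rk\rangle=\langle Rv,k\rangle$ and $\langle Rw,n\rangle=\langle w,Rn\rangle=\langle w,n\rangle$. Your write-up of the second identity is in fact slightly more explicit than the paper's, which only records the equivalence $v\perp(\mathrm{Im}(R)\cap K)\iff Rv\perp(\mathrm{Im}(R)\cap K)$ and leaves the resulting set equality implicit.
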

 
 Take $R=P_{Z_N}$, the orthogonal projection onto the closed subspace $Z_N$, and $K=\ker Q$, where $Q$ is any continuous linear mapping on $H$, we have 
 \begin{equation}
 Z_N\cap (\ker Q)^\perp=Z_N\cap [P_{Z_N}(\ker Q)]^\perp
 \end{equation}
 and
  \begin{equation}
 P_{Z_N}\left([\ker Q_N]^\perp\right)= Z_N\ominus \bigl(\ker Q_N\bigr),
 \end{equation}
 where $Q_N=Q|Z_N$.
 
\begin{proof}
If $v\in {\rm Im}(R)\cap K^\perp$ then $Rv=v$  and for any $z\in K$ we have
$$\la v, R(z)\ra =\la Rv, z\ra=\la v, z\ra =0,$$
and so the vector $v$ in ${\rm Im}(R)$ is orthogonal to $R(K)$. Conversely, if $v\in {\rm Im}(R)$ is orthogonal to $R(K)$ then for any $z\in K$ we have
$$\la v, z\ra =\la Rv, z\ra=\la v, Rz\ra =0,$$
which shows that $v$ is orthogonal to $K$.  This establishes the equality (\ref{E:Rorth}). 

The equality
\begin{equation}\label{E:Rvz}
\la v, Rz\ra=\la v, R^2z\ra= \la Rv, Rz\ra 
\end{equation}
holds for all $v, z\in H$.  If $Rv$ is orthogonal to ${\rm Im}(R)\cap K$ then $\la Rv, Rz\ra=0$ for all $Rz\in K$  and so, by  (\ref{E:Rvz}), $v$ is orthogonal to ${\rm Im}(R)\cap K$.  Conversely, if $v$ is orthogonal to ${\rm Im}(R)\cap K$ then the first term in   (\ref{E:Rvz}) is $0$ whenever $Rz\in K$, and so by   (\ref{E:Rvz}) it follows that $Rv$ is orthogonal to ${\rm Im}(R)\cap K$.
\end{proof}

\subsection{Limits of projections}

 Much of our work takes places in a Hilbert space $H$ equipped with an increasing sequence of  closed or finite-dimensional subspace $Z_1\subset Z_2\subset \ldots$ whose union is dense in $H$.

\begin{lemma}\label{L:incsub} Let $Z_1\subset Z_2\subset \ldots$ be a sequence of closed subspaces of a Hilbert space. Then the following are equivalent:
\begin{itemize}
\item[(i)] $\cup_{N\geq 1}Z_N$ is dense in $H$;
\item[(ii)] $\lim_{N\to\infty}P_{Z_N}z=z$ for all $z\in H$, where $P_{Z_N}$ is the orthogonal projection onto $Z_N$.
\end{itemize}
\end{lemma}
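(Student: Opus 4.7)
The plan is to prove the two implications directly, using the basic minimization property of orthogonal projection, namely that $P_{Z_N}z$ is the point of $Z_N$ closest to $z$, so $\norm{z - P_{Z_N}z} \leq \norm{z - y}$ for every $y \in Z_N$.

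For the implication (i) $\Rightarrow$ (ii), I would fix $z \in H$ and $\epsilon > 0$. By density of $\cup_{N\geq 1} Z_N$, there exist an index $N_0$ and a vector $y \in Z_{N_0}$ with $\norm{z-y} < \epsilon$. Since the subspaces are nested, $y \in Z_N$ for every $N \geq N_0$, and hence the minimization property gives $\norm{z - P_{Z_N}z} \leq \norm{z-y} < \epsilon$ for all such $N$. This is exactly the statement that $P_{Z_N}z \to z$.

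For (ii) $\Rightarrow$ (i), I would simply note that $P_{Z_N}z \in Z_N \subset \cup_{M\geq 1} Z_M$ for each $N$, so the convergence $P_{Z_N}z \to z$ exhibits $z$ as a limit of elements from $\cup_{M\geq 1}Z_M$. Since $z \in H$ was arbitrary, this union is dense in $H$.

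There is really no obstacle here; the argument is essentially bookkeeping around the closest-point characterization of $P_{Z_N}$ and the fact that the sequence of subspaces is increasing, which ensures that once a vector belongs to some $Z_{N_0}$ it also belongs to every subsequent $Z_N$. The only small point worth flagging is that closedness of each $Z_N$ is what makes the orthogonal projection $P_{Z_N}$ well defined in the first place, but nothing more subtle is needed.
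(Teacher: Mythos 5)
Your proposal is correct and follows essentially the same route as the paper: both implications rest on the closest-point characterization of $P_{Z_N}$ together with the nesting of the subspaces for (i) $\Rightarrow$ (ii), and on the trivial observation that $P_{Z_N}z$ lies in the union for (ii) $\Rightarrow$ (i). No gaps.
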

\begin{proof} Suppose (i) holds. Let $z\in H$. Then there is a sequence of points $w_n\in \cup_{N\geq 1}Z_N$ converging to $z$. For each $k$ there is an integer $N_k$ such that $w_k\in Z_{N_k}$.  Then, bearing in mind that $P_{Z_{N_k}}z$ is the point on $Z_{N_k}$ closest to $z$, we have
$$\norm{z-P_{Z_{N_k}}z}\leq \norm{z-w_k}\to 0\quad\hbox{as $k\to\infty$.}$$
So for any $\epsilon>0$ there is an integer $k$ such that 
$$\norm{z-P_{Z_{N_k}}z}<\epsilon.$$
For $N>N_k$ the subspace $Z_{N_k}$ is contained in $Z_N$ and so $P_{Z_N}z$ being the point on $Z_N$ closest to $z$, we have
$$\norm{z-P_{Z_N}z}<\epsilon$$
for all $N>N_k$. Thus (ii) holds.

The implication (ii) $\implies$ (i) holds since all the points $P_{Z_N}z$ lie in the union $\cup_{N\geq 1}Z_N$.
\end{proof}
 
The following result was needed in proving Proposition \ref{P:L0N} and is thus a crucial result for our purposes.
 
 \begin{theorem}\label{T:limQN}  Suppose $H$ is a Hilbert space,  and $Z_1\subset Z_2\subset \ldots$ is a sequence of finite dimensional subspaces whose union is dense in $H$. Let   $L_0$ be a closed subspace of $H$ of finite codimension.  Then 
\begin{equation}\label{E:limQN1}
\lim_{N\to\infty}P_{ L_0\cap Z_N}z=P_{L_0}z.
\end{equation}
for all $z\in H$. In particular, if $L_0=\ker Q$, for some continuous linear mapping $Q:H\to W$ onto a finite-dimensional space, then
\begin{equation}\label{E:limQN}
\lim_{N\to\infty}P_{ \ker Q_N }z=P_{\ker Q}z \qquad\hbox{for all $v\in H$,}
\end{equation}
where $Q_N=Q|Z_N$.
\end{theorem}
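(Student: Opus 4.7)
\medskip

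\textbf{Proof plan for Theorem \ref{T:limQN}.} The plan is to reduce the statement to an explicit finite-dimensional computation by using that $L_0$ has finite codimension. Let $m$ be the codimension of $L_0$ and pick an orthonormal basis $u_1,\ldots,u_m$ of $L_0^\perp$, so that
\[
L_0=\{v\in H:\langle v,u_i\rangle=0,\ i=1,\ldots,m\}\quad\text{and}\quad P_{L_0}z=z-\sum_{i=1}^m\langle z,u_i\rangle\,u_i.
\]
Set $u_{i,N}=P_{Z_N}u_i$. Since $v\in Z_N$ satisfies $\langle v,u_i\rangle=\langle v,u_{i,N}\rangle$, the intersection $L_0\cap Z_N$ is the orthogonal complement within $Z_N$ of $W_N=\mathrm{span}\{u_{1,N},\ldots,u_{m,N}\}$. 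By Lemma \ref{L:incsub}, $u_{i,N}\to u_i$ in $H$, and a standard argument (the linear independence result Lemma \ref{L:indep} invoked in the proof of Theorem \ref{T:mainAWS}) gives that $u_{1,N},\ldots,u_{m,N}$ are linearly independent for all sufficiently large $N$.

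Next, since $L_0\cap Z_N\subset Z_N$, I factor
\[
P_{L_0\cap Z_N}z=P_{Z_N}z-P_{W_N}(P_{Z_N}z),
\]
and observe that $P_{Z_N}z\to z$ by Lemma \ref{L:incsub}. It remains to show $P_{W_N}(P_{Z_N}z)\to\sum_{i=1}^m\langle z,u_i\rangle u_i$. Writing $P_{W_N}(P_{Z_N}z)=\sum_{i=1}^m c_i^{(N)}u_{i,N}$, the coefficients are determined by the linear system $G_N c^{(N)}=a^{(N)}$, where $G_N=[\langle u_{i,N},u_{j,N}\rangle]$ is the Gram matrix and $a^{(N)}=(\langle P_{Z_N}z,u_{j,N}\rangle)_j$. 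Using $u_{i,N}\to u_i$ and orthonormality of $\{u_i\}$, we get $G_N\to I_m$; using $\langle P_{Z_N}z,u_{j,N}\rangle=\langle z,u_{j,N}\rangle\to\langle z,u_j\rangle$, we get $a^{(N)}\to(\langle z,u_j\rangle)_j$. Continuity of matrix inversion on the open set of invertible matrices then yields $c^{(N)}\to(\langle z,u_j\rangle)_j$, and combining with $u_{i,N}\to u_i$ gives the desired limit for $P_{W_N}(P_{Z_N}z)$.

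Putting these pieces together,
\[
P_{L_0\cap Z_N}z=P_{Z_N}z-P_{W_N}(P_{Z_N}z)\ \longrightarrow\ z-\sum_{i=1}^m\langle z,u_i\rangle u_i=P_{L_0}z,
\]
which is (\ref{E:limQN1}). The particular case (\ref{E:limQN}) follows by setting $L_0=\ker Q$, which has codimension equal to $\dim W<\infty$, and noting that $\ker Q_N=Z_N\cap\ker Q=L_0\cap Z_N$.

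The only delicate step is the uniform-in-$N$ control of $P_{W_N}$: everything hinges on invertibility and convergence of the Gram matrix $G_N$, for which linear independence of the $u_{i,N}$ for large $N$ is essential. Once this is in hand, the rest is routine continuity and projection bookkeeping.
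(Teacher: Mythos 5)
Your proposal is correct and follows essentially the same route as the paper: the paper also identifies $L_0\cap Z_N$ as the orthogonal complement in $Z_N$ of the span of the $P_{Z_N}u_i$, writes $P_{L_0\cap Z_N}z=P_{Z_N}z-P_{Z_N\ominus L_0}z$, and proves convergence of the projections onto $\mathrm{span}\{P_{Z_N}u_1,\ldots,P_{Z_N}u_m\}$ via exactly your Gram-matrix computation (isolated there as Lemma \ref{L:limproj}, with linear independence for large $N$ supplied by Lemma \ref{L:indep}). The only difference is organizational: you inline the Gram-matrix step rather than quoting it as a separate lemma.
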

Let us note that (\ref{E:limQN1}) implies that any element $z\in L_0$ is the limit of a sequence of elements $P_{L_0\cap Z_N}z\in L_0\cap Z_N$; thus
\begin{equation}\label{E:L0ZN}
\hbox{\em $\cup_{N\geq 1}(L_0\cap Z_N)$ is dense in $L_0$.}
\end{equation}
This conclusion requires that $L_0$ be of finite codimension; otherwise, we could just choose $L_0$ to be the line through any non-zero vector outside $\cup_{N\geq 1}Z_N$ and obtain a contradiction.

\begin{proof} Let $u_1,\ldots, u_m$ be an orthonormal basis of $L_0^\perp$.  Then for any $v\in Z_N$ we have
$$\la P_{Z_N}u_a, v\ra=\la u_a, P_{Z_N}v\ra =\la u_a, v\ra.$$
Thus if $v\in Z_N$ is orthogonal to  $[P_{Z_N}u_1,\ldots, P_{Z_N}u_m]$  then $v\in [u_1,\ldots, u_m]^\perp$ and so $v\in L_0$.  Conversely, if $v\in Z_N$ is also in $L_0$ then $v$ is orthogonal to each $u_a$ and hence to each $P_{Z_N}u_a$. Thus the orthogonal complement of $[P_{Z_N}u_1,\ldots, P_{Z_N}u_m]$ in $Z_N$ is   $L_0\cap Z_N$:
\begin{equation}
Z_N\ominus P_{Z_N}(L_0^\perp)=Z_N\cap L_0.
\end{equation}
Consequently,
\begin{equation}\label{E:ZNL0p}
Z_N\ominus L_0=P_{Z_N}(L_0^\perp).
\end{equation}

By Lemma \ref{L:limproj} (proven below) the orthogonal projection   in $H$ onto $P_{Z_N}(L_0^\perp)$  converges pointwise, as $N\to\infty$, to the orthogonal projection onto  $L_0^\perp$. Thus, using (\ref{E:ZNL0p}),
\begin{equation}\label{E:limZN}
\lim_{N\to\infty} P_{Z_N\ominus L_0}z = P_{L_0^\perp}z \qquad\hbox{for all $z\in H$.}
\end{equation}
 Since $H$ is the sum of the mutually orthogonal subspaces $Z_N\ominus L_0$, $Z_N\cap L_0$, and $Z_N^\perp$, we have
$$z= P_{Z_N\ominus L_0}z+ P_{Z_N\cap L_0}z+P_{Z_N^\perp}z,$$
and so 
\begin{equation}
 P_{Z_N\cap L_0}z= z-P_{Z_N^\perp}z- P_{Z_N\ominus L_0}z=  P_{Z_N}z - P_{Z_N\ominus L_0}z,
 \end{equation}
for all $z\in H$. Then
\begin{equation}\label{E:limZN2}
\begin{split}
\lim_{N\to\infty} P_{  L_0\cap Z_N}z &= \lim_{N\to\infty} (P_{Z_N}z-P_{  Z_N\ominus L_0}z) \\
&=z- P_{L_0^\perp}z\\
&= P_{L_0}z \\
\end{split}
\end{equation}\end{proof}

We prove the lemma used in the preceding proof.
 
\begin{lemma}\label{L:limproj}
Let $H$ be a Hilbert space and $K$   a finite-dimensional subspace  of $H$.  Suppose that $R_1, R_2, \ldots$ are   orthogonal projections in $H$ such that
$$R_Nz\to z\qquad\hbox{for all $z\in H$, as $N\to\infty$.}$$
Let $S_N$  be the orthogonal projection in $H$ onto $R_N(K)$. Then $S_N$ converges pointwise to the orthogonal projection onto $K$:
$$\lim_{N\to\infty}S_Nz=Sz\qquad\hbox{for all $z\in H$},$$
where $S$ is the orthogonal projection onto $K$.
\end{lemma}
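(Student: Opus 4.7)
My plan is to exhibit a controlled basis of each image $R_N(K)$ that converges to an orthonormal basis of $K$, and then write $S_N z$ explicitly in that basis.

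Fix an orthonormal basis $e_1,\ldots,e_k$ of $K$. The first step is to show that for $N$ large, the vectors $R_Ne_1,\ldots,R_Ne_k$ form a basis of $R_N(K)$. Because $R_N$ is an orthogonal projection, $\la R_Ne_i, R_Ne_j\ra = \la R_Ne_i, e_j\ra$, and pointwise convergence $R_Ne_i\to e_i$ gives $\la R_Ne_i, e_j\ra \to \la e_i,e_j\ra = \delta_{ij}$. So the Gram matrix $G_N = [\la R_Ne_i, R_Ne_j\ra]$ converges to the $k\times k$ identity. In particular $G_N$ is invertible for large $N$, so $R_Ne_1,\ldots,R_Ne_k$ are linearly independent, hence span the $k$-dimensional image $R_N(K)$.

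Next, I would compute $S_Nz$ for arbitrary $z\in H$. Since $S_Nz\in R_N(K)$, write $S_Nz = \sum_{j=1}^k c_j(N,z)\,R_Ne_j$. The orthogonality characterization $\la z - S_Nz, R_Ne_i\ra = 0$ for $i=1,\ldots,k$ yields the linear system $G_N\, c(N,z) = b(N,z)$, where $b(N,z)_i = \la z, R_Ne_i\ra$. As $N\to\infty$, the right-hand side $b(N,z)_i = \la R_Nz, e_i\ra$ tends to $\la z, e_i\ra$, and $G_N^{-1}\to I$ by continuity of matrix inversion on the open set of invertible matrices. Therefore $c_j(N,z)\to \la z, e_j\ra$ for each $j$.

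Finally, combine these convergences:
\begin{equation*}
S_Nz \;=\; \sum_{j=1}^k c_j(N,z)\,R_Ne_j \;\longrightarrow\; \sum_{j=1}^k \la z, e_j\ra\, e_j \;=\; Sz,
\end{equation*}
because both factors converge and the sum is finite. There is really no main obstacle; the only subtle point is confirming that $R_N(K)$ has dimension exactly $k$ for large $N$ so that the representation of $S_Nz$ in the basis $\{R_Ne_j\}$ is well-defined, which is handled by the invertibility of $G_N$ noted above. The finite-dimensionality of $K$ is essential throughout, since it allows us to work with a fixed finite basis and use only pointwise (not uniform) convergence of $R_N$.
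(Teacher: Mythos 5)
Your proposal is correct and follows essentially the same route as the paper: express $S_Nz$ in the basis $R_Ne_1,\ldots,R_Ne_k$ of $R_N(K)$, solve the resulting Gram system, and pass to the limit using continuity of matrix inversion. The only cosmetic difference is that you deduce linear independence of the $R_Ne_j$ from the convergence of the Gram matrix to the identity, whereas the paper invokes a separate perturbation lemma for this point.
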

In this result the hypothesis that $K$ is finite-dimensional is needed. For, consider $H=l^2$, $R_N$ the orthogonal projection given by $R(x_1, x_2,\ldots)=(x_1,\ldots, x_N, 0,0,\ldots)$, and $K=v^\perp$, where $v$ is the vector $(1,1/2,1/3,\ldots)$. Then $R_N(K)=\mbr^N\times\{(0,0,\ldots)\}$, and $S_N=R_N$. The pointwise limit of $S_N$ is $I$, which is not the same as the orthogonal projection onto $K$.

\begin{proof} Let $u_1,\ldots, u_m$ be an  orthonormal basis of $K=S(H)$ (if $S=0$ the result is obvious). By Lemma \ref{L:indep} (below) we may assume that $N$ is large enough that the vectors $R_Nu_1,\ldots, R_Nu_m$ are linearly independent, and thus form a basis of $R_N(K)$.  Since $S_N$ is the orthogonal projection onto this subspace, for any $z\in H$ the vector $S_Nz$  can be expressed in terms of $R_Nu_1,\ldots, R_Nu_m$ as follows:
\begin{equation}\label{E:SNz}
S_Nz=\sum_{i=1}^m c_i R_Nu_i.
\end{equation}
The coefficients $c_i$ depend on $N$. 
Since $S_Nz$ is the orthogonal projection of $z$ on the span of $\{R_Nu_1,\ldots, R_Nu_m\}$, the inner product of $S_Nz$ with each $R_Nu_j$ is the same as the inner product of $z$ with $R_Nu_j$:
\begin{equation}\label{E:zRNu}
\la z, R_Nu_j\ra =\la S_Nz, R_Nu_j\ra =\sum_{i=1}^m \la R_Nu_j, R_Nu_i\ra c_i.
\end{equation}
Thus the vector $c(N)$ of coefficients $c_j$ is
\begin{equation}\label{E:cN}
c(N)\stackrel{\rm def}{=} \left[\begin{matrix} c_1 \\ \vdots \\ c_m\end{matrix}\right]
=[\la R_Nu_a, R_Nu_b\ra]^{-1}\left[\begin{matrix} \la z, R_Nu_1\ra \\ \vdots \\ \la z, R_Nu_m\ra\end{matrix}\right].
\end{equation}
Letting $N\to\infty$ we obtain (using continuity of matrix inversion):
\begin{equation}\label{E:cinf}
\lim_{N\to\infty}c(N)= [\la u_a, u_b\ra]^{-1}\left[\begin{matrix} \la z,u_1\ra \\ \vdots \\ \la z, u_m\ra\end{matrix}\right]= \left[\begin{matrix} \la z,u_1\ra \\ \vdots \\ \la z, u_m\ra\end{matrix}\right].
\end{equation}
Going back to (\ref{E:SNz}) we conclude that
\begin{equation}\label{E:limSNz}
\lim_{N\to\infty}S_Nz = \sum_{i=1}^m\la z,u_i\ra u_i,
\end{equation}
and this is just the orthogonal projection of $z$ onto the subspace   $S(H)$ spanned by $u_1,\ldots, u_m$.  \end{proof}

We also make the following observation, used in the proof of the preceding Lemma.

 \begin{lemma}\label{L:indep}
Suppose $u_1,\ldots, u_m$ are linearly independent in a Hilbert space $H$ and $Z_1\subset Z_2\subset\ldots$ are closed subspaces of $H$ whose union is dense in $H$.  Let $P_{Z_N}$ be the orthogonal projection onto $Z_N$. Then, for $N$ large enough, $P_{Z_N}u_1,\ldots, P_{Z_N}u_m$ are linearly independent vectors in $Z_N$.
\end{lemma}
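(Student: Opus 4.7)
The plan is to reduce the linear independence question to the non-vanishing of a Gram determinant, and use the density hypothesis together with Lemma \ref{L:incsub} to pass to the limit. Recall that a finite collection of vectors $v_1,\ldots,v_m$ in a Hilbert space is linearly independent if and only if the Gram matrix $[\langle v_a, v_b\rangle]_{a,b=1}^m$ has nonzero determinant.

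For each $N$, form the Gram matrix
\begin{equation*}
G_N = [\la P_{Z_N}u_a, P_{Z_N}u_b\ra]_{a,b=1}^m,
\end{equation*}
so that $P_{Z_N}u_1,\ldots,P_{Z_N}u_m$ are linearly independent if and only if $\det G_N\neq 0$. By hypothesis $\cup_{N\geq 1}Z_N$ is dense in $H$, so Lemma \ref{L:incsub} gives $P_{Z_N}u_i\to u_i$ as $N\to\infty$ for each $i\in\{1,\ldots,m\}$. By continuity of the inner product (in both arguments, using that $\|P_{Z_N}u_a\|\leq \|u_a\|$ is bounded), each entry $\la P_{Z_N}u_a, P_{Z_N}u_b\ra$ converges to $\la u_a, u_b\ra$, and hence
\begin{equation*}
\lim_{N\to\infty} G_N = G, \qquad G=[\la u_a, u_b\ra]_{a,b=1}^m.
\end{equation*}

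Since $u_1,\ldots,u_m$ are linearly independent in $H$, their Gram matrix $G$ is positive definite, so $\det G>0$. The determinant is a continuous function of the matrix entries, hence $\det G_N \to \det G > 0$, and so $\det G_N>0$ (in particular nonzero) for all $N$ sufficiently large. For such $N$ the projected vectors $P_{Z_N}u_1,\ldots,P_{Z_N}u_m$ are linearly independent, which is the desired conclusion. There is no real obstacle here; the argument is a direct combination of the convergence statement in Lemma \ref{L:incsub} with the standard Gram-determinant criterion and continuity of $\det$.
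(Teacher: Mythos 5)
Your proof is correct. It differs from the paper's argument in the mechanism used to show that linear independence survives small perturbations. The paper reduces the lemma to the statement that if $u'_a$ is sufficiently close to $u_a$ for each $a$, then $u'_1,\ldots,u'_m$ inherit linear independence; this is proved by a compactness argument, taking $C>0$ to be the infimum of $\norm{\sum_a r_a u_a}$ over the unit diamond $|r_1|+\cdots+|r_m|=1$ and showing that any relation $\sum_a\lambda_a u'_a=0$ with $\max_a\norm{u'_a-u_a}<C/2$ forces $\lambda=0$. That argument is stated for an arbitrary normed linear space and yields an explicit quantitative threshold. You instead exploit the inner-product structure directly: the Gram matrix $G_N$ of the projected vectors converges entrywise to the Gram matrix $G$ of the $u_a$ (your justification of the convergence of the inner products, via Lemma \ref{L:incsub} and boundedness of $\norm{P_{Z_N}u_a}$, is sound), $G$ is positive definite because the $u_a$ are independent, and continuity of $\det$ gives $\det G_N>0$ for large $N$. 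Both routes rest on the same pointwise convergence $P_{Z_N}u_a\to u_a$ from Lemma \ref{L:incsub}; the paper's version buys generality (it applies verbatim in a Banach space, which is in the spirit of Section \ref{s:aws}), while yours is shorter in the Hilbert setting and avoids the compactness step. Interestingly, the paper itself remarks that in finite dimensions one could argue "by expressing linear independence of vectors as a determinant being non-zero" — your observation is essentially that the Gram determinant makes this determinant criterion available in infinite dimensions as well, since the Gram matrix is always $m\times m$.
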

We present the proof as a broader argument below. 

If $u_1,\ldots, u_m$  are linearly independent in a  finite dimensional vector space $E$ then $u'_1,\ldots, u'_m$ are also linearly independent when $u'_a$ is close enough to $u_a$ for each $a$; this follows, for example, by expressing linear independence of vectors as a determinant being non-zero. This result also holds if $E$ is any normed linear space. To see this let $C$ be the   infimum of $\norm{\sum_{a=1}^mr_au_a}$ with $r=(r_1,\ldots, r_m)$ running over the unit ``diamond'' (compact) in $\mbr^m$, which consists of all $r$ for which
$$|r_1|+\ldots +|r_m|=1.$$
Then $C$ is greater than zero because the infimum is actually realized at some point $r^* $   and the vectors $u_1,\ldots, u_m$ are linearly independent. Now let $u'_1,\ldots, u'_m\in E$ be such that 
$$\max_{a}\norm{u'_a-u_a}<C/2.$$
 If $\sum_{a=1}^m\lambda_au'_a=0$ then
\begin{equation}
\begin{split}
\norm{\sum_a\lambda_au_a} &=\norm{\sum_a\lambda_a(u_a-u'_a)} \\
&\leq \frac{C}{2}\sum_a|\lambda_a|.\end{split}
\end{equation}
On the other hand we have $\norm{\sum_a\lambda_au_a} \geq C\sum_a|\lambda_a|$, and so $\sum_a|\lambda_a|$ must be $0$, which means that each $\lambda_a$ is $0$.

\begin{prop}\label{P:limHZN2}
Let $H$ be a   Hilbert space and $Z_1\subset Z_2\subset \ldots$ a sequence of closed subspaces such that  $\cup_{N\geq 1}Z_N$ is dense in $H$. Suppose
$$R:H\to Y$$
is a continuous linear surjection onto a finite-dimensional vector space $Y$. Then $R|Z_N$ is surjective onto $Y$ for large $N$.
\end{prop}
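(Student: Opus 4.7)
The plan is to reduce surjectivity of $R|Z_N$ onto the finite-dimensional space $Y$ to a perturbative linear-independence argument, using the two ingredients already assembled in the paper: pointwise convergence $P_{Z_N}\to I$ (Lemma \ref{L:incsub}) and stability of linear independence under small perturbations (the discussion following Lemma \ref{L:indep}).

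First, I would pick a basis $y_1,\ldots,y_m$ of $Y$. Since $R:H\to Y$ is surjective, there exist $h_1,\ldots,h_m\in H$ with $Rh_i=y_i$ for each $i$. Because $\cup_{N\ge 1}Z_N$ is dense in $H$, Lemma \ref{L:incsub} gives $P_{Z_N}h_i\to h_i$ as $N\to\infty$. Continuity of $R$ then yields
\begin{equation*}
R(P_{Z_N}h_i)\longrightarrow Rh_i=y_i\qquad\text{in }Y,\quad\text{for each }i=1,\ldots,m.
\end{equation*}

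Next, since $y_1,\ldots,y_m$ are linearly independent in the finite-dimensional normed space $Y$, the stability argument recorded just after Lemma \ref{L:indep} applies: any $m$-tuple of vectors sufficiently close to $(y_1,\ldots,y_m)$ is also linearly independent. Hence for all $N$ large enough, the vectors $R(P_{Z_N}h_1),\ldots,R(P_{Z_N}h_m)$ are linearly independent in $Y$. Being $m$ linearly independent vectors in an $m$-dimensional space, they span $Y$. But each $R(P_{Z_N}h_i)$ lies in $R(Z_N)$, since $P_{Z_N}h_i\in Z_N$. Therefore $R(Z_N)=Y$ for all sufficiently large $N$, which is exactly the claim that $R|Z_N$ is surjective for large $N$.

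There is no real obstacle here; the only point that requires a little care is making sure the stability of linear independence is being invoked in the correct normed space, namely in the finite-dimensional target $Y$ (rather than in $H$), so that the ambient norm is automatically well-behaved. Everything else is pushforward of a convergence statement through the continuous linear map $R$.
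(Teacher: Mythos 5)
Your proof is correct, but it follows a genuinely different route from the paper's. You argue forward: lift a basis $y_1,\ldots,y_m$ of $Y$ to preimages $h_i\in H$, push the projections $P_{Z_N}h_i\in Z_N$ through $R$, and invoke the stability of linear independence under small perturbations (which the paper does record, for an arbitrary normed space, in the discussion following Lemma \ref{L:indep}) to conclude that $R(Z_N)$ contains $m$ independent vectors for large $N$. The paper instead argues by duality: it first notes that the increasing chain $R(Z_1)\subset R(Z_2)\subset\cdots$ stabilizes at some $R(Z_{N_0})$, equips $Y$ with an inner product, and shows that any $y^*$ orthogonal to $R(Z_{N_0})$ satisfies $P_{Z_N}(R^*y^*)=0$ for all $N\geq N_0$, whence $R^*y^*=\lim_N P_{Z_N}(R^*y^*)=0$ and $y^*=0$ by injectivity of $R^*$. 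Both arguments rest on the same convergence $P_{Z_N}\to I$ from Lemma \ref{L:incsub}. Your version is more constructive and avoids adjoints and the choice of an inner product on $Y$, at the cost of importing the perturbation-of-independence lemma; the paper's version avoids that lemma but requires the adjoint identity $J_N^*R^*=(RJ_N)^*$ and the stabilization observation. Your one stated point of care --- that the perturbation argument must be run in the target $Y$, not in $H$ --- is exactly right, since the vectors being perturbed are $R(P_{Z_N}h_i)\to y_i$ in $Y$.
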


\begin{proof} Since $R(Z_1)\subset R(Z_2)\subset\ldots$ is an increasing sequences of subspaces of the finite-dimensional space $Y$, the subspaces stabilize, in the sense that  $R(Z_{N_0})=R(Z_N)$ for all $N\geq N_0$, where $N_0$ is such that
$$\dim R(Z_{N_0})=\max_{N\geq 1}\dim R(Z_N).$$
   Let $y^*\in Y$ be orthogonal to $R(Z_{N_0})$; here we have equipped $Y$ with an arbitrary inner product. Let $J_N:Z_N\to H$ be the inclusion map. Then $J_N^*:H\to Z_N$ is the orthogonal projection onto the subspace $Z_N$.  Since, for all $N\geq N_0$, the vector $y^*\in Y$ is orthogonal to $R(Z_N)={\rm Im}(RJ_N)$ then $y^*\in \ker (RJ_N)^*=[{\rm Im}(RJ_N)]^\perp$. Thus,
    $$J_N^*R^*y^*=0.$$
    This means
     $$P_{Z_N}(R^*y^*)=0,$$
      since $J_N^*z=P_{Z_N}z$ for all $z\in H$ and all $N\geq N_0$. Then, using Lemma \ref{L:incsub},
$$R^*y^*=\lim_{N\to\infty}P_{Z_N}(R^*y^*)=0,$$
and so $y^*\in\ker R^*={\rm Im}(R)^\perp=0$.    Thus $R(Z_{N_0})=Y$, and so $R|Z_N$ is surjective for all $N\geq N_0$.\end{proof}

We apply Theorem \ref{T:limQN}  and  Proposition \ref{P:limHZN2} to obtain the following result.

\begin{theorem}\label{T:limHZN3}
Let $H$ be a   Hilbert space, and   $Z_1\subset Z_2\subset \ldots$ be a sequence of finite dimensional subspaces such that  $\cup_{N\geq 1}Z_N$ is dense in $H$.   Let ${\cl}:H\to X$ and $Q:H\to W$ be continuous linear surjections onto finite-dimensional Hilbert spaces $X$ and $W$, such that 
$$\ker {\cl}+\ker Q=H.$$
  Let ${\cl}_N={\cl}|Z_N$  and $Q_N=Q|Z_N$. Then 
  \begin{equation}\label{E:ZNkerPQN1}
Q_N(\ker {\cl}_N)=W\qquad\hbox{for large $N$,}
\end{equation}
and
  \begin{equation}\label{E:PNQN}
  \ker {\cl}_N+ \ker Q_N =Z_N\quad\hbox{ for large $N$.}
  \end{equation}
\end{theorem}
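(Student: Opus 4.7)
The plan is to deduce both conclusions by applying Proposition \ref{P:limHZN2} inside the closed subspace $\ker\cl\subset H$, using the induced increasing sequence $\ker\cl\cap Z_1\subset \ker\cl\cap Z_2\subset\cdots$. The hypothesis $\ker\cl+\ker Q=H$ translates, via Lemma \ref{L:transverse}, into the statement that the restriction $Q|\ker\cl\to W$ is a continuous linear surjection, which is exactly the map to which I want to apply Proposition \ref{P:limHZN2}.

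The key preliminary step is to verify that $\cup_{N\geq 1}(\ker\cl\cap Z_N)$ is dense in $\ker\cl$. This is where I would invoke Theorem \ref{T:limQN} (equivalently the consequence stated in (\ref{E:L0ZN})), taking $L_0=\ker\cl$: this is a closed subspace of $H$ of finite codimension because $X$ is finite-dimensional and $\cl$ is surjective. Theorem \ref{T:limQN} then yields $P_{\ker\cl\cap Z_N}z\to P_{\ker\cl}z$ for every $z\in H$, and in particular $z=\lim_N P_{\ker\cl\cap Z_N}z$ for every $z\in\ker\cl$, giving the required density in $\ker\cl$.

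With this density in hand, Proposition \ref{P:limHZN2} applies to the Hilbert space $\ker\cl$, to its finite-dimensional filtration $\ker\cl\cap Z_N$, and to the surjection $Q|\ker\cl\to W$. The conclusion is that $(Q|\ker\cl)$ restricted to $\ker\cl\cap Z_N$ is surjective onto $W$ for all sufficiently large $N$. Since $\ker\cl\cap Z_N=\ker\cl_N$ and $Q|\ker\cl_N=Q_N|\ker\cl_N$, this is precisely (\ref{E:ZNkerPQN1}). To derive (\ref{E:PNQN}), I would then apply Lemma \ref{L:transverse} inside the finite-dimensional space $Z_N$: conclusion (\ref{E:ZNkerPQN1}) forces $Q_N(Z_N)=W$, so both $\cl_N\colon Z_N\to \cl(Z_N)$ and $Q_N\colon Z_N\to W$ are surjective, and the equivalence of conditions (ii) and (iii) of Lemma \ref{L:transverse} converts $Q_N(\ker\cl_N)=W$ into $\ker\cl_N+\ker Q_N=Z_N$.

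The main obstacle in this argument is the density claim for $\cup_{N}(\ker\cl\cap Z_N)$ inside $\ker\cl$: naively, intersecting a dense increasing union with a closed subspace need not preserve density (the closed subspace could miss the filtration entirely). The finite-codimension hypothesis on $\ker\cl$ is essential, and this is exactly the content of Theorem \ref{T:limQN} together with the remark (\ref{E:L0ZN}). Once that is cited, the rest of the proof is a straightforward two-line combination of Proposition \ref{P:limHZN2} with Lemma \ref{L:transverse}, and there are no additional estimates to carry out.
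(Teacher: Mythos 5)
Your argument is correct and is essentially the paper's own proof: both use the remark (\ref{E:L0ZN}) following Theorem \ref{T:limQN} to get density of $\cup_{N}(\ker\cl\cap Z_N)$ in the finite-codimension subspace $\ker\cl$, then apply Proposition \ref{P:limHZN2} to $Q|\ker\cl$ to obtain (\ref{E:ZNkerPQN1}), and finish with Lemma \ref{L:transverse} inside $Z_N$ to get (\ref{E:PNQN}). Your write-up is, if anything, slightly more explicit than the paper's about why $Q|\ker\cl$ is surjective and why the density step needs the finite-codimension hypothesis.
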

\begin{proof}    
By the observation (\ref{E:L0ZN}) made after Theorem    \ref{T:limQN}, the union of the finite-dimensional subspaces $Z_N\cap\ker {\cl}$  is dense in $\ker {\cl}$.  Considering now the operator $Q|\ker {\cl}$, and applying Proposition \ref{P:limHZN2}, we conclude that $Q|(Z_N\cap \ker {\cl})$ is surjective onto $W$ for $N$ large enough. Thus, since 
\begin{equation}\label{E:ZNkerPPN}
Z_N\cap\ker {\cl}=\ker {\cl}_N,
\end{equation}
we have $Q_N$ maps $\ker {\cl}_N$ onto $W$ for large $N$.
Then by Lemma \ref{L:transverse} it follows that $\ker Q_N+\ker {\cl}_N$ is $Z_N$ for such $N$.
\end{proof}


{\bf{ Acknowledgments.} }   This  research has been supported by     NSA grants   H98230-15-1-0254 and  H98230-16-1-0330.   Our thanks to Irfan Alam for many discussions on the subject. We are thankful to the online TikZ community and to Arthur Parzygnat for ideas for the figures. We are also very thankful to the referee for comments that have improved this paper.

\bibliographystyle{plain}

\end{document}